\theoremstyle{plain}
\newtheorem{THEOREM}{Theorem}[section]
\newtheorem{theorem}[THEOREM]{Theorem}
\newtheorem{corollary}[THEOREM]{Corollary}
\newtheorem{lemma}[THEOREM]{Lemma}
\newtheorem{assumption}[THEOREM]{Assumption}
\theoremstyle{definition}
\theoremstyle{remark}
\newtheorem{remark}[THEOREM]{Remark}
\def \a {\alpha}
\def \d {\delta}
\def \k {\kappa}
\def \e {\varepsilon}
\def \l {\lambda}
\def \th {\theta}
\def \o {\omega}
\def \w {\omega}
\def \D {\Delta}
\def \L {\Lambda}
\def \O {\Omega}
\def \bz {{\bf z}}
\def \cL {\mathcal{L}}
\newcommand{\R}{\ensuremath{\mathbb{R}}}   
\def \dx  {\, \mbox{d}x}
\def \ds  {\, \mbox{d}s}
\def \ddt  {\frac{\mbox{d\,\,}}{\mbox{d}t}}
\def \Re {\mathrm{Re}}
\def \Im {\mathrm{Im}}
\title{Lyapunov stability and exponential phase-locking of Schr\"odinger-Lohe quantum oscillators}
\author[P. Antonelli]{Paolo Antonelli}
\address{Gran Sasso Science Institute, viale Francesco Crispi, 7, 67100 L'Aquila, Italy}
\email{paolo.antonelli@gssi.it}
\author[D. Reynolds]{David N. Reynolds}
\address{Departamento de Matemática Aplicada and Modeling Nature (MNat) Research Unit. Facultad de Ciencias. Universidad de Granada. 18071-Granada. Spain.}
\email{reyndn12@go.ugr.es}
\date{\today}
\begin{document}

\maketitle
\begin{abstract}
We study the well known Schr\"odinger-Lohe model for quantum synchronization with non-identical natural frequencies. The main results are related to the characterization and convergence to phase-locked states for this quantum system. The results of this article are four-fold. Via a characterization of the fixed points of the system of correlations, we uncover a direct correspondence to the fixed points of the classical Kuramoto model. Depending on the coupling strength, $\k$, relative to natural frequencies, $\O_j$, a Lyapunov function is revealed which drives the system to the phase-locked state exponentially fast. Explicit bounds on the asymptotic configurations are granted via a parametric analysis. Finally, linear stability (instability) of the fixed points is provided via an eigenvalue perturbation argument.

Although the Lyapunov and linear stability are related, their arguments and results are of a different nature. The Lyapunov stability provides a specific value $\k(\O_j)$, where for $\k>\k(\O_j)$, there exists a set of initial data, quantitatively defined, such that the system relaxes to the fixed point at a quantitatively defined exponential rate. While the linear stability is given by analysis of the Jacobian of the fixed points for values of $\k$ large, considering $\varepsilon=\frac{1}{\k}$ as the perturbation parameter. Under certain assumptions this provides stability for a wider range of $\k$ than that which is given in the Lyapunov argument.\\
 
\noindent Keywords: Schr\"odinger-Lohe model, Kuramoto model, quantum synchronization, Lyapunov stability, Linear stability, eigenvalue perturbation
\end{abstract}

\section{Introduction}
 Spontaneous synchronization occurs when any number of individuals act in unison. There are countless examples of this phenomenon. Crickets chirping, fireflies flashing their lights, humans clapping, pacemaker cells in the heart, coupled pendulums, and even the way the human brain functions, can all be modeled via synchronous systems. Given the plethora of examples, it is not surprising that numerous researchers from diverse backgrounds have worked to provide a greater understanding of such systems. From the mathematical perspective, the paradigm model for describing the synchronization of many body systems is the Kuramoto model \cite{KUR}. The standard system tracks the phases, $\th_j$, of $N$ oscillators evolving on the unit sphere, $\mathbb{T}$, according to a competition between each oscillators own natural frequency, $\omega_j$, and a sinusoidal coupling between all other oscillators, modulated by a coupling parameter $K>0$. The model is given by
 \begin{align}\label{e:kur1}
    \ddt \th_j&=\o_j+\frac{K}{N}\sum_{l=1}^N\sin(\th_l-\th_j), \ \ \ j=1,...,N.
\end{align}
The Kuramoto model has received much attention since its inception in 1975. Providing a comprehensive list of references would be prohibitive, but the following articles give the relevant background for this article \cite{ABPRS,DF,MS,Str}. Of particular importance, are the many amenable qualities the Kuramoto model exhibits, which allows for a rigorous mathematical analysis. Indeed, the system is rotationally invariant, and satisfies a gradient flow, which in some regimes can be proved to be convex, granting exponential convergence to the phase-locked state. Further, its simplicity allows for easy implementation for numerical simulations. For many of these reasons, the Kuramoto model has served as a basis for incorporating other features that are often seen in synchronous systems, for instance, Hebbian dynamics \cite{PPS}, frustration parameters \cite{BK}, higher order interactions \cite{MTB}, recent applications to machine learning \cite{NHSNNN}, and adaptation to the quantum setting \cite{DV}.

The natural question that Lohe \cite{LH2} addressed was whether we could observe similar synchronization behavior in higher-dimensional generalizations of the Kuramoto model, where the oscillators can move on a $d-$dimensional sphere. The model therein is the so-called Lohe matrix model,
\begin{align}\label{e:LoheM}
    i(\ddt U_j)U_j^*=H_j+\frac{ik}{2N}\sum_{l=1}^NU_jU_l^*-U_lU_j^*,
\end{align}
with $U_j$ being $d\times d$ unitary matrices, and $H_j$ a Hermitian $d\times d$ matrix whose eigenvalues are the natural frequencies of the j-th oscillator. The model can also be interpreted as a system of Schr\"odinger equations for wave functions attaining only $d$ quantum energy levels. A recent work revealed a fibered gradient flow structure that allows for a similar relaxation to phase-locked states for the Lohe matrix model \eqref{e:LoheM}, as for the Kuramoto model \eqref{e:kur1}, \cite{PP}. Further, an analogous mean field theory was provided in \cite{GH}, putting the standard theory of the Lohe matrix model on a similar level as that of the Kuramoto model, highlighting the suitability of being used as a higher-dimensional generalization. The model which we will largely be concerned with in this article was put forward in \cite{LH1} where the quantum mechanical formulation was formulated in the infinite dimensional case, namely by considering the wave functions to belong to the Hilbert space $L^2(\R^d)$.

The Schr\"odinger-Lohe model may be written as
\begin{align}\label{eq:SL_or}
i\partial_t \psi_j&=H_j\psi_j+\frac{i\k}{2N}\sum_{l=1}^N\left(\psi_l-\frac{\langle \psi_l,\psi_j\rangle}{\|\psi_j\|_2^2} \psi_j\right),\\
\psi_j(x,0)&=\psi_j^0(x), \ \ j=1,\dots,N, \nonumber
\end{align}
with coupling strength $\k>0$, each $\psi_j^0 \in L^2(\R^d)$, and $\langle f,g \rangle=\int_{\R^d} \overline{f(x)}g(x) \dx$ is the standard inner product on $L^2(\R^d)$. Further, each $H_j$ are Hamiltonian operators, that we assume to have the form $H_j=H+\O_j$, where the real constants $\O_j$ play the same role of the natural frequencies in the classical Kuramoto model.

The Schr\"odinger-Lohe model describes a system of $N$ quantum oscillators, all connected to each other, nonlinearly interacting among themselves through unitary transformations. In particular this means that the $L^2$ norm of each wave function is conserved in time (as it can also be checked by direct calculation), despite the presence of the non-Hamiltonian terms in \eqref{eq:SL_or} that drive the system towards synchronization.

Although system \eqref{eq:SL_or} was not derived as a relevant model for specific applications, it is a relatively simple generalization of the Kuramoto system to a quantum context, that serves as a good theoretical starting point to investigate synchronization in quantum systems. 
In fact, the modeling of some quantum circuits suggests that nonlinearities in connecting optical fibers could be described by the nonlinear effects encoded in \eqref{eq:SL_or}, see \cite{LNNTB, Nigg} for related discussions.
Moreover, the study of quantum synchronization models as in \eqref{eq:SL_or} could provide some insights in the study of quantum coherence and entanglement, with applications towards quantum computations, communication and metrology \cite{Kim}.

The Schr\"odinger-Lohe model \eqref{eq:SL_or} and the asymptotic behavior of its solutions was rigorously studied in \cite{AM, HHK}, see also \cite{CCH, HK_frust}. In particular, the case of two oscillators is well understood, as it is possible to show complete frequency synchronization under general assumptions on the initial data. On the other hand, the general system \eqref{eq:SL_or} with $N\geq3$ presents major difficulties so that complete results are available only for identical oscillators, i.e. $\Omega_j=0$, for all $j=1, ..., N$, see also \cite{HHK}. In this case, under some assumptions on the initial data, the system experiences complete phase synchronization. Within \cite{AR} via a coupling of frustration parameters controlled by the Cucker-Smale protocol, the authors provided a result for synchronization of Schr\"odinger-Lohe type oscillators which are nonidentical. However, the result therein utilized the well-known Cucker-Smale theory and does not apply to the original model \eqref{eq:SL_or} with heterogeneous natural frequencies, $\O_j\neq \O_k$, for some $j\neq k$.

In order to bring the basic theory of the Schr\"odinger-Lohe model to the same level of the Kuramoto model and the Lohe matrix model there are two major open problems:
\begin{enumerate}
    \item[OP1] Relaxation to phase-locked states for $\k>\k^*$.
    \item[OP2] Establishment of a rigorous mean-field limit as $N\to \infty$.
\end{enumerate}

It should be noted that the authors in \cite{HHK_disc} gave a partial answer to OP1, yielding convergence to a phase-locked state utilizing the fact that the system of correlations \eqref{correlations} is autonomous. However, they do not characterize the asymptotic state and our argument significantly improves the permissible values of $\k$ for which phase-locking occurs. We provide characterization of the fixed point in Lemma \ref{l:phase-locked} and Theorem \ref{Kur-SL} provides a direct correspondence between the fixed points of the Kuramoto model and the fixed points of the correlation system related to the Schr\"odinger-Lohe model. We then provide a more complete answer to OP1, via Theorem \ref{Lyapunov}, Theorem \ref{t:asymptotic}, and Theorem \ref{t:unique}. In particular, the condition for phase-locking provided in \cite{HHK_disc} is that $\k>4\max_{j,k}|\O_j-\O_k|$, while in Theorem \ref{Lyapunov} we push this further to $\k> (2\sqrt{\frac{11+8\sqrt{2}}{7}})\max_j |\O_j| \sim 3.57 \max_j |\O_j|$. Where, $\max_j |\O_j|<\max_{j,k}|\O_j-\O_k|$ since it is always assumed that $\sum_{j=1}^N\O_j=0$. This is further improved within Theorem \ref{t:unique} to the condition $\k>2\sqrt{2}\max_j |\O_j| \sim 2.83 \max_j |\O_j|$.

The OP2 will need a new mean-field theory that as far as the authors are aware does not yet exist. In particular, the result of Golse and Ha in \cite {GH} which gives the mean-field theory for the Lohe matrix model cannot be readily transferred to the Schr\"odinger-Lohe model due to the infinite dimensional nature of system \eqref{eq:SL_or}. This problem will remain open and will be the topic of a future study.

The remainder of the paper is structured as follows. Section \ref{S:prelim} will give more background on the Schr\"odinger-Lohe model and provide the necessary notation. Section \ref{NNL} provides a characterization of the fixed points of the system of correlations \eqref{correlations}, culminating in Theorem \ref{Kur-SL}. Section \ref{S:Lyapunov} introduces a Lyapunov function which in Theorem \ref{Lyapunov} is shown to satisfy a Gr\"onwall inequality, providing exponential relaxation to the phase-locked state characterized in Lemma \ref{l:phase-locked}. Section \ref{S:asym} provides explicit convergence in $L^2$ and yields the existence of asymptotic wave functions, $\tilde{\psi}_j$, characterized in Theorem \ref{t:asymptotic}. Section \ref{AV} gives estimates on the asymptotic values of the phase-locked state. In particular, bounds on the order parameter, and the angle of opening for the system are provided in Lemmas \ref{l:opening} and \ref{k*bound}. Section \ref{S:Stable} uses the bounds obtained in Section \ref{S:Lyapunov} and Section \ref{AV} in order to prove linear stability (instability) of the fixed points of \eqref{correlations}, which can be characterized via Lemma \ref{l:phase-locked}, via an eigenvalue perturbation argument.

\section{Preliminaries}\label{S:prelim}
Consider the  Schr\"odinger-Lohe model \eqref{eq:SL_or}. It is known, and easy to check directly, that the $L^2$ norm of each wave function is conserved in time. Therefore we let $\|\psi_j\|_2\equiv 1$ and rewrite the equation without the normalization in the nonlinear part of the coupling.
\begin{align}\label{SLmain}
&i\partial_t\psi_j=H\psi_j+\O_j\psi_j+\frac{i\kappa}{2N}\sum_{l=1}^N\left(\psi_l-\langle\psi_l,\psi_j\rangle\psi_j\right), \ \ \ j=1,...,N\\
   & \frac{1}{N}\sum_{j=1}^N \O_j=0,\label{rotateinvar}
\end{align}
with $H=-\frac{1}{2}\D+V$, for $\D$ the usual Laplacian and $V=V(x)$ could be a large class of potentials for which global well-posedness of the system \eqref{eq:SL_or} is well known. Physically relevant examples include the harmonic oscillator $V(x)=\frac{\omega^2}{2}|x|^2$, or any potential $V(x)\sim|x|^{-\gamma}$ that is longer range than the Coulomb interaction $\gamma<1$, or any linear combination of such potentials, see \cite{AR}. The equation \eqref{rotateinvar} is assumed to hold via moving to the rotational reference frame via the rotational invariance of the system. The system \eqref{SLmain} has the order parameter $\zeta=\frac{1}{N}\sum_{j=1}^N\psi_j$, which gives an equivalent formulation
\begin{align}
&i\partial_t\psi_j=H\psi_j+\O_j\psi_j+\frac{i\kappa}{2}\left(\zeta-\langle\zeta,\psi_j\rangle\psi_j\right), \ \ \ j=1,...,N\\
   & \frac{1}{N}\sum_{j=1}^N \O_j=0,
\end{align}

The order parameter can be used as a measure of how synchronized the system is. Indeed, $\|\zeta\|_2 \sim 1$ implies the system is synchronizing, and $\|\zeta\|\sim 0$ that the system is close to incoherence. This order parameter was used within \cite{HHK} to give a monotonicity argument leading to full synchronization of the system \eqref{eq:SL_or} in the homogenous regime $(\O_j\equiv 0)$.

Analogous to the Kuramoto model, whether or not an asymptotic state exists is dependent upon how large the coupling strength $\k>0$ is, relative to the set of natural frequencies $\{\O_j\}_{j=1}^N$. Indeed, the case of $N=2$ oscillators has been treated in \cite{AM} where it was determined that for $\L=\frac{(\O_1-\O_2)}{k}$, whenever $0\leq  \L\leq 1$ synchronous behavior occurs, while for $\L>1$, the behavior is periodic and no asymptotic state exists.\\

Indeed, the $N=2$ result is achieved by analyzing the correlation function $z=\langle \psi_1,\psi_2\rangle$, and its corresponding time derivative,
\begin{align}
    \ddt z=\frac{\k}{2}(1-2i\Lambda z-z^2),
\end{align}
which is a complex valued Riccati equation granting an explicit solution, which gives convergence to the phase locked state if and only if $0\leq \Lambda \leq 1$, and at an exponential rate if $\Lambda<1$. Attempting to utilize the same technique for a larger number of wave functions has proved difficult as for general $N$ the number of correlation functions is on the order of $N^2$. Indeed, for general $N$, letting $z_{jk}=\langle \psi_j,\psi_k\rangle$, you get the system
\begin{align}\label{correlations}
    \ddt z_{jk}&=i(\O_j-\O_k)z_{jk} +\frac{\k}{2N}\sum_{l=1}^N(z_{jl}+z_{lk})(1-z_{jk}), \ \ j,k=1,...,N.
\end{align}
The original system \eqref{eq:SL_or} is a system of $N$ partial differential equations. In order to reduce to more tenable ordinary differential equations, we must pay the price of controlling a system of $\frac{N(N-1)}{2}$ variables. This is the main reason why the system is well understood for $N=2$, while having little theory for $N\geq 3$.

However, there is structure to the system that can be taken advantage of. It is well known for the Schr\"odinger-Lohe system that the mass is preserved, i.e.
\begin{align}
    \ddt \|\psi_j\|_2=0,
\end{align}
which leads us to take $\|\psi_{j,0}\|_2\equiv 1$ for each $j=1,...N$. Hence, by Cauchy-Schwartz automatically the correlation functions $z_{jk}$ are confined to the unit ball within the complex plane:
\begin{align}
    |z_{jk}|=|\langle \psi_j,\psi_k\rangle|\leq \|\psi_j\|_2\|\psi_k\|_2=1,
\end{align}
therefore $z_{jk}(t)\in B_1(0)\subset \mathbb{C}$, for each $j,k=1,...,N$, and in particular $z_{jj}=\|\psi_j\|_2^2=1$. Therefore, the system of correlations \eqref{correlations} is an autonomous system with dynamics given by holomorphic functions confined to the unit ball within the complex plane.\\

Notation used throughout the remainder of the paper include, $z_{jk}=\langle \psi_j,\psi_k\rangle=\int_{\mathbb{R}^d}\overline{\psi_j(x)}\psi_k(x) \dx.$ The order parameter is given by $ \zeta=\frac{1}{N}\sum_{j=1}^N\psi_j,$ and correlations with the order parameter, $ z_j=\langle \zeta,\psi_j\rangle$. All norms and inner products are within $L^2(\R^d)$.
Further, the real and imaginary parts of the correlation functions will be denoted $\Re z_{jk}=r_{jk}, \Im z_{jk}=s_{jk}$ and similarly for $\Re z_{j}=r_{j}, \Im z_{j}=s_{j}$, and the $L^2$ norm of the order parameter denoted $\lambda(t)=\|\zeta\|_2(t)$. Other notation will be introduced where necessary.
\section{Characterization of the phase-locked state}\label{NNL}
In order to achieve convergence to the phase-locked state, we must first characterize what such a fixed point looks like. We take inspiration from the fact that in equation \eqref{correlations}, the Hamiltonian plays no role. We therefore construct new wave functions, $\psi_{j,\infty}(x)$, $j=1,...,N$, such that $\|\psi_{j,\infty}\|_2=1$ and where these functions solve the following stationary equations, excluding the Hamiltonian:
\begin{align}\label{SLstat}
0=\O_j\psi_{j,\infty}+\frac{i\kappa}{2}\left(\zeta_{\infty}-\langle\zeta_{\infty},\psi_{j,\infty}\rangle\psi_{j,\infty}\right), \ \ \ j=1,...,N.
\end{align}
We will utilize these wave functions, and their correlations in order to extract the asymptotic states for the system \eqref{correlations}. In the remainder of the paper, any index of $\infty$ will denote any functions, correlations, etc, whose information was derived from the system of stationary equations \eqref{SLstat}.

\begin{remark}  
Seeking solutions to \eqref{SLstat} is further justified by Theorem 1.1 of \cite{HH} which decouples the nonlinear PDE system \eqref{eq:SL_or} into a system of linear Schr\"odinger equations with the nonlinear part hidden in a system of ODEs (analagous to system \eqref{correlations}). The result therein is only given for the homogeneous system $\O_j\equiv 0$, but the requisite changes to the inhomogeneous case studied here are straightforward. We do not present this adaptation so as not to introduce new notation, however we make note that this process allows us to find a specific class of fixed points to \eqref{correlations}, but it does not guarantee that these are the only potential fixed points of the system.
\end{remark}

Now, starting with \eqref{SLstat} and multiplying by $\bar{\psi}_{j,\infty}$, and integrating, we achieve
\begin{align}
&0=\int_{\R^d}\bar{\psi}_{j,\infty}\left(\O_j\psi_{j,\infty}+\frac{i\kappa}{2}\left(\zeta_{\infty}-\langle\zeta_{\infty},\psi_{j,\infty}\rangle\psi_{j,\infty}\right)\right) \dx\\
    &0=\O_j+\frac{i\k}{2}(\langle\psi_{j,\infty},\zeta_{\infty}\rangle-\langle\zeta_{\infty},\psi_{j,\infty}\rangle),\\
    &\Im\langle \psi_{j,\infty},\zeta_{\infty}\rangle=\frac{\O_j}{\k}=-s_j^{\infty}.
\end{align}
Automatically, as $|z_j^{\infty}|\leq 1$, and thus $|s_j^{\infty}|\leq 1$ a necessary condition for a phase-locked state consistent with \eqref{SLstat} is that $\k\geq \max_{j=1,...N} |\O_j|$. We write this as a lemma.
\begin{lemma}\label{l:nec}
    Let $z_{jk}$, for $j,k=1,...,N$ be solutions to \eqref{correlations}, then a necessary condition for a phase-locked state, which further satisfies \eqref{SLstat}, to exist is that
    \begin{align}
        \k\geq \max_{j=1,...N} |\O_j|.
    \end{align}
\end{lemma}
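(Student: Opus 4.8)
The plan is to take the stationary identity \eqref{SLstat} and test it against $\bar\psi_{j,\infty}$, exactly reproducing the displayed computation that precedes the statement, so that the full PDE relation collapses to a single scalar equation for the order-parameter correlation $z_j^\infty=\langle\zeta_\infty,\psi_{j,\infty}\rangle$. Concretely, I would multiply \eqref{SLstat} by $\bar\psi_{j,\infty}$ and integrate over $\R^d$, using the normalization $\|\psi_{j,\infty}\|_2=1$ to eliminate the quadratic self-interaction term and the identification $\int_{\R^d}\bar\psi_{j,\infty}\zeta_\infty\dx=\langle\psi_{j,\infty},\zeta_\infty\rangle$. This yields the relation $0=\O_j+\frac{i\k}{2}\left(\langle\psi_{j,\infty},\zeta_\infty\rangle-\langle\zeta_\infty,\psi_{j,\infty}\rangle\right)$.

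Next I would exploit the conjugate symmetry of the inner product. Since $\langle\psi_{j,\infty},\zeta_\infty\rangle=\overline{z_j^\infty}$, the bracketed difference equals $\overline{z_j^\infty}-z_j^\infty=-2i\,\Im z_j^\infty=-2is_j^\infty$. Substituting, the factors of $i$ cancel and the expression reduces to the purely real identity $\O_j+\k s_j^\infty=0$, that is $s_j^\infty=-\O_j/\k$ for each $j$. Some care is needed with the sign dictated by the convention $\langle f,g\rangle=\int\bar f g\,\dx$, but this is bookkeeping rather than a genuine difficulty.

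The final step is the a priori bound that converts this exact identity into the desired inequality. Because each $\psi_{j,\infty}$ is $L^2$-normalized, the triangle inequality gives $\|\zeta_\infty\|_2\le\frac1N\sum_{l=1}^N\|\psi_{l,\infty}\|_2=1$, whence Cauchy–Schwarz yields $|z_j^\infty|=|\langle\zeta_\infty,\psi_{j,\infty}\rangle|\le\|\zeta_\infty\|_2\|\psi_{j,\infty}\|_2\le1$. In particular $|s_j^\infty|\le|z_j^\infty|\le1$, and combining this with $s_j^\infty=-\O_j/\k$ forces $|\O_j|\le\k$ for every $j$. Maximizing over $j$ then delivers $\k\ge\max_{j}|\O_j|$, which is the assertion.

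Since the argument is essentially algebraic, I do not expect a serious analytic obstacle. The only two points demanding attention are the correct handling of the complex conjugation and sign in the test-function pairing, and the elementary but indispensable observation that the \emph{averaged} order parameter $\zeta_\infty$ has $L^2$ norm at most one; it is precisely this bound on $\|\zeta_\infty\|_2$ (rather than a bound on any single $\psi_{l,\infty}$) that upgrades the identity $s_j^\infty=-\O_j/\k$ to the necessary condition $\k\ge\max_j|\O_j|$.
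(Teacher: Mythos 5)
Your proposal is correct and follows essentially the same route as the paper: testing \eqref{SLstat} against $\bar\psi_{j,\infty}$, extracting the identity $s_j^{\infty}=-\O_j/\k$ from the imaginary part, and concluding via $|s_j^{\infty}|\leq|z_j^{\infty}|\leq 1$. You even make explicit a point the paper leaves implicit, namely that the triangle inequality bound $\|\zeta_{\infty}\|_2\leq 1$ is what justifies $|z_j^{\infty}|\leq 1$ through Cauchy--Schwarz.
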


Continuing, we multiply by $\bar{\zeta}_{\infty}$ and integrate,
\begin{align}
&0=\int_{\R^d}\bar{\zeta}_{\infty}\left(\O_j\psi_{j,\infty}+\frac{i\kappa}{2}\left(\zeta_{\infty}-\langle\zeta_{\infty},\psi_{j,\infty}\rangle\psi_{j,\infty}\right)\right) \dx\\
    &0=\O_j\langle\zeta_{\infty},\psi_{j,\infty}\rangle+\frac{i\k}{2}(\|\zeta_{\infty}\|_2^2-\langle\zeta_{\infty},\psi_{j,\infty}\rangle^2),
    \end{align}
    The real part does not grant any new information, however the imaginary part does,
    \begin{align}
0&=\O_j\Im\langle\zeta_{\infty},\psi_{j,\infty}\rangle+\frac{\k}{2}(\|\zeta_{\infty}\|_2^2-\Re\langle\zeta_{\infty},\psi_{j,\infty}\rangle^2+\Im\langle\zeta_{\infty},\psi_{j,\infty}\rangle^2),\\
\Re\langle\zeta_{\infty},\psi_{j,\infty}\rangle^2&=\|\zeta_{\infty}\|_2^2-\Im\langle\zeta_{\infty},\psi_{j,\infty}\rangle^2,\\
\l_{\infty}^2&=(r_j^{\infty})^2+(s_j^{\infty})^2
\end{align}
for each $j=1,...,N$. This implies that for each $j=1,...,N$, we have $|\langle\zeta_{\infty},\psi_{j,\infty}\rangle|=\|\zeta_{\infty}\|_2$, and since $\|\psi_{j,\infty}\|_2=1$, by Cauchy-Schwartz, $\zeta_{\infty}=\l_{\infty} e^{i\a_j}\psi_{j,\infty}$, for each $j=1,..,N$, where $\a_j$ denotes how far each individual $\psi_{j,\infty}$ is rotated away from the order parameter $\zeta_{\infty}$, and $\l_{\infty}=\|\zeta_{\infty}\|_2 \in [0,1]$.\\

Let us now rewrite the phase-locked state in terms of the angles $\a_j$ and the norm of the order parameter $\l_{\infty}$.
\begin{lemma}\label{l:phase-locked}
    Let $\psi_{j,\infty},$ for $ j=1,...,N$ solve the stationary equation \eqref{SLstat}. Then for $\a_j \in [-\pi,\pi)$ determined by the parameters $\k,\O_j$, the following relations characterize a class of fixed points of the system \eqref{correlations}:
\begin{align}
    \psi_{j,\infty}&=\frac{\zeta_{\infty}}{\l_{\infty}}e^{-i\a_j},\\
    r_{jk}^{\infty}&=\cos(\a_j-\a_k),\\
    s_{jk}^{\infty}&=\sin(\a_j-\a_k),\\
    z_{jk}^{\infty}&=e^{i(\a_j-\a_k)},\\
    r_j^{\infty}&=\l_{\infty}\cos(\a_j),\\
    s_j^{\infty}&=-\l_{\infty}\sin(\a_j)=-\frac{\O_j}{\k},\label{lOmega}\\
    z_j^{\infty}&=\l_{\infty}e^{-i\a_j},\\
    \l_{\infty}&=\frac{1}{N}\sum_{j=1}^N\cos(\a_j),\label{lcos}\\
    0&=\frac{1}{N}\sum_{j=1}^N\sin(\a_j).
\end{align}
\end{lemma}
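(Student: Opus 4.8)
The plan is to extract every identity in the statement from the single relation $\zeta_\infty=\l_\infty e^{i\a_j}\psi_{j,\infty}$ that was already obtained above from the equality case of Cauchy--Schwarz (valid in the coherent regime $\l_\infty>0$, where it simultaneously \emph{defines} the angles $\a_j$). Solving this relation for $\psi_{j,\infty}$ gives the first identity $\psi_{j,\infty}=\frac{\zeta_\infty}{\l_\infty}e^{-i\a_j}$ immediately, and everything else follows by substitution.

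Next I would feed this expression into the definitions of the correlation functions. Using the sesquilinearity of the inner product together with $\|\zeta_\infty\|_2^2=\l_\infty^2$, the two-index correlation collapses: $z_{jk}^\infty=\langle \psi_{j,\infty},\psi_{k,\infty}\rangle=\frac{e^{i(\a_j-\a_k)}}{\l_\infty^2}\|\zeta_\infty\|_2^2=e^{i(\a_j-\a_k)}$, whose real and imaginary parts are $r_{jk}^\infty=\cos(\a_j-\a_k)$ and $s_{jk}^\infty=\sin(\a_j-\a_k)$. In the same way $z_j^\infty=\langle \zeta_\infty,\psi_{j,\infty}\rangle=\l_\infty e^{-i\a_j}$, giving $r_j^\infty=\l_\infty\cos\a_j$ and $s_j^\infty=-\l_\infty\sin\a_j$. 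The remaining piece of \eqref{lOmega}, namely $s_j^\infty=-\O_j/\k$, then follows by matching against the relation $\Im\langle \psi_{j,\infty},\zeta_\infty\rangle=\O_j/\k$ derived just before the statement.

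The two summation constraints come from the defining property $\zeta_\infty=\frac1N\sum_{j=1}^N\psi_{j,\infty}$ of the order parameter. Pairing both sides with $\zeta_\infty$ and inserting the value of $z_j^\infty$ just computed yields $\l_\infty^2=\frac1N\sum_{j=1}^N z_j^\infty=\frac{\l_\infty}{N}\sum_{j=1}^N e^{-i\a_j}$; dividing through by $\l_\infty$ and separating real and imaginary parts produces exactly $\l_\infty=\frac1N\sum_{j=1}^N\cos\a_j$ (relation \eqref{lcos}) and $0=\frac1N\sum_{j=1}^N\sin\a_j$, closing the list.

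Since the argument is a sequence of algebraic substitutions into quantities already shown to be parallel, I do not expect a genuine obstacle. The only point requiring care is the degenerate case $\l_\infty=0$, where $\zeta_\infty=0$, the angles $\a_j$ are undefined, and the parallelism relation fails; I would therefore phrase the characterization for the coherent regime $\l_\infty>0$. The other bookkeeping concern is to keep consistent track of the convention $\langle f,g\rangle=\int_{\R^d}\overline{f}\,g\dx$, so that the complex conjugations land on the correct phase factors $e^{\pm i\a_j}$ throughout.
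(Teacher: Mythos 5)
Your proposal is correct and follows essentially the same route as the paper: the paper also derives the parallelism $\zeta_{\infty}=\l_{\infty}e^{i\a_j}\psi_{j,\infty}$ from the equality case of Cauchy--Schwarz (after pairing \eqref{SLstat} with $\bar{\psi}_{j,\infty}$ and $\bar{\zeta}_{\infty}$), and the listed relations are obtained exactly by the substitutions you describe, with the summation identities coming from the definition of the order parameter. The one step you should add is the paper's actual one-line proof: a check that these values are genuinely equilibria of \eqref{correlations}, either by direct substitution (using $\frac{1}{N}\sum_l z_{jl}^{\infty}=\l_{\infty}e^{i\a_j}$ and $\O_j=\k\l_{\infty}\sin\a_j$ the two terms on the right-hand side of \eqref{correlations} cancel), or by noting that solutions of \eqref{SLstat} are time-independent solutions of the Hamiltonian-free flow, whose correlations obey the same autonomous system \eqref{correlations} and are constant, forcing the right-hand side to vanish. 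Your caveat about the degenerate case $\l_{\infty}=0$ is sound and is implicitly assumed in the paper as well.
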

Plugging these conditions into \eqref{correlations} yields a fixed point.
Note further that these relations imply that the particular phase-locked states we have found are on the boundary of the unit ball in the complex plane. Indeed, $|z_{jk}^{\infty}|^2=\cos(\a_j-\a_k)^2+\sin(\a_j-\a_k)^2=1$, so that $z_{jk}^{\infty}\in \partial B_1(0)$. Of further interest, when observing the correlation values with the order parameter $z_j^{\infty}$, we can recover the equilibria of the Kuramoto model.

Let $\w_j \in \R$ be fixed natural frequencies and consider the Kuramoto model for $N$ phase-coupled oscillators
\begin{align}
    \ddt \th_j&=\o_j+\frac{K}{N}\sum_{l=1}^N\sin(\th_l-\th_j), \ \ \ j=1,...,N,\\
    \frac{1}{N}\sum_{j=1}^N \o_j&=0.
\end{align}
Let $Re^{i\Phi}=\frac{1}{N}\sum_{j=1}^Ne^{i\th}$ be the order parameter for the Kuramoto model. Then, as we have shifted to the rotational reference frame with $\frac{1}{N}\sum_{j=1}^N \o_j=0$, we know $\Phi=0$, and the system can be reduced to
\begin{align}\label{Kuramoto}
    \ddt \th_j=\o_j-KR\sin(\th_j), \ \ \ j=1,...,N.
\end{align}
Indeed, for large enough $K$, see \cite{MS}, the phase-locked state of this system is given by
\begin{align}\label{Kur:fix}
    \sin(\th_j^{\infty})=\frac{\o_j}{KR_{\infty}}, \ \ \ j=1,...,N,
\end{align}
where $R_{\infty}=\frac{1}{N}\sum_{j=1}^Ne^{i\th_j^{\infty}}=\frac{1}{N}\sum_{j=1}^N\cos(\th_j^{\infty})$, again due to the rotational reference frame.

The following theorem provides a direct correspondence between the fixed points of the Kuramoto model and the fixed points of system \eqref{correlations} which also satsify \eqref{SLstat}. In and of itself, this result is important as the Schr\"odinger-Lohe model was constructed as a generalization of the Kuramoto model to the quantum setting. It is therefore not surprising that we can find this result, however, it will also prove useful in Section \ref{S:Stable} for proving linear stability (instability) results.
\begin{theorem}\label{Kur-SL}
    Let $\o_j=\O_j \in \R$ fixed natural frequencies such that $\sum_j\o_j=0$, and $K=\k>0$ large enough so that phase-locked states exist for both the Kuramoto model \eqref{Kuramoto} and the correlation system derived from the Schr\"odinger-Lohe model \eqref{correlations}, which further satisfies \eqref{SLstat}. Then the asymptotic angular values $\a_j$ can be mapped onto the corresponding phase-locked state $\th_j^{\infty}$ of the Kuramoto model in the following way:
    \begin{align}\label{SLtoKur}
        -\a_j \mapsto \th_j^{\infty},
    \end{align}
    and similarly
    \begin{align}\label{KurtoSL}
        \th_j^{\infty} \mapsto -\a_j.
    \end{align}
    Further, this implies that the phase-locked state described in Lemma \eqref{l:phase-locked} always exists if a phase-locked state of the Kuramoto model exists.
\end{theorem}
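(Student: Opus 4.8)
The plan is to show that the closed system of scalar equations characterizing an admissible Schr\"odinger--Lohe phase-locked state coincides, after the substitution \eqref{SLtoKur} together with the identification $R_{\infty}=\l_{\infty}$ of the two order parameters, with the closed system characterizing a Kuramoto phase-locked state. Concretely, by Lemma \ref{l:phase-locked} the unknowns $\{\a_j\}$ and $\l_{\infty}$ of the SL fixed point are governed by three relations: the frequency relation \eqref{lOmega}, the norm relation \eqref{lcos}, and the vanishing-mean condition $\frac{1}{N}\sum_j \sin(\a_j)=0$. On the Kuramoto side the unknowns $\{\th_j^{\infty}\}$ and $R_{\infty}$ are governed by \eqref{Kur:fix}, the definition $R_{\infty}=\frac{1}{N}\sum_j\cos(\th_j^{\infty})$, and the analogous constraint $\frac{1}{N}\sum_j\sin(\th_j^{\infty})=0$ forced by the rotational reference frame. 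I would first line up these two triples explicitly, since each amounts to exactly one frequency equation, one norm equation, and one centering constraint in the same number of unknowns.

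The second step is to verify the two triples are interchanged by \eqref{SLtoKur}, one equation at a time, using only the parity of the trigonometric functions. Since cosine is even, $\cos(\a_j)=\cos(-\a_j)=\cos(\th_j^{\infty})$, so \eqref{lcos} becomes $R_{\infty}=\frac{1}{N}\sum_j\cos(\th_j^{\infty})$ and simultaneously forces $R_{\infty}=\l_{\infty}$; since sine is odd, the centering condition $\frac{1}{N}\sum_j\sin(\a_j)=0$ is equivalent to $\frac{1}{N}\sum_j\sin(\th_j^{\infty})=0$. The only step requiring genuine care is reconciling the frequency relation \eqref{lOmega} with \eqref{Kur:fix}: one must carry out the sign bookkeeping comparing $s_j^{\infty}=-\l_{\infty}\sin(\a_j)=-\O_j/\k$ against $\sin(\th_j^{\infty})=\o_j/(KR_{\infty})$ under $\o_j=\O_j$ and $K=\k$, and it is precisely this comparison that pins down the orientation of the correspondence. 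The reverse map \eqref{KurtoSL} is simply the inverse substitution, so the two systems are genuinely equivalent rather than one merely implying the other.

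The existence transfer then follows from this equivalence together with the observation, recorded just after Lemma \ref{l:phase-locked}, that substituting the listed relations into \eqref{correlations} yields a bona fide fixed point. Given any Kuramoto phase-locked state $\{\th_j^{\infty}\}$ with order parameter $R_{\infty}$, I would set $\a_j=-\th_j^{\infty}$ and $\l_{\infty}=R_{\infty}$; by the equation-by-equation check these $\a_j,\l_{\infty}$ satisfy all constraints of Lemma \ref{l:phase-locked}, hence determine wave functions $\psi_{j,\infty}$ and correlations solving \eqref{SLstat} and fixing \eqref{correlations}. This shows the SL phase-locked state exists whenever the Kuramoto one does.

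I expect the main obstacle to be purely the sign and branch bookkeeping in matching \eqref{lOmega} to \eqref{Kur:fix}. Because both order parameters are nonnegative and the centering constraints are symmetric under $\a_j\mapsto-\a_j$, essentially all the content of the theorem is concentrated in getting the orientation of the frequency relation right, and in confirming that no additional independent constraint is hiding on either side---that is, that \eqref{Kur:fix} together with the two mean conditions is exactly as restrictive as the SL triple, so that the identification of order parameters loses no solutions in either direction.
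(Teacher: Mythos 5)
Your overall strategy---lining up the three characterizing relations on each side (frequency, norm, centering) and matching them one equation at a time under the identification $R_{\infty}=\l_{\infty}$---is essentially the paper's own approach. Your handling of the norm and centering relations via the parity of cosine and sine is also fine. The genuine gap sits exactly in the step you yourself flagged as the crux and then resolved with the wrong orientation. Carry out the comparison you deferred: \eqref{lOmega} says $s_j^{\infty}=-\l_{\infty}\sin(\a_j)=-\O_j/\k$, i.e. $\sin(\a_j)=\O_j/(\k\l_{\infty})$, while \eqref{Kur:fix} says $\sin(\th_j^{\infty})=\o_j/(KR_{\infty})$. Under the hypotheses $\o_j=\O_j$, $K=\k$ and $R_{\infty}=\l_{\infty}$, these force $\sin(\a_j)=\sin(\th_j^{\infty})$. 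Since sine is odd, your proposed substitution $\a_j=-\th_j^{\infty}$ turns \eqref{lOmega} into $\sin(\a_j)=-\O_j/(\k\l_{\infty})$, which is compatible with \eqref{lOmega} only if every $\O_j=0$. So the concrete claim in your third paragraph---that setting $\a_j=-\th_j^{\infty}$, $\l_{\infty}=R_{\infty}$ ``satisfies all constraints of Lemma \ref{l:phase-locked}''---is false for any nontrivial frequency vector; the reflected configuration is a fixed point of \eqref{correlations} with frequencies $-\O_j$, not $\O_j$, because the term $i(\O_j-\O_k)z_{jk}$ is sign-sensitive.

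The consistent orientation, given the conventions of Lemma \ref{l:phase-locked}, is $\a_j=+\th_j^{\infty}$, and this is in fact what the paper's own proof derives: it sets $y_j^{\infty}=\bar z_j^{\infty}/\l_{\infty}=e^{i\a_j}$, observes $\Im(y_j^{\infty})=\sin(\a_j)=\O_j/(\k\l_{\infty})=\sin(\th_j^{\infty})$, and concludes $y_j^{\infty}=e^{i\th_j^{\infty}}$, i.e. $e^{i\a_j}=e^{i\th_j^{\infty}}$. (The minus signs displayed in \eqref{SLtoKur}--\eqref{KurtoSL} are themselves at odds with \eqref{lOmega} and with the paper's proof; the reflection is visible only in the physical phases, since $\psi_{j,\infty}=\l_{\infty}^{-1}e^{-i\a_j}\zeta_{\infty}$ places the $j$-th quantum oscillator at angle $-\a_j$ relative to $\zeta_{\infty}$, but as an identification of the parameters $\a_j$ with the Kuramoto angles the map must be taken without the sign flip.) Your argument is salvageable almost verbatim: replace $\a_j=-\th_j^{\infty}$ by $\a_j=+\th_j^{\infty}$, note that cosine evenness and sine oddness make the norm and centering relations insensitive to this choice, and then \eqref{lOmega} is the unique relation that pins the orientation---and with the corrected sign it holds, so the existence transfer goes through.
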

\begin{proof}
    As it is assumed that $\o_j=\O_j$ and $K=\k$, we need only see that $\l_{\infty}=R_{\infty}$. We have already shown that
    \begin{align}
        \l_{\infty}=\frac{1}{N}\sum_{j=1}^N\cos(\a_j)=\frac{1}{N}\sum_{j=1}^N\cos(\th_j^{\infty})=R_{\infty},
    \end{align}
then for the variables $y_j^{\infty}=\frac{\bar{z}_j^{\infty}}{\l_{\infty}}=e^{i\a_j}$, $j=1,...,N$, and $\th_j^{\infty}$ given by \eqref{Kur:fix},
    \begin{align}
        \Im(y_j^{\infty})&=\sin(\a_j)=\frac{\O_j}{\k\l_{\infty}},\\
        \sin(\th_j^{\infty})&=\frac{\o_j}{KR_{\infty}}, \ \ \ j=1,...,N.
    \end{align}
    In this way we see that both $e^{i\th_j^{\infty}}$ and $y_j^{\infty}$ are found on the unit sphere with $\Im(y_j^{\infty})=\sin(\th_j^{\infty})$, and hence $y_j^{\infty}=e^{i\th_j^{\infty}}$, and any phase-locked state of the Scrh\"odinger-Lohe correlation model \eqref{correlations}, which also satisfies \eqref{SLstat}, corresponds to the same phase-locked state of the Kuramoto system \eqref{Kuramoto}.
\end{proof}

\begin{remark}
    Theorem \ref{Kur-SL} does not characterize all potential fixed points of the original Schr\"odinger-Lohe system \eqref{eq:SL_or}. However, what it grants is that if $K>K_c$ within the Kuramoto model \eqref{e:kur1}, so that a fixed point exists, then the equivalent Schr\"odinger-Lohe system given by the same parameters will also have a fixed point of the system \eqref{correlations} that further satisfies \eqref{SLstat}, where the same angle relations between the classical oscillators and the quantum oscillators can be found via relations \eqref{SLtoKur} and \eqref{KurtoSL}.
\end{remark}

Despite the direct connection of phase-locked states, proof of convergence for the Schr\"odinger-Lohe system is much more complicated. For Kuramoto, there is no spatial component, all oscillators remain constrained to the unit sphere, and in many regimes is driven by a convex gradient flow. Even though all fixed points of \eqref{correlations} satisfying \eqref{SLstat} lie on the unit sphere, in general the dynamics are found in the interior of the unit ball (excluding assumptions which directly reduce the system to Kuramoto). However, given Theorem \ref{Kur-SL}, we can extract more information about the fixed points derived in Lemma \ref{l:phase-locked}. Indeed, given the rotational invariance of the system we can focus on the case of all $\a_j \in (-\frac{\pi}{2},\frac{\pi}{2})$, so that $\cos(\a_j)>0$ for all $j=1,...,N$. These are the only potentially stable fixed points from those characterized by Lemma \ref{l:phase-locked}, as will be shown later in Section \ref{S:Stable}. In the next section we provide the first quantitative result on convergence to the phase-locked state for the Schr\"odinger-Lohe model with more than two nonidentical natural frequencies.

\section{Lyapunov stability of the phase-locked state}\label{S:Lyapunov}
With the characterization of the fixed points given in the previous section, we proceed by uncovering a Lyapunov function which provides stability and exponential convergence to the phase-locked state.

Before stating the theorem, let us derive several useful equations. Taking the real $(r_{jk})$ and imaginary $(s_{jk})$ parts of \eqref{correlations}, and then averaging each of these $(r_j,s_j)$ gives the following four equations:
\begin{align}
    \ddt r_{jk}&=-(\O_j-\O_k)s_{jk}+\frac{\k}{2}\left((r_j+r_k)(1-r_{jk})+(s_k-s_j)s_{jk}\right),\label{eq:r1}\\
    \ddt s_{jk}&=(\O_j-\O_k)r_{jk}+\frac{\k}{2}\left(-(r_j+r_k)s_{jk}+(s_k-s_j)(1-r_{jk})\right),\label{eq:s1}\\
    \ddt r_j&=\O_js_j-\frac{1}{N}\sum_{l=1}^N\O_ls_{lj}+\frac{\k}{2}\left(r_j-r_j^2+s_j^2+\frac{1}{N}\sum_{l=1}^N(r_l-r_lr_{lj}-s_ls_{lj})\right),\label{eq:r2}\\
    \ddt s_j&=-\O_jr_j+\frac{1}{N}\sum_{l=1}^N\O_lr_{lj}+\frac{\k}{2}\left(s_j-2r_js_j+\frac{1}{N}\sum_{l=1}^N(s_lr_{lj}-r_ls_{lj})\right).\label{eq:s2}
\end{align}
Plugging the asymptotic values previously determined in Lemma \ref{l:phase-locked} into the above system yields a stationary state, as to be expected. To prove convergence of the system to this steady state in some basin of attraction to be determined, we further define
\begin{align}
    f_{jk}&=r_{jk}^{\infty}-r_{jk}, \ \ \ \ g_{jk}=s_{jk}^{\infty}-s_{jk}, \label{aux1}\\ 
    f_j&=r_j^{\infty}-r_j, \ \ \ \ g_j=s_j^{\infty}-s_j.\label{aux2}
\end{align}
The functions, $f_{jk},g_{jk}$ measure how far the real and imaginary parts of the correlations are away from their prescribed asymptotic value, found in Lemma \ref{l:phase-locked}, while $f_j,g_j$ provide the same except for the real and imaginary parts of the macroscopic correlation with the order parameter. The Lyapunov candidate is then given by
\begin{align}\label{def:Lyap}
    \cL=\frac{1}{2N^2}\sum_{j,k=1}^N(f_{jk}^2+g_{jk}^2).
\end{align}
Clearly $\cL(t)\geq 0$. The remainder of the section will be devoted to providing a regime in which we can prove exponential decay of this function via a Gr\"onwall estimate. Of particular importance will be how large the coupling strength $\k$ is with respect to the natural frequencies $\O_j$, and further how close initial data is to the fixed point prescribed in Lemma \ref{l:phase-locked}.

\begin{theorem}\label{Lyapunov}
    Let $z_{jk}$ for $j,k=1,...,N$ be solutions to \eqref{correlations} with $\O_j\in\R$ such that $\sum_j \O_j=0$. Then, if $\k> (2\sqrt{\frac{11+8\sqrt{2}}{7}})\max_j |\O_j|$ and initial data satisfies
    \begin{align}
        &M_1=\sup_{j,k}|f_{jk}|, \ \ \ M_2=\sup_{j,k}|g_{jk}|,\\
        &4N(M_1+M_2)-2M_1<C_1(\k),
    \end{align}
    for a positive constant $C_1(\k)$ to be determined later. The system of correlations \eqref{correlations} undergoes exponential convergence to the phase-locked state determined by Lemma \ref{l:phase-locked}, under the assumption that all $\a_j \in (-\frac{\pi}{2},\frac{\pi}{2})$. The rate of convergence is given by the Gr\"onwall inequality,
    \begin{align}
        \ddt \cL \leq -C_2(\k,N,M_1,M_2)\cL,
    \end{align}
    for $C_2(\k,N,M_1,M_2)$ to be determined below.
\end{theorem}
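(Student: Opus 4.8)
The plan is to differentiate $\cL$ along the flow and show that $\ddt\cL$ decomposes into a coercive dissipative term, which drives the decay, and remainder terms controlled by the smallness of the initial data. Since the asymptotic values $r_{jk}^\infty, s_{jk}^\infty$ are constants, $\ddt f_{jk}=-\ddt r_{jk}$ and $\ddt g_{jk}=-\ddt s_{jk}$, so that
\begin{align*}
\ddt\cL=-\frac{1}{N^2}\sum_{j,k=1}^N\left(f_{jk}\,\ddt r_{jk}+g_{jk}\,\ddt s_{jk}\right).
\end{align*}
Into this I would insert the evolution equations \eqref{eq:r1} and \eqref{eq:s1} and subtract their stationary counterparts (whose right-hand sides vanish at the fixed point of Lemma \ref{l:phase-locked}). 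Writing $r_{jk}=r_{jk}^\infty-f_{jk}$, $s_{jk}=s_{jk}^\infty-g_{jk}$, and using that $z_j=\frac1N\sum_l z_{lj}$ forces $f_j=\frac1N\sum_l f_{lj}$ and $g_j=\frac1N\sum_l g_{lj}$, every term on the right becomes expressible in the differences alone.

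The first key point is a cancellation in the frequency-difference terms. Their contribution to $\ddt\cL$ is
\begin{align*}
-\frac{1}{N^2}\sum_{j,k=1}^N(\O_j-\O_k)\left(-f_{jk}s_{jk}+g_{jk}r_{jk}\right),
\end{align*}
and upon substituting $r_{jk}=r_{jk}^\infty-f_{jk}$, $s_{jk}=s_{jk}^\infty-g_{jk}$ the bilinear $f_{jk}g_{jk}$ pieces cancel, leaving only terms linear in $(f_{jk},g_{jk})$ weighted by $(\O_j-\O_k)r_{jk}^\infty$ and $(\O_j-\O_k)s_{jk}^\infty$. I would combine these linear remainders with the linear parts of the coupling and use the identities of Lemma \ref{l:phase-locked} (in particular \eqref{lOmega} and \eqref{lcos}) to check that they cancel or are absorbed into the quadratic form below.

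Next I would isolate the coercive part of the coupling. Differentiating the $\frac\k2(r_j+r_k)(1-r_{jk})$ and $-\frac\k2(r_j+r_k)s_{jk}$ terms of \eqref{eq:r1}, \eqref{eq:s1} in the self-variables produces
\begin{align*}
-\frac{\k}{2N^2}\sum_{j,k=1}^N(r_j^\infty+r_k^\infty)\left(f_{jk}^2+g_{jk}^2\right).
\end{align*}
Because we restrict to $\a_j\in(-\frac\pi2,\frac\pi2)$, Lemma \ref{l:phase-locked} gives $r_j^\infty=\l_\infty\cos\a_j>0$, so this form is negative definite. A lower bound on $\l_\infty\min_j\cos\a_j$, obtained self-consistently from $\l_\infty=\frac1N\sum_j\cos\a_j$ and $\sin\a_j=\O_j/(\k\l_\infty)$ (sharpened in Section \ref{AV}), yields a positive coercivity constant $C_1(\k)$; the numerical threshold $2\sqrt{(11+8\sqrt2)/7}\,\max_j|\O_j|$ is precisely the smallest $\k$ for which this lower bound remains positive, contributing the term $-C_1(\k)\cL$.

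The remaining terms are cubic: products of one difference with $f_j$, $g_j$, or with a second difference. Using $|f_j|\le\frac1N\sum_l|f_{lj}|\le M_1$ and $|g_j|\le M_2$ together with $|f_{jk}|\le M_1$, $|g_{jk}|\le M_2$, these are bounded by $\big(4N(M_1+M_2)-2M_1\big)\cL$ after the same normalization, whence
\begin{align*}
\ddt\cL\le-\Big(C_1(\k)-\big(4N(M_1+M_2)-2M_1\big)\Big)\cL=:-C_2(\k,N,M_1,M_2)\,\cL,
\end{align*}
and $C_2>0$ exactly under the hypothesis $4N(M_1+M_2)-2M_1<C_1(\k)$. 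The main obstacle is the middle step: verifying that the linear-in-$(f,g)$ remainders cancel via the fixed-point relations and establishing the quantitative coercivity of the quadratic form. Tracking how the frequency terms combine with the coupling so that the surviving negative-definite contribution has a coefficient bounded below by a positive multiple of $\k$ minus the frequency scale is delicate, and pinning down the sharp constant $2\sqrt{(11+8\sqrt2)/7}$ requires the worst-case estimates on $\l_\infty$ and the opening angles.
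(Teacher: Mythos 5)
Your overall skeleton (differentiate $\cL$, cancel the terms linear in the differences via the fixed-point relations, isolate a coercive quadratic term with coefficient $r_j^\infty+r_k^\infty$, absorb the rest) matches the structure of the paper's proof, and your observation that the bilinear pieces in the frequency terms cancel is correct. However, there are two genuine gaps. First, you misidentify where the threshold $\k>2\sqrt{\frac{11+8\sqrt{2}}{7}}\max_j|\O_j|$ comes from. It is not ``the smallest $\k$ for which $\l_{\infty}\min_j\cos\a_j$ remains positive'': that quantity is positive for every $\k$ above the existence threshold, by the theorem's own standing assumption $\a_j\in(-\frac{\pi}{2},\frac{\pi}{2})$. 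The troublesome terms are the cross terms $2f_j\left(f_{jk}(1-r_{jk}^{\infty})-s_{jk}^{\infty}g_{jk}\right)+2g_j\left(g_{jk}(1-r_{jk}^{\infty})+f_{jk}s_{jk}^{\infty}\right)$, which are \emph{quadratic} in the unknowns with fixed coefficients $(1-r_{jk}^{\infty})$ and $s_{jk}^{\infty}$; they cannot be made small relative to $\cL$ by shrinking the initial data, so your plan of lumping them into a remainder bounded by $\left(4N(M_1+M_2)-2M_1\right)\cL$ via $|f_{jk}|\le M_1$ fails for them. The paper absorbs them into the coercive term by Young's inequality with a parameter $a_{jk}$, using $\frac{1}{N}\sum_j(f_j^2+g_j^2)\le\frac{1}{N^2}\sum_{j,k}(f_{jk}^2+g_{jk}^2)$ and $(1-r_{jk}^{\infty})^2+(s_{jk}^{\infty})^2=2(1-r_{jk}^{\infty})$; an admissible $a_{jk}$ exists if and only if $-1+a_{jk}(r_j^{\infty}+r_k^{\infty})-4a_{jk}^2(1-r_{jk}^{\infty})>0$ has a solution, i.e.\ the discriminant is positive, i.e.\ $r_{jk}^{\infty}>2(\sqrt{2}-1)$ --- and \emph{that} is exactly what the numerical constant guarantees (Corollary \ref{l:rbound2}).

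Second, your Gr\"onwall closure is circular. The quantities $M_1,M_2$ bound the differences at $t=0$ only, yet you invoke $|f_{jk}(t)|\le M_1$ and $|f_j(t)|\le M_1$ at all later times to control the genuinely cubic contribution $(f_{jk}^2+g_{jk}^2)(f_j+f_k)$. The decay of $\cL$ requires these pointwise bounds, and the pointwise bounds at later times can only be recovered from the decay of $\cL$ --- with a loss of a factor of $N$, since $\sup_j|f_j(t)|\le N\sqrt{2\cL(t)}$. The paper handles this with a continuation argument: the set $I_{max}$ of times on which $\ddt\cL\le-\frac{C_{M_1}}{2}\cL$ is shown to be nonempty, closed and open, and the openness step uses precisely the $N$-weighted hypothesis $4N(M_1+M_2)-2M_1<C_B$ to convert the decay of $\cL$ back into $\sup_j|f_j(t)|\le N(M_1+M_2)$ and keep the Gr\"onwall coefficient strictly negative. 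Without this bootstrap, and without the factor-$N$ bookkeeping it forces, your argument does not close; in particular the factor $4N$ in the hypothesis cannot be produced by the static, initial-time estimate you propose.
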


Before we prove this theorem let us prove a couple useful lemmas. The first provides a lower bound on the asymptotic value of the order parameter, $\|\zeta_{\infty}\|_2=\l_{\infty}$, in relation to the size of $\k$ relative to the natural frequencies $\O_j$.
\begin{lemma}
    If $\k\geq A\max_j |\O_j|$, for some $A\geq 2$, then $\l_{\infty}^2\geq \frac{1+\sqrt{1-\frac{4}{A^2}}}{2}$.
\end{lemma}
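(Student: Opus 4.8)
The plan is to prove the lower bound on $\l_\infty^2$ directly from the two scalar identities already established in Lemma~\ref{l:phase-locked}, namely $\l_\infty = \frac{1}{N}\sum_j \cos(\a_j)$ together with the relation $\l_\infty \sin(\a_j) = \O_j/\k$ coming from \eqref{lOmega}. The key observation is that the hypothesis $\k \geq A\max_j|\O_j|$ with $A \geq 2$ forces each individual angle $\a_j$ to satisfy a uniform bound on $|\sin(\a_j)|$, which in turn yields a pointwise lower bound on $\cos(\a_j)$, and averaging then produces a lower bound on $\l_\infty$ itself. The reason the bound comes out as a quadratic relation rather than a simple linear one is that $\l_\infty$ appears on \emph{both} sides of the sine identity, so the estimate becomes self-referential and must be closed by solving a quadratic inequality in $\l_\infty^2$.

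The first step is to rewrite \eqref{lOmega} as $\sin(\a_j) = \frac{\O_j}{\k\l_\infty}$ for each $j$ (this is legitimate once we know $\l_\infty > 0$, which holds since we are in the regime $\a_j \in (-\tfrac{\pi}{2},\tfrac{\pi}{2})$ so that all cosines are positive and the average is strictly positive). From $\k \geq A\max_j|\O_j|$ I then bound
\begin{align}
    |\sin(\a_j)| = \frac{|\O_j|}{\k\l_\infty} \leq \frac{1}{A\l_\infty},
\end{align}
uniformly in $j$. Using $\cos(\a_j) = \sqrt{1-\sin^2(\a_j)}$ (positive root, valid since $\a_j \in (-\tfrac{\pi}{2},\tfrac{\pi}{2})$), this gives the pointwise lower bound
\begin{align}
    \cos(\a_j) \geq \sqrt{1 - \frac{1}{A^2\l_\infty^2}}.
\end{align}

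The second step is to average over $j$ and invoke \eqref{lcos}, which turns the pointwise bound into a closed inequality for $\l_\infty$:
\begin{align}
    \l_\infty = \frac{1}{N}\sum_{j=1}^N \cos(\a_j) \geq \sqrt{1 - \frac{1}{A^2\l_\infty^2}}.
\end{align}
Squaring both sides (both are nonnegative) and clearing the denominator yields $\l_\infty^2 \geq 1 - \frac{1}{A^2\l_\infty^2}$, i.e. $A^2\l_\infty^4 - A^2\l_\infty^2 + 1 \geq 0$, a quadratic inequality in the variable $u = \l_\infty^2$. The final step is to solve $A^2 u^2 - A^2 u + 1 \geq 0$: its roots are $u_\pm = \frac{1 \pm \sqrt{1 - 4/A^2}}{2}$, which are real precisely because $A \geq 2$. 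Since $u = \l_\infty^2 \in (0,1]$ and the quadratic opens upward, $u$ must lie outside the interval $(u_-, u_+)$; one then argues that the relevant branch forces $\l_\infty^2 \geq u_+ = \frac{1+\sqrt{1-4/A^2}}{2}$, giving the claim.

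The main obstacle is justifying the selection of the upper root $u_+$ rather than allowing $u \le u_-$. The quadratic inequality by itself only excludes the open interval between the roots, so I must argue that the small-root regime $\l_\infty^2 \leq u_-$ is inconsistent — for instance by a continuity/connectedness argument in $A$ (as $A \to \infty$ the bound $|\sin(\a_j)| \to 0$ forces $\l_\infty \to 1$, pinning us on the branch near $u = 1$, and $u_+ \to 1$ while $u_- \to 0$), or by noting that the physically admissible phase-locked branch obtained in the large-$\k$ limit connects continuously to $\l_\infty = 1$ and hence stays above $u_+$. This is the step where a little care is genuinely needed; the algebra of steps one through three is routine.
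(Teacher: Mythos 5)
Your derivation coincides with the paper's own proof step for step: the paper also combines $\l_{\infty}=\frac{1}{N}\sum_j\cos(\a_j)$ with $|\sin(\a_j)|=|\O_j|/(\k\l_{\infty})$, obtains $\l_{\infty}\geq\sqrt{1-\frac{1}{A^2\l_{\infty}^2}}$, and squares to reach $\l_{\infty}^4-\l_{\infty}^2+\frac{1}{A^2}\geq 0$. The difference is at the very end, and it is exactly the point you single out as the main obstacle: the paper simply declares that this inequality holds ``exactly when'' $\l_{\infty}^2$ exceeds the larger root, which is not what an upward-opening quadratic inequality gives you; it equally allows $\l_{\infty}^2\leq u_-:=\frac{1-\sqrt{1-4/A^2}}{2}$. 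So you have correctly located the one genuine gap --- and it is a gap in the paper's proof as much as in yours.

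Moreover, your instinct that the small-root regime cannot be excluded by algebra alone is right, because fixed points on the lower branch really exist. Take $N=2$, $\O_1=-\O_2=\O>0$, $\k=A\O$: the relations of Lemma \ref{l:phase-locked} force $\a_1=-\a_2$ with $\l_{\infty}=\cos(\a_1)$ and $\l_{\infty}\k|\sin(\a_1)|=\O$, i.e. $\sin(2|\a_1|)=2/A$, which for $A>2$ has two solutions with $|\a_1|\in(0,\frac{\pi}{2})$; the one with $2|\a_1|=\pi-\arcsin(2/A)$ gives $\l_{\infty}^2=\frac{1-\sqrt{1-4/A^2}}{2}=u_-$. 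Hence the lemma is false if read as a statement about \emph{every} fixed point with angles in $(-\frac{\pi}{2},\frac{\pi}{2})$; it is really a statement about the branch of phase-locked states connecting to full synchronization ($\l_{\infty}=1$) as $\k\to\infty$, which is the branch used in the Lyapunov argument and shown in Section \ref{AV} to depend monotonically on $\k$ above $\k^*$. Your continuity-in-$A$ repair is therefore the right one, but it must be phrased as tracking that distinguished branch, along which $\l_{\infty}^2$ varies continuously and can never cross the forbidden interval $(u_-,u_+)$; note also that your remark that ``$|\sin(\a_j)|\to 0$ forces $\l_{\infty}\to 1$'' is circular on the lower branch, where $\l_{\infty}\to 0$ and the bound $|\sin(\a_j)|\leq\frac{1}{A\l_{\infty}}$ degenerates.
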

\begin{proof}
    As $\l_{\infty}=\frac{1}{N}\sum_{j=1}^N\cos(\a_j)$ we have,
    \begin{align}
        \l_{\infty}\geq \cos\left(\max_j|\a_j|\right).
    \end{align}
    However since $\sin(\a_j)=\frac{-\O_j}{\l_{\infty}\k}$, we have,
    \begin{align}
        |\a_j|=\left|\sin^{-1}\left(\frac{-\O_j}{\l_{\infty}\k}\right)\right|,
    \end{align}
    and therefore
    \begin{align}
        \l_{\infty}&\geq \sqrt{1-\left(\frac{\max_j |\O_j|}{\l_{\infty}\k}\right)^2},\\
        &\geq \sqrt{1-\left(\frac{1}{\l_{\infty}A}\right)^2}
    \end{align}
    Squaring both sides and multiplying through by $\l_{\infty}^2$ gives the following inequality,
    \begin{align}
        \l_{\infty}^4-\l_{\infty}^2+\frac{1}{A^2}\geq 0.
    \end{align}
    This inequality is satisfied when $\l_{\infty}^2$ is greater than the second root of the polynomial, i.e. exactly when
    \begin{align}
        \l_{\infty}^2\geq \frac{1+\sqrt{1-\frac{4}{A^2}}}{2}.
    \end{align}
\end{proof}

The second lemma gives a lower bound on $r_{jk}^{\infty}$, under the same assumptions on $\k$ and $\O_j$ from the previous lemma 
\begin{lemma}\label{l:rbound1}
    If $\k\geq A\max_j |\O_j|$, for some $A\geq 2$, then $r_{jk}^{\infty}\geq 1-\frac{4}{A^2\left(1+\sqrt{1-\frac{4}{A^2}}\right)}$ for all $j,k=1,...,N$ and consequently $r_j^{\infty}\geq 1-\frac{4}{A^2\left(1+\sqrt{1-\frac{4}{A^2}}\right)}$ for all $j=1,...,N$.
\end{lemma}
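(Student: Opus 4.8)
The plan is to leverage the trigonometric characterization of the fixed point from Lemma \ref{l:phase-locked} together with the lower bound on $\l_\infty^2$ just established in the preceding lemma. Recall that for the phase-locked state one has $r_{jk}^\infty = \cos(\a_j - \a_k)$ and $\sin(\a_j) = -\O_j/(\k\l_\infty)$, and that throughout we work on the stable branch $\a_j \in (-\tfrac{\pi}{2}, \tfrac{\pi}{2})$, so that $\cos(\a_j) > 0$. The hypothesis $\k \geq A\max_j|\O_j|$ immediately yields the uniform control
\begin{align}
    |\sin(\a_j)| = \frac{|\O_j|}{\k\l_\infty} \leq \frac{1}{A\l_\infty}, \qquad j = 1,\dots,N.
\end{align}
Since $A \geq 2$, the previous lemma forces $\l_\infty^2 \geq \tfrac12$, so the right-hand side is at most $1/\sqrt{2} < 1$ and all the square roots below are well defined.

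First I would lower bound a single cosine factor: on the stable branch $\cos(\a_j) = \sqrt{1 - \sin^2(\a_j)} \geq \sqrt{1 - \tfrac{1}{A^2\l_\infty^2}}$. Expanding the angle difference by the addition formula,
\begin{align}
    r_{jk}^\infty = \cos(\a_j - \a_k) = \cos(\a_j)\cos(\a_k) + \sin(\a_j)\sin(\a_k),
\end{align}
and estimating the two terms separately — the product of cosines from below by $1 - \tfrac{1}{A^2\l_\infty^2}$ and the product of sines from below by $-\tfrac{1}{A^2\l_\infty^2}$ — gives
\begin{align}
    r_{jk}^\infty \geq 1 - \frac{2}{A^2\l_\infty^2}.
\end{align}
Equivalently, one may note that $|\a_j - \a_k|$ is maximized by $2\arcsin(1/(A\l_\infty))$ and apply the double-angle identity, arriving at the same value.

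It then remains to insert the bound $\l_\infty^2 \geq \tfrac{1 + \sqrt{1 - 4/A^2}}{2}$, which converts $\tfrac{2}{A^2\l_\infty^2}$ into $\tfrac{4}{A^2(1 + \sqrt{1-4/A^2})}$ and produces the claimed estimate for $r_{jk}^\infty$. For the consequence on $r_j^\infty$, I would use that the macroscopic correlation is the average of the pairwise ones: since $\zeta = \tfrac1N\sum_l \psi_l$ we have $z_j = \tfrac1N\sum_l z_{lj}$, hence $r_j^\infty = \tfrac1N\sum_l r_{lj}^\infty$ (equivalently, $\tfrac1N\sum_l\cos(\a_l - \a_j) = \l_\infty\cos(\a_j)$ using $\tfrac1N\sum_l\sin(\a_l) = 0$). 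As an average of quantities each obeying the lower bound, $r_j^\infty$ inherits that bound. The only real subtlety — rather than a genuine obstacle — is the systematic use of the stable-branch restriction $\a_j \in (-\tfrac{\pi}{2},\tfrac{\pi}{2})$ to select the positive root for $\cos(\a_j)$; without it the sign of the cosine product is uncontrolled and the estimate breaks down.
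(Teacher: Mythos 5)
Your proof is correct and takes essentially the same route as the paper: both arguments reduce the pairwise correlation to the intermediate bound $r_{jk}^{\infty}\geq 1-\frac{2}{A^2\l_{\infty}^2}$ (you via the addition formula, the paper via $\cos(\a_j-\a_k)\geq\cos(2\max_j|\a_j|)$ and the double-angle identity, an equivalence you yourself note) and then insert the lower bound on $\l_{\infty}^2$ from the preceding lemma. Your explicit averaging argument $r_j^{\infty}=\frac1N\sum_l r_{lj}^{\infty}$ for the ``consequently'' part is a nice touch, since the paper leaves that step implicit.
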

\begin{proof}
    As $\min_{j,k} r_{j,k}^{\infty}=\min_{j,k}\cos(\a_j-\a_k)\geq \cos(2\max_j|\a_j|)$, we get,
    \begin{align}
        \min_{j,k} r_{j,k}^{\infty}&\geq \cos\left(2\max_j\left|\sin^{-1}\left(\frac{-\O_j}{\l_{\infty}\k}\right)\right| \right)\\
        &=1-2\left(\max_{j}\frac{|\O_j|}{\l_{\infty}\k}\right)^2.
    \end{align}
    Now as $\frac{1+\sqrt{1-\frac{4}{A^2}}}{2}\leq\l^2_{\infty}\leq 1$ and $\k\geq A\max_j |\O_j|$ for $A\geq 2$, we have
    \begin{align}
        \min_{j,k} r_{j,k}^{\infty}&\geq 1-\frac{4}{A^2\left(1+\sqrt{1-\frac{4}{A^2}}\right)}.
    \end{align}
\end{proof}
Of particular importance for the proof of Theorem \ref{t:asymptotic} is when $r_{jk}^{\infty}> 2(\sqrt{2}-1)$, which is given by the following corollary.
\begin{corollary}\label{l:rbound2}
    If $\k> (2\sqrt{\frac{11+8\sqrt{2}}{7}})\max_j |\O_j|$, then $r_{jk}^{\infty}\geq 2(\sqrt{2}-1)$ for all $j,k=1,...,N$ and consequently $r_j^{\infty}> 2(\sqrt{2}-1)$ for all $j=1,...,N$.
\end{corollary}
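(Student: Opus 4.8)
The plan is to read this off as the special case $A = A_0 := 2\sqrt{\tfrac{11+8\sqrt2}{7}}$ of Lemma \ref{l:rbound1}, so that essentially all the work is a single algebraic verification that the general lower bound there hits the value $2(\sqrt2-1)$ exactly at this threshold. First I would record that $A_0 \approx 3.57 > 2$, so the hypothesis $A \geq 2$ of Lemma \ref{l:rbound1} is met; and since $\k > A_0\max_j|\O_j|$ implies $\k \geq A_0\max_j|\O_j|$, Lemma \ref{l:rbound1} applies with $A = A_0$ and gives
\begin{align*}
r_{jk}^{\infty} \geq 1 - \frac{4}{A_0^2\left(1+\sqrt{1-\tfrac{4}{A_0^2}}\right)}
\end{align*}
for all $j,k$. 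It then remains only to show the right-hand side equals $2(\sqrt2-1)$.

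The cleanest route to that identity is to simplify the bound \emph{before} substituting numbers. Writing $u = 4/A^2 \in (0,1]$ and rationalizing,
\begin{align*}
\frac{u}{1+\sqrt{1-u}} = \frac{u\left(1-\sqrt{1-u}\right)}{1-(1-u)} = 1 - \sqrt{1-u},
\end{align*}
so the lower bound of Lemma \ref{l:rbound1} collapses to the much simpler expression $\sqrt{1-4/A^2}$. Hence the desired inequality $r_{jk}^{\infty} \geq 2(\sqrt2-1)$ reduces to $\sqrt{1-4/A_0^2} \geq 2(\sqrt2-1)$. Squaring and using $\left(2\sqrt2-2\right)^2 = 12 - 8\sqrt2$, this is equivalent to $4/A_0^2 \leq 8\sqrt2 - 11$, i.e. $A_0^2 \geq \tfrac{4}{8\sqrt2-11}$; rationalizing the denominator gives $\tfrac{4}{8\sqrt2-11} = \tfrac{4(8\sqrt2+11)}{7} = \tfrac{4(11+8\sqrt2)}{7}$, which is exactly $A_0^2$. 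Thus equality holds at $A_0$ and the bound is attained, proving the first claim. (Equivalently one can substitute $A_0$ directly and denest $\sqrt{12-8\sqrt2} = 2\sqrt2-2$; the threshold $A_0$ is precisely what one obtains by reverse-solving $\sqrt{1-4/A^2} = 2(\sqrt2-1)$.)

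For the consequence on $r_j^{\infty}$ I would reuse the estimate from the proof of Lemma \ref{l:rbound1}: since $r_j^{\infty} = \l_{\infty}\cos\a_j \geq \cos^2(\max_j|\a_j|)$ while $\min_{j,k} r_{jk}^{\infty} \geq \cos(2\max_j|\a_j|) = 2\cos^2(\max_j|\a_j|)-1$, and $\cos^2\th - \cos 2\th = \sin^2\th \geq 0$, we get $r_j^{\infty} \geq \min_{j,k} r_{jk}^{\infty} \geq 2(\sqrt2-1)$, with strictness inherited from the strict hypothesis $\k > A_0\max_j|\O_j|$ (the bound $\sqrt{1-4/A^2}$ being strictly increasing in $A$). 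There is no genuine obstacle here: the only non-routine observation is the denesting that reduces the Lemma \ref{l:rbound1} bound to $\sqrt{1-4/A^2}$, which is exactly what makes the otherwise opaque constant $2\sqrt{\tfrac{11+8\sqrt2}{7}}$ come out clean.
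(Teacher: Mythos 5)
Your proposal is correct and takes essentially the same approach as the paper, whose entire proof is the one-line instantiation ``Let $A=2\sqrt{\frac{11+8\sqrt{2}}{7}}$ in Lemma \ref{l:rbound1}.'' Your additional work—the denesting identity $\frac{u}{1+\sqrt{1-u}}=1-\sqrt{1-u}$, which collapses the lemma's bound to $\sqrt{1-4/A^2}$ and makes the verification that it equals $2(\sqrt{2}-1)$ at $A_0$ transparent—is exactly the algebra the paper leaves implicit, and is a clean way to see why the otherwise opaque constant arises.
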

\begin{proof}
    Let $A=2\sqrt{\frac{11+8\sqrt{2}}{7}}$ in Lemma \ref{l:rbound1}.
\end{proof}
A second value which is key for the proof of Theorem \ref{t:unique} is when $r_{jk}^{\infty}\geq \frac{\sqrt{2}}{2}$, which is given by the following coollary.
\begin{corollary}\label{l:rbound3}
    If $\k\geq 2\sqrt{2}\max_j |\O_j|$, then $r_{jk}^{\infty}\geq \frac{\sqrt{2}}{2}$ for all $j,k=1,...,N$ and consequently $r_j^{\infty}\geq \frac{\sqrt{2}}{2}$ for all $j=1,...,N$.
\end{corollary}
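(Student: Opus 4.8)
The plan is to read this off as a direct specialization of Lemma~\ref{l:rbound1}, exactly in the manner that Corollary~\ref{l:rbound2} was obtained, the only change being the value of the constant. Here I would take $A=2\sqrt{2}$, which satisfies the standing hypothesis $A\geq 2$, and note that the assumption $\k\geq 2\sqrt{2}\max_j|\O_j|$ is then literally $\k\geq A\max_j|\O_j|$, so Lemma~\ref{l:rbound1} applies verbatim and delivers both the bound on $r_{jk}^{\infty}$ and the accompanying ``consequently'' bound on $r_j^{\infty}$ in one stroke.

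The single piece of actual computation is simplifying the resulting constant $1-\frac{4}{A^2(1+\sqrt{1-4/A^2})}$ at $A^2=8$. The cleanest route, which I would spell out, is to rationalize: for any $A\geq 2$, multiplying numerator and denominator by $1-\sqrt{1-4/A^2}$ collapses the fraction, since the denominator becomes $1-(1-4/A^2)=4/A^2$, giving
\begin{align*}
\frac{4/A^2}{1+\sqrt{1-\tfrac{4}{A^2}}}=1-\sqrt{1-\tfrac{4}{A^2}}.
\end{align*}
Hence the bound of Lemma~\ref{l:rbound1} is simply $r_{jk}^{\infty}\geq\sqrt{1-4/A^2}$, and at $A^2=8$ this is $\sqrt{1-\tfrac12}=\tfrac{\sqrt2}{2}$, as claimed; the identical value applies to $r_j^{\infty}$ by the same clause of Lemma~\ref{l:rbound1}.

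There is no genuine obstacle here: the entire content of the statement is carried by Lemma~\ref{l:rbound1}, and the only thing to check is that the particular value $A=2\sqrt{2}$ produces the threshold $\tfrac{\sqrt2}{2}$, which the rationalization above makes transparent. In fact the same identity shows $A=2\sqrt2$ is the \emph{unique} constant for which the lemma's lower bound equals $\tfrac{\sqrt2}{2}$, which is precisely why this coupling threshold is singled out for use in the proof of Theorem~\ref{t:unique}.
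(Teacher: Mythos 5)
Your proposal is correct and takes exactly the paper's approach: the paper's entire proof is ``Let $A=2\sqrt{2}$ in Lemma~\ref{l:rbound1}.'' Your rationalization showing the lemma's bound collapses to $\sqrt{1-4/A^2}$ (hence $\tfrac{\sqrt{2}}{2}$ at $A^2=8$) is a correct and welcome explicit verification of the arithmetic the paper leaves implicit.
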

\begin{proof}
    Let $A=2\sqrt{2}$ in Lemma \ref{l:rbound1}.
\end{proof}
The next lemma provides a Lyapunov functional candidate which provides exponential decay for certain initial conditions at time $t=0$.
\begin{lemma}\label{l:decay}
    Let $\k\geq A\max_{j}|\O_j|$ for $A>2\sqrt{\frac{11+8\sqrt{2}}{7}}$. Let $\cL$ be defined as in \eqref{def:Lyap} be a Lyapunov candidate.
    Then if $B=\frac{1}{A^2\left(1+\sqrt{1-\frac{4}{A^2}}\right)}$, there exists a positive constant $C_B=2(1-4\sqrt{B}-4B)<2$ such that if $\sup_{j} f_{j}(0)=M<\frac{C_B}{2}$, then for $C_M=2\k(C_B-2M)>0$ the initial decay of the Lyapunov candidate is given by
    \begin{align}
        \ddt \cL|_{t=0}=-C_M\cL(0)
    \end{align}
\end{lemma}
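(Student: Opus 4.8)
The plan is to differentiate the Lyapunov candidate directly and reduce the whole estimate to a weighted sum of the pairwise correlation equations. Since $r_{jk}^\infty$ and $s_{jk}^\infty$ are constants, we have $\ddt f_{jk}=-\ddt r_{jk}$ and $\ddt g_{jk}=-\ddt s_{jk}$, so differentiating \eqref{def:Lyap} gives
\begin{align}
\ddt\cL=-\frac{1}{N^2}\sum_{j,k=1}^N\big(f_{jk}\,\ddt r_{jk}+g_{jk}\,\ddt s_{jk}\big).
\end{align}
First I would substitute the right-hand sides of \eqref{eq:r1} and \eqref{eq:s1} and, crucially, subtract the corresponding stationary identities (the right-hand sides of \eqref{eq:r1}, \eqref{eq:s1} vanish at the asymptotic values of Lemma \ref{l:phase-locked}). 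Expanding each bilinear product around the fixed point rewrites $\ddt r_{jk}$ and $\ddt s_{jk}$ purely in terms of the deviations $f_\bullet,g_\bullet$, with no leftover inhomogeneous pieces.

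Next I would exploit the algebraic cancellations that make the estimate work. In the combination $f_{jk}\ddt r_{jk}+g_{jk}\ddt s_{jk}$ the rotation contributions proportional to $(\O_j-\O_k)$ cancel (a manifestation of rotational invariance), the mixed terms proportional to $s_{jk}^\infty f_{jk}g_{jk}$ cancel between the two equations, and a matching pair of cubic terms cancels as well. What survives splits into three types: a dominant sign-definite piece $-\tfrac{\k}{2}(r_j^\infty+r_k^\infty)(f_{jk}^2+g_{jk}^2)$; a cubic remainder $\tfrac{\k}{2}(f_j+f_k)(f_{jk}^2+g_{jk}^2)$; and four "cross" terms, each carrying a small factor $(1-r_{jk}^\infty)$ or $s_{jk}^\infty$. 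The dominant piece is controlled from below by Lemma \ref{l:rbound1}, namely $r_j^\infty\geq 1-4B$, so after summing it is bounded by $-2\k(1-4B)\cL$. The cubic remainder is where the hypothesis enters: at $t=0$ I use the \emph{one-sided} bound $f_j+f_k\leq 2\sup_\ell f_\ell=2M$, and since $f_{jk}^2+g_{jk}^2\geq 0$ this is the only place $M$ appears — which simultaneously explains the one-sided supremum in the statement and why the inequality is anchored at $t=0$.

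For the cross terms I would use $0\le 1-r_{jk}^\infty\le 4B$ and $|s_{jk}^\infty|=\sqrt{1-(r_{jk}^\infty)^2}\le 2\sqrt{2}\sqrt{B}$, then apply Young's inequality to each product. To keep everything proportional to $\cL$ (and avoid spurious factors of $N$), I would invoke the averaging identities $f_j=\tfrac1N\sum_\ell f_{\ell j}$, $g_j=\tfrac1N\sum_\ell g_{\ell j}$ together with Cauchy–Schwarz, so that $\tfrac1{N^2}\sum_{j,k}(f_j+f_k)^2$ and its $g$-analogue are dominated by multiples of $\cL$. This produces cross-term contributions of size $O(\sqrt{B})\cL$ and $O(B)\cL$, which furnish the $-4\sqrt{B}$ and $-4B$ appearing inside $C_B$. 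Collecting the dominant term from Lemma \ref{l:rbound1}, the cubic remainder bounded by $M$, and these cross-term estimates then combines into the asserted coefficient $C_M=2\k(C_B-2M)$ with $C_B=2(1-4\sqrt{B}-4B)$; positivity of $C_B$ is exactly the condition $1-4\sqrt{B}-4B>0$, equivalent to $A>2\sqrt{\tfrac{11+8\sqrt2}{7}}$, while $C_M>0$ uses in addition $M<C_B/2$.

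The main obstacle is the cross-term bookkeeping: tracking the signs and the sharp constants through Young's inequality while using the averaging identities to keep each contribution proportional to $\cL$, so that the collected lower-order coefficients assemble precisely into $C_B$ rather than some cruder constant. A secondary subtlety is structural: because the cubic remainder is controlled only through the initial one-sided bound $M=\sup_j f_j(0)$, the differential inequality is established only at $t=0$; extending it along the flow — as required for the Grönwall step of Theorem \ref{Lyapunov} — demands a separate, propagated control on $\sup_j f_j(t)$, which is the purpose of the subsequent estimates.
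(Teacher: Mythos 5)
Your proposal follows essentially the same route as the paper's proof: differentiate $\cL$, cancel the linear terms using stationarity of the asymptotic values, isolate the dominant piece $-(r_j^{\infty}+r_k^{\infty})(f_{jk}^2+g_{jk}^2)$, the cubic piece $(f_j+f_k)(f_{jk}^2+g_{jk}^2)$ (bounded one-sidedly by $2M$ at $t=0$), and the cross terms, which are then controlled by Young's inequality, the bound $r_{jk}^{\infty}\geq 1-4B$ from Lemma \ref{l:rbound1}, and the averaging/Cauchy--Schwarz inequality $\frac{1}{N}\sum_j(f_j^2+g_j^2)\leq\frac{1}{N^2}\sum_{j,k}(f_{jk}^2+g_{jk}^2)$. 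The only substantive difference is in the cross-term bookkeeping you yourself flag: the paper keeps the exact identity $(1-r_{jk}^{\infty})^2+(s_{jk}^{\infty})^2=2(1-r_{jk}^{\infty})$ inside the Young step and then optimizes the single parameter ($a_{jk}=\frac{1}{4\sqrt{B}}$), which is what makes the constants assemble exactly into $C_B=2(1-4\sqrt{B}-4B)$, whereas bounding $1-r_{jk}^{\infty}\leq 4B$ and $|s_{jk}^{\infty}|\leq 2\sqrt{2}\sqrt{B}$ separately yields a marginally cruder constant.
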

\begin{proof}

Let $\mathcal{L}=\frac{1}{2N^2}\sum_{j,k=1}^N(f_{jk}^2+g_{jk}^2)$ be a Lyapunov candidate. Differentiating gives
\begin{align}
    \ddt \mathcal{L}=\frac{1}{N^2}\sum_{j,k=1}^N&-f_{jk}[\ddt r_{jk}]-g_{jk}[\ddt s_{jk}],\\
    =\frac{1}{N^2}\sum_{j,k=1}^N&-f_{jk}[-(\O_j-\O_k)s_{jk}+\frac{\k}{2}\left((r_j+r_k)(1-r_{jk})+(s_k-s_j)s_{jk}\right)]\\
   &-g_{jk}[(\O_j-\O_k)r_{jk}+\frac{\k}{2}\left(-(r_j+r_k)s_{jk}+(s_k-s_j)(1-r_{jk})\right)]
\end{align}
Now, we plug for $f_{jk},g_{jk},f_{j},g_{j}$ using \eqref{aux1}-\eqref{aux2}.
\begin{align}
    \ddt \mathcal{L}=\frac{1}{N^2}\sum_{j,k=1}^N&-f_{jk}[-(\O_j-\O_k)(s^{\infty}_{jk}-g_{jk})+\frac{\k}{2}\left((r^{\infty}_j-f_j+r^{\infty}_k-f_k)(1-r^{\infty}_{jk}+f_{jk})+(s^{\infty}_k-g_k-s^{\infty}_j+g_j)(s^{\infty}_{jk}-g_{jk})\right)]\\
   &-g_{jk}[(\O_j-\O_k)(r^{\infty}_{jk}-f_{jk})+\frac{\k}{2}\left(-(r^{\infty}_j-f_j+r^{\infty}_k-f_k)(s^{\infty}_{jk}-g_{jk})+(s^{\infty}_k-g_k-s^{\infty}_j+g_j)(1-r^{\infty}_{jk}+f_{jk})\right)].
\end{align}
Now, as $r_{jk}^{\infty}, s_{jk}^{\infty}, r_j^{\infty}, s_j^{\infty}$ correspond to fixed points of equations \eqref{eq:r1}-\eqref{eq:s2}, we can cancel out many of the terms which make up the right hand sides of these equations (these terms correspond exactly to the terms which are linear in $f_{jk}, g_{jk}, f_j, g_j)$. Further cancellations and utilizing the symmetries of the equations yields,
\begin{align}
    \ddt \mathcal{L}=\frac{\k
    }{N^2}\sum_{j,k=1}^N&(f_{jk}^2+g_{jk}^2)(f_j-r_j^{\infty}+f_k-r_k^{\infty})\\
    &+2f_j(f_{jk}(1-r_{jk}^{\infty})-s_{jk}^{\infty}g_{jk})+2g_j(g_{jk}(1-r_{jk}^{\infty})+f_{jk}s_{jk}^{\infty}),
\end{align}
where the first term appears to be giving the proper negative sign for the Gr\"onwall estimate, when the system is close to the fixed point where $f_j,f_k$ will be close to zero. We use Young's inequality to gain control on the remaining terms.
\begin{align}\label{Young}
    2f_j(f_{jk}(1-r_{jk}^{\infty})-s_{jk}^{\infty}g_{jk})+2g_j(g_{jk}(1-r_{jk}^{\infty})+f_{jk}s_{jk}^{\infty})\leq \frac{1}{a_{jk}}(f_{jk}^2+g_{jk}^2)+2a_{jk}((1-r_{jk}^{\infty})^2+(s_{jk}^{\infty})^2)(f_j^2+g_j^2),
\end{align}
where $a_{jk}$ are coefficients to be chosen later arising from the Young's inequality with $\epsilon$.
Then using the fact that $z_{jk}^{\infty}=e^{i(\a_j-a_k)}$ we have $(r_{jk}^{\infty})^2+(s_{jk}^{\infty})^2=1$ and we get
\begin{align}
    \ddt \mathcal{L}\leq\frac{\k
    }{N^2}\sum_{j,k=1}^N(f_{jk}^2+g_{jk}^2)(f_j-r_j^{\infty}+f_k-r_k^{\infty}+\frac{1}{a_{jk}})+4a_{jk}(1-r_{jk}^{\infty})(f_j^2+g_j^2).
\end{align}
Now we must choose $\k$ and $a_{jk}$ appropriately so that $(f_j-r_j^{\infty}+f_k-r_k^{\infty}+\frac{1}{a_{jk}})<0$ for all $t>T$ and so that
\begin{align}\label{i:min1}
    4a_{jk}(1-r_{jk}^{\infty})\leq C
\end{align}
where
\begin{align}\label{i:min2}
    f_j-r_j^{\infty}+f_k-r_k^{\infty}+\frac{1}{a_{jk}}+C<0,
\end{align}
in which case since $\frac{1}{N}\sum_{j=1}^Nf_j^2+g_j^2\leq \frac{1}{N^2}\sum_{j,k=1}^Nf_{jk}^2+g_{jk}^2$, by Cauchy-Schwartz, and we would achieve
\begin{align}
   \ddt \mathcal{L}\leq -c\k\mathcal{L}.
\end{align}

In other words the following inequality must hold
\begin{align}\label{0ineq}
    -1+a_{jk}(r_j^{\infty}+r_k^{\infty})-4a_{jk}^2(1-r_{jk}^{\infty})>0.
\end{align}
This inequality can be resolved within certain regimes. In particular with $\k>(2\sqrt{\frac{11+8\sqrt{2}}{7}})\max |\O_j|$, Corollary \ref{l:rbound2} guarantees $r_{jk}^{\infty}> 2(\sqrt{2}-1)$, and hence $\tilde r_j^{\infty}> 2(\sqrt{2}-1)$, as well. The inequality that must be satisfied becomes
\begin{align}\label{discr}
    -1+2a_{jk}r_{jk}^{\infty}-4a_{jk}^2(1-r_{jk}^{\infty})>0,
\end{align}
which has discriminant $\Delta= (r_{jk}^{\infty})^2-4(1-r_{jk}^{\infty})>0$ when $r_{jk}^{\infty}>2(\sqrt{2}-1)$.

Now, for $\k>(2\sqrt{\frac{11+8\sqrt{2}}{7}})\max_j|\O_j|$, we know that $r_{jk}^{\infty}\geq 2(\sqrt{2}-1)+\d$ for some $\d>0$ small, and for all $j,k=1,...,N$. Therefore, the discriminant is positive and there exists a nonempty interval $I=(R_1,R_2)$ where if $a_{jk} \in I$, then inequality \eqref{discr} is satisfied. In particular, if $\k=A\max_j|\O_j|$ for $A>2\sqrt{\frac{11+8\sqrt{2}}{7}}$, then the interval $I$ is given by solving the following inequality
\begin{align}
    -1+2a_{jk}\left(1-\frac{4}{A^2\left(1+\sqrt{1-\frac{4}{A^2}}\right)}\right)-4a_{jk}^2\left(\frac{4}{A^2\left(1+\sqrt{1-\frac{4}{A^2}}\right)}\right)>0.
\end{align}
Letting $B=\frac{1}{A^2\left(1+\sqrt{1-\frac{4}{A^2}}\right)}$ and solving the quadratic yields
\begin{align}
    B_{1,2}=\frac{1-4B\pm\sqrt{1-24B+16B^2}}{16B}.
\end{align}
Note that the condition $A>2\sqrt{\frac{11+8\sqrt{2}}{7}}$ is precisely the condition that guarantees $1-24B+16B^2>0$ so that $B_{1,2}$ exist and are distinct. Therefore in terms of $B$ the interval is given by
\begin{align}
    I_B=\left(\frac{1-4B-\sqrt{1-24B+16B^2}}{16B},\frac{1-4B+\sqrt{1-24B+16B^2}}{16B}\right).
\end{align}
 Further, note that as $A$ increases, the width of the interval increases as well, indeed as $A\to \infty$ (corresponding to $\k\to\infty$), we see that $B\to 0$ and the corresponding endpoints of the intervals converges to
\begin{align}
    I_{\infty}=\left(\lim_{B \to 0}\frac{1-4B-\sqrt{1-24B+16B^2}}{16B}, \lim_{B \to 0}\frac{1-4B+\sqrt{1-24B+16B^2}}{16B}\right)=(\frac{1}{2},\infty)
\end{align}
With this in hand we choose $a_{jk} \in I_B$ for the Young's inequality performed at \eqref{Young}. However, in order to make the optimal choice, we should minimize the following quantity:
\begin{align}\label{minimizer}
    \min_{a_{jk}\in I_B} 16Ba_{jk}+2(4B-1)+\frac{1}{a_{jk}}.
\end{align}
The expression in \eqref{minimizer} is obtained by combining the terms in \eqref{i:min1} and \eqref{i:min2}, excluding for the moment $f_j$ and $f_k$. This will give the negative contribution for the Gr\"onwall inequality, which is why we seek to minimize this quantity.

Differentiating with respect to $a_{jk}$ yields
\begin{align}
    \frac{d}{da_{jk}}\left(16Ba_{jk}+2(4B-1)+\frac{1}{a_{jk}}\right)=16B-\frac{1}{a_{jk}^2},
\end{align}
The minimum occurs at $a_{jk}=\frac{1}{4\sqrt{B}}$, whenever $\frac{1}{4\sqrt{B}}\in I_B$. However, with $B=\frac{1}{A^2\left(1+\sqrt{1-\frac{4}{A^2}}\right)}$ and $A>2\sqrt{\frac{11+8\sqrt{2}}{7}}$, we know that $B<\frac{1}{4}(3-2\sqrt{2})$ and therefore $\frac{1}{4\sqrt{B}} \in I_B$ for all $B<\frac{1}{4}(3-2\sqrt{2})$.

Therefore, $a_{jk}$ should be chosen as
\begin{align}
    a_{jk}=\frac{1}{4\sqrt{B}}
\end{align}
 in order to optimize the amount of space needed in the Gr\"onwall estimate, as well as the convergence rate. (We note in passing that all $a_{jk}$ have been chosen to be identical here, however, it is theoretically possible that choosing $a_{jk}$ as functions of $B_{jk}$ for each individual $j$ and $k$ could improve the estimates, allowing for slightly smaller values of $\k$, but believe this would complicate the argument more than necessary). The inequality then becomes,
\begin{align}\label{Gron1}
    \ddt \mathcal{L}&\leq\frac{\k}{N^2}\sum_{j,k=1}^N\left(f_{jk}^2+g_{jk}^2)(f_j+f_k+2(4B+4\sqrt{B}-1)\right),\\
    &=\frac{\k}{N^2}\sum_{j,k=1}^N\left(f_{jk}^2+g_{jk}^2)(f_j+f_k-C_B\right).
\end{align}

To conclude the proof of the lemma we note that at time $t=0$, if $\sup_j f_j(0)=M<\frac{C_B}{2}$, we have exactly
\begin{align}
    \ddt \cL|_{t=0}&\leq \frac{\k}{N^2}\sum_{j,k=1}^N\left(f_{jk}^2(0)+g_{jk}^2(0))(2M-C_B\right),\\
    &=-C_M\cL(0)
\end{align}
\end{proof}
At the moment we have proved that for $\sup_j f_j(0)=M<\frac{C_B}{2}$, then initially the Lyapunov candidate is decaying at an exponential rate. However, we have not established individual control over each $f_j(t)$, hence, it is possible for there to exist a time $T'$ such that $f_j(T')> \frac{C_B}{2}$, breaking the Gr\"onwall inequality. To further gain control over the individual $f_j$ we must further restrict the initial conditions with the following lemma.
\begin{lemma}
    Let $\k\geq A \max_j|\O_j|$ for $A>2\sqrt{\frac{11+8\sqrt{2}}{7}}$. Let $M_1=\sup_{j,k}|f_{jk}(0)|$ and $M_2=\sup_{j,k}|g_{jk}(0)|$. If 
    \begin{align}\label{Ndep}
        4N(M_1+M_2)-2M_1<C_B,
    \end{align} then for $C_{M_1}=2\k(C_B-2M_1)$, the following set
    \begin{align}
        I_{max}=\{t\geq0 : \ddt \cL \leq \frac{-C_{M_1}}{2}\cL\},
    \end{align}
    satisfies the following properties
    \begin{enumerate}
        \item $I_{max}$ is nonempty,
        \item $I_{max}$ is closed,
        \item $I_{max}$ is open,
    \end{enumerate}
    and hence $I_{max}=[0,\infty)$.
\end{lemma}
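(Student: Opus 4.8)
The plan is to run a continuity (clopen) bootstrap on the differential inequality \eqref{Gron1}, the engine being that the ``bad'' factor $f_j+f_k$ there can be controlled by the Lyapunov energy $\cL$ itself. The key structural observation I would exploit is that the macroscopic quantities are averages of the microscopic ones: since $\zeta=\frac1N\sum_l\psi_l$ gives $z_j=\frac1N\sum_l z_{lj}$, the definitions \eqref{aux1}--\eqref{aux2} yield $f_j=\frac1N\sum_{l=1}^N f_{lj}$ (and likewise $g_j=\frac1N\sum_l g_{lj}$), so in particular $|f_j|\leq\frac1N\sum_l|f_{lj}|\leq\sup_{j,k}|f_{jk}|$. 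Thus control of the individual correlation deviations transfers directly to the macroscopic $f_j$.

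First I would establish the a priori estimate that drives the bootstrap. On any interval $[0,T]$ on which $\ddt\cL\leq-\frac{C_{M_1}}{2}\cL$ holds, $\cL$ is non-increasing (as $\cL\geq0$), so $\cL(t)\leq\cL(0)\leq\frac{M_1^2+M_2^2}{2}$, the last bound coming from $\cL(0)=\frac{1}{2N^2}\sum_{j,k}(f_{jk}^2(0)+g_{jk}^2(0))\leq\frac{1}{2N^2}N^2(M_1^2+M_2^2)$. Since $\sum_{j,k}(f_{jk}^2+g_{jk}^2)=2N^2\cL(t)$ dominates each individual summand, $\sup_{j,k}|f_{jk}(t)|\leq\sqrt{2N^2\cL(t)}\leq N\sqrt{M_1^2+M_2^2}\leq N(M_1+M_2)$, and hence $\sup_j|f_j(t)|\leq N(M_1+M_2)$ by the averaging identity. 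Feeding $f_j+f_k\leq2N(M_1+M_2)$ into \eqref{Gron1} and using the hypothesis \eqref{Ndep} rewritten as $2N(M_1+M_2)<\frac{C_B+2M_1}{2}$, I get $\sup_{j,k}(f_j+f_k-C_B)<-\frac{C_B-2M_1}{2}$, so that $\ddt\cL\leq 2\k\cL\cdot\big(-\tfrac{C_B-2M_1}{2}\big)=-\frac{C_{M_1}}{2}\cL$ with strict slack.

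With this estimate the three properties follow. For nonemptiness, at $t=0$ one has $\sup_j f_j(0)\leq M_1<\frac{C_B}{2}$ (a consequence of \eqref{Ndep}, since $4N(M_1+M_2)-2M_1\geq2M_1$), so the computation of Lemma \ref{l:decay} already yields $\ddt\cL|_{t=0}\leq-2\k(C_B-2M_1)\cL(0)=-C_{M_1}\cL(0)\leq-\frac{C_{M_1}}{2}\cL(0)$, i.e. $0\in I_{max}$. Closedness is immediate: $t\mapsto\ddt\cL(t)+\frac{C_{M_1}}{2}\cL(t)$ is continuous, because the correlations solving \eqref{correlations} are smooth in $t$ (the right-hand side being polynomial), so $I_{max}$ is the preimage of $(-\infty,0]$. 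For openness, if $[0,t_0]\subseteq I_{max}$ then the a priori estimate applies on $[0,t_0]$ and returns the \emph{strict} inequality $\ddt\cL<-\frac{C_{M_1}}{2}\cL$ there; by continuity this strict inequality, hence the non-strict one defining $I_{max}$, persists on $[0,t_0+\delta]$ for some $\delta>0$. As $[0,\infty)$ is connected, a nonempty clopen subset must be the whole half-line, so $I_{max}=[0,\infty)$.

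The main obstacle is the openness step, and this is exactly where the factor $\tfrac12$ in $-\frac{C_{M_1}}{2}\cL$ is essential: the a priori bound must be \emph{self-improving}, returning a strictly stronger inequality than the one assumed, so that membership in $I_{max}$ propagates to a neighborhood; the strictness in \eqref{Ndep} is what leaves a positive gap between $2N(M_1+M_2)$ and $\frac{C_B+2M_1}{2}$ to absorb this. A secondary point I would check carefully is the averaging identity $f_j=\frac1N\sum_l f_{lj}$, since it is precisely what converts the $\ell^2$ control supplied by $\cL$ into the pointwise control of $f_j+f_k$ demanded by \eqref{Gron1}; the (harmless) factor $N$ and the appearance of $M_2$ in \eqref{Ndep} are artifacts of routing through the energy $\cL$ rather than tracking each $f_{jk}$ individually.
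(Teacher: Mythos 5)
Your proof is correct and follows essentially the same route as the paper's: an a priori bound $\sup_j|f_j(t)|\leq N(M_1+M_2)$ deduced from the decay of $\cL$ on $I_{max}$, fed back into \eqref{Gron1} and combined with \eqref{Ndep} to produce a strictly better rate, which closes the clopen/bootstrap argument (the paper phrases the openness step as a contradiction at a first exit time $T^*$, but the mechanism is identical). A small bonus of your write-up is that you justify $\sup_j|f_j|\leq\sup_{j,k}|f_{jk}|$ explicitly via the averaging identity $f_j=\frac{1}{N}\sum_{l=1}^N f_{lj}$, a step the paper uses implicitly.
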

\begin{proof}
 Noting that $\sup_j f_j(0)=M\leq M_1<\frac{C_B}{2}$ along with Lemma \ref{l:decay} yields an initial rate of decay
 \begin{align}
     \ddt \cL|_{t=0}\leq -C_{M_1}\cL(0).
 \end{align}

Now observe that the set $I_{max}$ is nonempty due to the continuity of $\cL$. Second $I_{max}$ is closed is trivial. To see that it is also open, allowing us to conclude that $I_{max}=[0,\infty)$, we utilize the assumptions on $M_1$ and $M_2$.

Indeed, suppose that $4N(M_1+M_2)-2M_1<C_B$ and for the sake of contradiction that there is a $T^*$ finite such that $\ddt \cL(T^*)=-\frac{C_{M_1}}{2}\cL(T^*)$. Now as $\max_{jk}\frac{1}{2N^2} f_{jk}^2(t)\leq \cL(t)\leq \max_{jk} f_{jk}^2(t)+g_{jk}^2(t)$ always holds, and within $I_{max}$ we have $\ddt \cL \leq -\frac{C_{M_1}}{2}$, we have
\begin{align}
    \frac{1}{2N^2}\sup_{j}f_j^2(T^*)\leq \frac{1}{2N^2}\sup_{j,k} f_{jk}^2(T^*)\leq \cL(T^*)\leq e^{-\frac{C_{M_1}}{2}T^*}\cL(0)\leq \frac{1}{2}(M_1^2+M_2^2).
\end{align}
Hence, $\sup_j|f_j(T^*)|\leq N(M_1+M_2)$. Utilizing \eqref{Gron1} we also have
\begin{align}
    \ddt \cL(T^*)&\leq \frac{1}{2N^2}\sum_{j,k=1}^N (f_{jk}^2(T^*)+g_{jk}^2(T^*))2\k(2N(M_1+M_2)-C_B),\\
    &<\frac{1}{2N^2}\sum_{j,k=1}^N (f_{jk}^2(T^*)+g_{jk}^2(T^*))\k(2M_1-C_B),\\
    &=\frac{-C_{M_1}}{2}\cL(T^*).
\end{align}
Therefore $I_{max}=[0,\infty)$ and the system converges to the phase-locked state exponentially fast.
\end{proof}

Note the proof above gives the convergence rate as $-\frac{C_{M_1}}{2}$, however there was not anything special about the choice of $\frac{1}{2}$ in the definition of $I_{max}$ and the rate can be pushed as close to $C_{M_1}$ as we want. Further, as $C_{M_1}$ depends on the initial data, this provides an inverse relationship between the convergence rate and the admissible set of initial data around the fixed point for which the Lyapunov argument holds. Indeed, as $M_1$ increases, up to the maximal value $\frac{C_B}{2}$, we see that $C_{M_1}$ decreases. Further, larger values of $\k$ will also increase the synchronization rate, as $C_{M_1}=2\k(C_B-2M_1)$.

The relationship \eqref{Ndep} is also of particular interest. The inequality is required to ensure that the initial data is close enough to the fixed point so that the Gr\"onwall inequality is preserved in time. It is natural that a dependence on $N$ arises here as well as the need for $M_2$ to be small given that the Lyapunov candidate contains information on all $N$ wave functions and both the real and imaginary parts of the correlations.


\section{Asymptotic Wave Functions}\label{S:asym}
The result of Theorem \ref{Lyapunov} grants exponential convergence of the system of correlations \eqref{correlations} to the stable phase-locked state determined in Lemma \ref{l:phase-locked}. In this section we expand on what this means for the original wave functions $\psi_j(x,t)$ which solve the Schr\"odinger-Lohe system \eqref{SLmain}.

\begin{theorem}\label{t:asymptotic}
    Let $\psi_{j,0}(x)$, $j=1,...,N$ satisfy the assumptions of Theorem \ref{Lyapunov}. Then there exist wave functions $\tilde{\psi}_j$, $j=1,...,N$ such that
    \begin{align}
        \lim_{t\to\infty}\|\psi_j(t)-e^{-iHt}\tilde{\psi}_j\|_2=0,
    \end{align}
    where,
    \begin{align}
        \tilde{\psi}_j=\psi_{j,0}-i\int_0^{\infty} e^{-iHs}\left(\O_j\psi_j+\frac{i\k}{2}(\zeta-\langle \zeta,\psi_j\rangle\psi_j)\right)(s) \ds.
    \end{align}
\end{theorem}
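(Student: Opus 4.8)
The plan is to read \eqref{SLmain} as a forced linear Schr\"odinger equation $i\p_t\psi_j=H\psi_j+G_j$ with forcing $G_j:=\O_j\psi_j+\frac{i\k}{2}(\zeta-\langle\zeta,\psi_j\rangle\psi_j)$, and to exploit that $G_j$ is precisely the quantity that vanishes at the phase-locked state, since it is the right-hand side of the stationary equation \eqref{SLstat} (so $G_{j,\infty}=0$). First I pass to the interaction picture $\phi_j(t):=e^{iHt}\psi_j(t)$, which removes the free dynamics and leaves $\p_t\phi_j=-ie^{iHt}G_j(t)$, i.e.
\begin{align}
\phi_j(t)=\psi_{j,0}-i\int_0^t e^{iHs}G_j(s)\ds .
\end{align}
Because $e^{iHs}$ is unitary on $L^2$, the limit $\tilde\psi_j=\lim_{t\to\infty}\phi_j(t)$ exists in $L^2$ and defines the asymptotic profile in the statement as soon as $t\mapsto\|G_j(t)\|_2$ is integrable on $[0,\infty)$; in that case one moreover obtains $\|\psi_j(t)-e^{-iHt}\tilde\psi_j\|_2=\|\phi_j(t)-\tilde\psi_j\|_2\le\int_t^\infty\|G_j(s)\|_2\ds\to0$. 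The whole statement therefore reduces to integrability of $\|G_j(t)\|_2$.

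The crucial step is that this wave-function norm is governed entirely by the correlation quantities already controlled by the Lyapunov analysis. Expanding with $\|\psi_j\|_2=1$, $\|\zeta\|_2=\l$ and $z_j=\langle\zeta,\psi_j\rangle=r_j+is_j$, and using $s_j^\infty=-\O_j/\k$ from \eqref{lOmega} (so that $\O_j+\k s_j=-\k g_j$), a direct computation yields the identity
\begin{align}
\|G_j\|_2^2=\frac{\k^2}{4}\big(\l^2-|z_j|^2\big)+(\O_j+\k s_j)^2=\frac{\k^2}{4}\big(\l^2-|z_j|^2\big)+\k^2 g_j^2 .
\end{align}
Both summands are nonnegative ($\l^2-|z_j|^2\ge0$ by Cauchy--Schwarz) and vanish at the fixed point of \lem{l:phase-locked}. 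For the second term, $g_j=\frac1N\sum_l g_{lj}$ gives $g_j^2\lesssim_N\cL$ at once. For the first, I write $\l^2-|z_j|^2=\|\zeta-\langle\psi_j,\zeta\rangle\psi_j\|_2^2$ and, expanding $\zeta=\frac1N\sum_l\psi_l$, bound it by $\frac1N\sum_l(1-|z_{jl}|^2)$; since the fixed point lies on the boundary $|z_{jl}^\infty|=1$, one has $1-|z_{jl}|^2=|z_{jl}^\infty|^2-|z_{jl}|^2=f_{jl}(r_{jl}^\infty+r_{jl})+g_{jl}(s_{jl}^\infty+s_{jl})$, which by Cauchy--Schwarz is $\lesssim_N\sqrt{\cL}$. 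Hence $\|G_j(t)\|_2\lesssim_N\cL(t)^{1/4}$.

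To finish, I invoke \thm{Lyapunov}: under its hypotheses $\cL(t)\le\cL(0)e^{-C_2t}$, so $\|G_j(t)\|_2\lesssim e^{-C_2t/4}$, which is integrable. This makes $\int_0^t e^{iHs}G_j\ds$ Cauchy in $L^2$, defines $\tilde\psi_j$, and delivers the claimed convergence (indeed at an exponential rate). The condition $\k>(2\sqrt{\tfrac{11+8\sqrt2}{7}})\max_j|\O_j|$, equivalently $r_{jk}^\infty>2(\sqrt2-1)$ via \cor{l:rbound2}, enters only through \thm{Lyapunov} to guarantee this exponential decay of $\cL$.

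I expect the main obstacle to be the middle step: recognizing that $\|G_j\|_2$ collapses onto the ODE data $(\l,z_j,g_j)$ through the displayed identity, and then bounding it by $\cL$. The structural subtlety is that, because the phase-locked state sits on $\p B_1(0)$, the nonnegative quantity $\l^2-|z_j|^2$ vanishes only to first order in the correlation deviations, so one obtains the slower $\cL^{1/4}$ rate rather than $\cL^{1/2}$; this is harmless for integrability but must be accounted for honestly.
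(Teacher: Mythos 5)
Your proof is correct, and its skeleton is the same as the paper's: pass to the interaction picture via the Duhamel formula, use unitarity of $e^{iHs}$ so that everything reduces to integrability of $\|G_j(t)\|_2$ on $[0,\infty)$, and obtain that integrability from the exponential decay supplied by Theorem~\ref{Lyapunov}. Where you genuinely differ is the middle step. The paper proves Lemma~\ref{L2-conv}: it first establishes the pairwise alignment $\|e^{i(\a_j-\a_k)}\psi_j(t)-\psi_k(t)\|_2^2=2-2\cos(\a_j-\a_k)r_{jk}-2\sin(\a_j-\a_k)s_{jk}\lesssim E(t)$, and then, writing $\O_j=-\k s_j^{\infty}$, bounds $\|G_j\|_2\lesssim \frac{\k}{2}\|\zeta-\bar z_j^{\infty}\psi_j\|_2+E(t)\lesssim \frac{\k}{2N}\sum_l\|\psi_l-e^{i(\a_j-\a_l)}\psi_j\|_2+E(t)$. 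You instead collapse $\|G_j\|_2$ onto ODE data through the exact identity $\|G_j\|_2^2=\frac{\k^2}{4}\bigl(\l^2-|z_j|^2\bigr)+\k^2 g_j^2$ (I checked this; it is correct, with $\l^2-|z_j|^2=\|\zeta-\langle\psi_j,\zeta\rangle\psi_j\|_2^2\geq 0$), and then estimate both terms by the Lyapunov function. The two routes buy slightly different things: yours dispenses with the pairwise lemma entirely and makes the mechanism transparent --- the forcing is exactly the stationary-equation residual, whose norm is a function of $(\l,z_j,g_j)$ alone; the paper's route produces the pairwise phase alignment $\|e^{i(\a_j-\a_k)}\psi_j-\psi_k\|_2\to 0$ as a standalone statement of independent interest. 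A point in your favor is the honest bookkeeping of rates: because the fixed point of Lemma~\ref{l:phase-locked} sits on $\partial B_1(0)$, the quantities $1-|z_{jl}|^2$ and $\l^2-|z_j|^2$ vanish only to first order in the deviations, so one gets $\|G_j\|_2\lesssim \mathcal{L}^{1/4}$; the paper's Lemma~\ref{L2-conv} silently passes from a bound on a squared norm to the same bound on the norm (writing $\lesssim E(t)$ where only $\sqrt{E(t)}$ follows), which is immaterial for integrability but less precise. One cosmetic remark: your Duhamel computation yields $\tilde\psi_j=\psi_{j,0}-i\int_0^{\infty}e^{+iHs}G_j(s)\ds$, which matches the paper's own proof; the $e^{-iHs}$ appearing in the theorem's displayed formula is a sign typo in the paper, not a discrepancy in your argument.
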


Let us first begin with a useful lemma.
\begin{lemma}\label{L2-conv}
    Let $\psi_{j,0}(x)$, $j=1,...,N$ satisfy the assumptions of Theorem \ref{Lyapunov}. Then for each $j,k=1,...,N$,
    \begin{align}
        \lim_{t\to \infty}\|e^{i(\a_j-\a_k)}\psi_j(t)-\psi_k(t)\|_2=0.
    \end{align}
    Moreover the convergence rate is exponential and given by Theorem \ref{Lyapunov}, $E(t)\lesssim e^{-\frac{C_{M_1}}{2}t}$. As a consequence,
    \begin{align}\label{Edecay}
        \left\|\O_j\psi_j+\frac{i\k}{2}(\zeta-\langle \zeta,\psi_j\rangle \psi_j)\right\|_2\lesssim E(t).
    \end{align}
\end{lemma}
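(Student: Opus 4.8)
The plan is to reduce both assertions to the decay of the Lyapunov functional $\cL$ from Theorem~\ref{Lyapunov}. I would start by expanding the squared norm directly: using $\|\psi_j\|_2=\|\psi_k\|_2=1$ and $z_{jk}=\langle\psi_j,\psi_k\rangle$, one gets
\begin{align}
\|e^{i(\a_j-\a_k)}\psi_j-\psi_k\|_2^2=2-2\Re\!\left(e^{-i(\a_j-\a_k)}z_{jk}\right)=2-2\left(r_{jk}^{\infty}r_{jk}+s_{jk}^{\infty}s_{jk}\right),
\end{align}
where I used $\cos(\a_j-\a_k)=r_{jk}^{\infty}$ and $\sin(\a_j-\a_k)=s_{jk}^{\infty}$ from Lemma~\ref{l:phase-locked}. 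Substituting $2=2((r_{jk}^{\infty})^2+(s_{jk}^{\infty})^2)$ collapses this into the auxiliary variables \eqref{aux1}, so that
\begin{align}
\|e^{i(\a_j-\a_k)}\psi_j-\psi_k\|_2^2=2\left(r_{jk}^{\infty}f_{jk}+s_{jk}^{\infty}g_{jk}\right)\le 2\sqrt{f_{jk}^2+g_{jk}^2}\le 2\sqrt{2}\,N\sqrt{\cL},
\end{align}
the inequalities coming from Cauchy--Schwarz in $\R^2$ (with $(r_{jk}^{\infty})^2+(s_{jk}^{\infty})^2=1$) and the definition \eqref{def:Lyap} of $\cL$. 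Theorem~\ref{Lyapunov} gives exponential decay of $\cL$, so the left-hand side vanishes at the exponential rate inherited from $\cL$; I take $E(t)$ to be this $L^2$ norm.

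For the consequence, the key is that the order parameter aligns with each $\psi_j$ up to the same error. From \eqref{lcos} and $\frac1N\sum_l\sin\a_l=0$ one has $\frac1N\sum_l e^{-i\a_l}=\l_{\infty}$, hence $\l_{\infty}e^{i\a_j}=\frac1N\sum_l e^{i(\a_j-\a_l)}$ and
\begin{align}
\zeta-\l_{\infty}e^{i\a_j}\psi_j=\frac1N\sum_{l=1}^N\left(\psi_l-e^{i(\a_j-\a_l)}\psi_j\right).
\end{align}
The triangle inequality and the first part of the lemma (with $k$ replaced by $l$) give $\|\zeta-\l_{\infty}e^{i\a_j}\psi_j\|_2\lesssim E(t)$, and one application of Cauchy--Schwarz then yields $|\langle\zeta,\psi_j\rangle-\l_{\infty}e^{-i\a_j}|\lesssim E(t)$.

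Substituting these two approximations into the driving term $\O_j\psi_j+\frac{i\k}{2}(\zeta-\langle\zeta,\psi_j\rangle\psi_j)$, the order-one contribution is
\begin{align}
\O_j\psi_j+\frac{i\k}{2}\l_{\infty}\left(e^{i\a_j}-e^{-i\a_j}\right)\psi_j=\left(\O_j-\k\l_{\infty}\sin\a_j\right)\psi_j,
\end{align}
which is identically zero by the relation $\l_{\infty}\sin\a_j=\O_j/\k$ of \eqref{lOmega}. What remains is exactly $\frac{i\k}{2}$ times the two error terms controlled above, so $\|\O_j\psi_j+\frac{i\k}{2}(\zeta-\langle\zeta,\psi_j\rangle\psi_j)\|_2\lesssim \k\,E(t)\lesssim E(t)$.

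The first step is routine algebra once one notices that the squared distance is \emph{linear} in $(f_{jk},g_{jk})$ and hence controlled by $\sqrt{\cL}$. The main obstacle is the cancellation in the last step: the driving term does not merely become small but must cancel \emph{exactly} at leading order, which forces one to replace both $\zeta$ and $\langle\zeta,\psi_j\rangle$ by their asymptotic values with errors that are uniformly $O(E(t))$, and to invoke the precise identity $\l_{\infty}\sin\a_j=\O_j/\k$. Establishing the alignment $\|\zeta-\l_{\infty}e^{i\a_j}\psi_j\|_2\lesssim E(t)$ at the correct exponential rate, uniformly over the $N$ oscillators, is where the care lies.
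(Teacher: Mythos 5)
Your proposal is correct and follows essentially the same route as the paper's proof: you expand the squared norm into the deviations $f_{jk},g_{jk}$ controlled by $\cL$, and for \eqref{Edecay} you replace $\zeta$ and $\langle\zeta,\psi_j\rangle$ by $\l_{\infty}e^{i\a_j}\psi_j$ and $\l_{\infty}e^{-i\a_j}$ and invoke the cancellation $\l_{\infty}\sin\a_j=\O_j/\k$, which is exactly the paper's step of absorbing $\O_j\psi_j$ via $\O_j=-\k s_j^{\infty}$ and then decomposing $\zeta-\bar z_j^{\infty}\psi_j=\frac1N\sum_l(\psi_l-e^{i(\a_j-\a_l)}\psi_j)$. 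The only cosmetic difference is the bookkeeping of $E(t)$: you take it to be the $L^2$ norm itself while the paper uses it to bound $|z_{jk}-z_{jk}^{\infty}|$, which shifts the exponential rate only by harmless square-root factors.
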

\begin{proof}
    From Theorem \ref{Lyapunov} we know that $|z_{jk} -e^{i(\a_j-\a_k)}|\lesssim E(t)$ for $E(t)\lesssim e^{-\frac{C_{M_1}}{2}t}$ an exponentially decaying quantity. Therefore since $\|\psi_j\|_2\equiv 1$,
    \begin{align}
        \|e^{i(\a_j-\a_k)}\psi_j(t)-\psi_k(t)\|_2^2&=2-2\Re\langle e^{i(\a_j-\a_k)}\psi_j(t)-\psi_k(t)\rangle,\\
        &=2-2\cos(\a_j-\a_k)r_{jk}-2\sin(\a_j-\a_k)s_{jk},\\
        &\lesssim E(t)
    \end{align}

    To see \eqref{Edecay} we utilize the fact that $\O_j=-\k s_j^{\infty}$ to combine the terms as
    \begin{align}
        \left\|\O_j\psi_j+\frac{i\k}{2}(\zeta-\langle \zeta,\psi_j\rangle \psi_j)\right\|_2&=\left\|\frac{i\k}{2}(\zeta-(\langle \zeta,\psi_j\rangle-2is_j^{\infty}) \psi_j)\right\|_2,\\
        &\lesssim \frac{\k}{2}\left\| \zeta-\bar{z}_j^{\infty}\psi_j\right\|_2+E(t)
    \end{align}
    Rewriting $\zeta$ and $\bar{z}_j^{\infty}$ in terms of individual indices $l$ we have
    \begin{align}
        \lesssim \frac{\k}{2N}\sum_{l=1}^N\left\| \psi_l-e^{i(\a_j-\a_l)}\psi_j\right\|_2+E(t)\lesssim E(t).
    \end{align}
\end{proof}
With this in hand, let us return to the integral formulation. Indeed any solution to \eqref{SLmain} satisfies
\begin{align}
    \psi_j(t)=e^{-iHt}\psi_{j,0}-i\int_0^t e^{-iH(t-s)}\left(\O_j\psi_j+\frac{i\k}{2}(\zeta-\langle \zeta,\psi_j\rangle \psi_j)\right) (s) \ds
\end{align}
Dividing through by $e^{-iHt}$ yields
\begin{align}
    e^{iHt}\psi_j(t)=\psi_{j,0}-i\int_0^t e^{iHs}\left(\O_j\psi_j+\frac{i\k}{2}(\zeta-\langle \zeta,\psi_j\rangle \psi_j)\right) (s) \ds
\end{align}
Due to the previous Lemma \ref{L2-conv}, we can conclude a strong $L^2$-limit for the above time integral,
\begin{align}
    \left\|\int_0^t e^{iHs}\left(\O_j\psi_j+\frac{i\k}{2}(\zeta-\langle \zeta,\psi_j\rangle \psi_j)\right) (s) \ds\right\|_2\lesssim \int_0^t E(s) \ds .
\end{align}
Therefore, we define
\begin{align}
        \tilde{\psi}_j=\psi_{j,0}-i\int_0^{\infty} e^{-iHs}\left(\O_j\psi_j+\frac{i\k}{2}(\zeta-\langle \zeta,\psi_j\rangle\psi_j)\right)(s) \ds.
    \end{align}
    for each $j=1,...,N$, and the result is now clear.\\

An important note is that this argument yields the asymptotic state of the original wave functions, $\psi_j$, as equal to the asymptotic state of new wave functions, $\tilde{\psi}_j$, under the action of the original Hamiltonian. However, this is to be expected due to the synchronization dynamics. Even though the convergence to phase-locked states occurs exponentially fast, the dynamics themselves distorts the structure of each wave function providing a fundamentally different form of the asymptotic wave functions $\tilde{\psi}_j$.

\section{Asymptotic Values}\label{AV}
In this section we examine the potential values that $\l$, $\k$ and $\a_i$ can take depending on the configuration of the natural frequencies. We drop the sub/superscripts of $\infty$ as we are only concerned with the asymptotic values and not the dynamics in the remainder of the paper. Let us begin with a quick lemma giving an upper bound on the order parameter
$\lambda$ in terms of the $\Omega_j$'s and $\kappa$.
\begin{lemma}
\begin{equation*}
\frac1N\sum_{j=1}^N(\sin\alpha_j)^2\leq 1-\lambda^2.
\end{equation*}
\end{lemma}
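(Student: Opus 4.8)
The plan is to reduce the estimate entirely to the characterization of the order parameter obtained in \lem{l:phase-locked} together with one application of Cauchy--Schwarz. The only structural input needed is the identity \eqref{lcos}, which in the asymptotic regime (dropping the $\infty$ subscript) reads $\lambda=\frac1N\sum_{j=1}^N\cos\alpha_j$; that is, the order parameter is exactly the arithmetic mean of the cosines of the opening angles. No use of the companion relation $\frac1N\sum_j\sin\alpha_j=0$ is required for this particular bound.

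First I would exploit the Pythagorean identity $\sin^2\alpha_j=1-\cos^2\alpha_j$ to rewrite the quantity of interest as an averaged complement,
\begin{equation*}
\frac1N\sum_{j=1}^N(\sin\alpha_j)^2=1-\frac1N\sum_{j=1}^N(\cos\alpha_j)^2.
\end{equation*}
The single inequality that finishes the argument is that the quadratic mean of the cosines dominates the square of their arithmetic mean,
\begin{equation*}
\left(\frac1N\sum_{j=1}^N\cos\alpha_j\right)^2\leq \frac1N\sum_{j=1}^N(\cos\alpha_j)^2,
\end{equation*}
which is precisely Cauchy--Schwarz applied to the vectors $(\cos\alpha_1,\dots,\cos\alpha_N)$ and $(1,\dots,1)$ (equivalently, the nonnegativity of the empirical variance of the cosines). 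Since by \eqref{lcos} the left-hand side is exactly $\lambda^2$, substituting back into the previous display yields
\begin{equation*}
\frac1N\sum_{j=1}^N(\sin\alpha_j)^2=1-\frac1N\sum_{j=1}^N(\cos\alpha_j)^2\leq 1-\lambda^2,
\end{equation*}
which is the claim.

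There is no substantive obstacle here: the entire content is the identity \eqref{lcos} supplying $\lambda$ as a mean of cosines, after which Jensen/Cauchy--Schwarz closes the gap in one line. The only point worth remarking is that the inequality is saturated exactly when the cosines $\cos\alpha_j$ all coincide (vanishing spread), which corresponds to the fully symmetric configuration; in every heterogeneous case the bound is strict, reflecting genuine loss of coherence in $\lambda$ relative to the individual oscillators.
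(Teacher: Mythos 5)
Your proof is correct and is essentially identical to the paper's: both arguments combine the identity $\lambda=\frac1N\sum_j\cos\alpha_j$ from \eqref{lcos} with Cauchy--Schwarz to get $\lambda^2\leq\frac1N\sum_j\cos^2\alpha_j$, and then invoke the Pythagorean identity to convert cosines into sines. The only difference is cosmetic ordering of the two steps, plus your (correct) remark on when equality holds.
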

\begin{proof}
\begin{equation*}
\lambda=\frac1N\sum_{j=1}^N\cos\alpha_j\leq\frac1N N^{1/2}
\left(\sum_{j=1}^N\cos^2\alpha_j\right)^{1/2}
\end{equation*}
Consequently,
\begin{equation*}
\frac1N\sum_{j=1}^N\cos^2\alpha_j=
\frac1N\sum_{j=1}^N(1-\sin^2\alpha_j)\geq\lambda^2
\end{equation*}
\end{proof}
In the following subsections we provide an analysis of the asymptotic state that will be useful for providing stability analysis in Section \ref{S:Stable}. For this reason the results in this section are always in regards to what will be shown to be stable fixed points. Thus as for Kuramoto this will only occur when $\cos(\a_j)\geq 0$ for all $j=1,...,N$. Any fixed points with $\cos(\a_j)<0$ will automatically be saddle points as in \cite{MS}. Indeed, within \cite{MS}, the authors highlight two critical coupling values for the Kuramoto model $K_c$ and $K_c'$, where for $K<K_c$ no phase-locked states exist, for $K_c<K<K_c'$, phase-locked states exist, but are not stable, and for $K_c<K_c'<K$, there is a unique stable phase-locked state. Where $K_c=K_c'$ in the so-called degenerate case when $N=2$ or there is a perfectly symmetric distribution between an even number of oscillators. In what follows we focus on the critical coupling value $K_c'$, but for consistency in notation we will denote it by $\k^*$. We provide here a method for finding this critical value $\k^*$, and for providing bounds on the maximal angle of opening $\a_1-\a_N$.

\subsection{The two oscillator case}
For the Schr\"odinger-Lohe model, the case of $N=2$ oscillators has seen treatment in \cite{AM}, however, let us expand on the results here. Let $N=2$. In order for $\sum_i \O_i=0$, we have $\O_1=-\O_2=\O>0$. This implies that $\a_1=-\a_2=\a>0$, and $\l=\cos(\a)$. Now leaving $\O$ fixed, let us differentiate $\l$ with respect to the coupling strength $\k$.
\begin{align}
    \frac{d\l}{d\k}=-\sin(\a)\frac{d\a}{d\k}.
\end{align}
However, since we also have $\O=\l\k\sin(\a)$ so
\begin{align}
    \frac{d\l}{d\k}=\frac{-\O}{\k^2\sin^2(\a)}\left(\sin(\a)+\k\cos(\a)\frac{d\a}{d\k}\right),
\end{align}
as well. Equating these two and solving for $\frac{d\a}{d\k}$
gives
\begin{align}
    \frac{d\a}{d\k}=\frac{\O\sin(\a)}{\k(\k\sin^3(\a)-\O\cos(\a))}.
\end{align}
We can see here that the critical points of $\frac{d\a}{d\k}$ are when $\a=0$ (which only occurs if $\O=0$, or at $\k=\infty$), in which case $\frac{d\a}{d\k}=0$, and the other when
\begin{align}
    \k=\frac{\O\cos(\a)}{\sin^3(\a)}:=\k^*,
\end{align}
where $\frac{d\a}{d\k}$ fails to exist. At the critical value $\k^*$ we have
\begin{align}
    \O&=\l\k^*\sin(\a),\\
    &=\O\frac{\cos^2(\a)}{\sin^2(\a)},\\
    1&=\cot^2(\a).
\end{align}
implying $\a=\frac{\pi}{4}$, and thus $\l=\frac{\sqrt{2}}{2}$ and $\k^*=2\O$. We can now conclude that for $\k>2\O$, we have monotonicity of the fixed points, in that $\frac{d\a}{d\k}<0$ and $\frac{d\l}{d\a}>0$, and we see that as $\k \to \infty$ $\l \to 1$ and $\a \to 0$, recovering the practical synchronization result of \cite{CCH}. While for $\k<\k^*$ it is known that no asymptotic state exists \cite{AM}. This technique can be extended to the general case of finitely many oscillators, $N\geq 3$.\\

\subsection{The general finite $N$ oscillator case} For $N\geq 3$ we can still compute the derivative of the order parameter's $L^2$ norm with respect to the coupling strength. Indeed differentiating using both \eqref{lOmega} and \eqref{lcos},
\begin{align}
    \frac{d\l}{d\k}&=-\frac{1}{N}\sum_{j=1}^N\sin(\a_j)\frac{d\a_j}{d\k},\\
    &=\frac{-\O_j}{\k^2\sin^2(\a_j)}\left(\sin(\a_j)+\k\cos(\a_j)\frac{d\a_j}{d\k}\right),
\end{align}
for all $j=1,...,N$. As this is true for all $j$, we can for say $\a_1$, equate
\begin{align}
    \frac{-\O_1}{\k^2\sin^2(\a_1)}\left(\sin(\a_1)+\k\cos(\a_1)\frac{d\a_1}{d\k}\right)&=\frac{-\O_j}{\k^2\sin^2(\a_j)}\left(\sin(\a_j)+\k\cos(\a_j)\frac{d\a_j}{d\k}\right),
\end{align}
plugging in for $\O_1=\l\k\sin(\a_1)$ and $\O_j=\l\k\sin(\a_j)$,
\begin{align}
    \frac{-\l}{\k}-\l\cot(\a_1)\frac{d\a_1}{d\k}&=\frac{-\l}{\k}-\l\cot(\a_j)\frac{d\a_j}{d\k}.
\end{align}
Therefore,
\begin{align}
    \frac{d\a_j}{d\k}=\frac{\cot(\a_1)}{\cot(\a_j)}\frac{d\a_1}{d\k},
\end{align}
which can be plugged back into the equation for $\frac{d\l}{d\k}$ to get
\begin{align}
    \frac{d\l}{d\k}&=-\frac{1}{N}\sum_{j=1}^N\sin(\a_j)\frac{\cot(\a_1)}{\cot(\a_j)}\frac{d\a_1}{d\k},\\
    &=-\frac{\l}{\k}-\l\cot(\a_1)\frac{d\a_1}{d\k}.
\end{align}
Solving for $\frac{d\a_1}{d\k}$ yields
\begin{align}
    \frac{d\a_1}{d\k}=\frac{-\l\tan(\a_1)}{\k\left(\l-\frac{1}{N}\sum_{j=1}^N\frac{\sin^2(\a_j)}{\cos(\a_j)}\right)}.
\end{align}
and
\begin{align}
    \frac{d\l}{d\k}=-\frac{\l}{\k}+\frac{\l^2}{\k\left(\l-\frac{1}{N}\sum_{j=1}^N\frac{\sin^2(\a_j)}{\cos(\a_j)}\right)}
\end{align}
We now see the same type of behavior as for the $N=2$ case, where for $\k>\k^*$ the order parameter increases monotonically as $\k$ increases $(\frac{d\l}{d\k}>0)$, until at infinity $\l \to 1$ and all $\a_j \to 0$ monotonically in $\k$ as well. We can determine where the critical value of $\k^*$ is by looking at where $\frac{d\a_1}{d\k}$ fails to exist, at
\begin{align}
    \l&=\frac{1}{N}\sum_{j=1}^N\frac{\sin^2(\a_j)}{\cos(\a_j)},\\
    &=\frac{1}{N}\sum_{j=1}^N\frac{1-\cos^2(\a_j)}{\cos(\a_j)}=\frac{1}{N}\sum_{j=1}^N\cos(\a_j),
\end{align}
in other words, the critical value for synchronization is reached exactly when
\begin{align}\label{lstar}
    \l=\frac{1}{N}\sum_{j=1}^N\cos(\a_j)=\frac{1}{2N}\sum_{j=1}^N\frac{1}{\cos(\a_j)}:=\l^*.
\end{align}
With the formula \eqref{lstar} in hand we can pull out some interesting information about the synchronized state at the critical value $\k^*$. Indeed, the equation $\frac{1}{2\cos(x)}=\cos(x)$ in the region $-\frac{\pi}{2}<x<\frac{\pi}{2}$ has solutions $x=\pm\frac{\pi}{4}$. Which is to say, at the critical value $\k^*$ there exists an $i$ such that $|\a_i|<\frac{\pi}{4}$, if and only if there exists a $j$ such that $|\a_j|>\frac{\pi}{4}$. Meaning that at the critical coupling $\k^*$, the minimal angle of opening for the phase-locked state is $\frac{\pi}{2}$, achieved in the $N=2$ case. We express the upper and lower bounds on the order parameter $\l$ and the angle of opening $\a_1-\a_N$ that can be achieved at the critical value $\k^*$ in the following two lemmas:

\begin{lemma}\label{l:opening}
    At the critical value $\k=\k^*$ the angle of opening for the phase-locked state satisfies,
    \begin{align}
        \pi>2\cos^{-1}\left(\frac{-N+2+\sqrt{N^2-4N+36}}{8}\right)\geq\a_1-\a_N\geq \frac{\pi}{2},
    \end{align}
    where the value $\frac{\pi}{2}$ is only achieved when there are only two distinct natural frequencies so that for some $1<j<N$, we have $\O_1=...=\O_j=M_1$ and $\O_{j+1}=...=\O_N=-M_2$ with
    \begin{align}\label{eq:balance}
        jM_1=(N-j)M_2,
    \end{align}
    where \eqref{eq:balance} must hold due to the fact that $\sum_{j=1}^N \O_j=0$.
    Furthermore, the value $\l^*=\|\zeta_{\infty}\|_2$ at the critical value $\k^*$ satisfies,
    \begin{align}
       \frac{1+(N-1)^2}{N\sqrt{(N-1)^2+1}}\geq \l^*\geq \frac{\sqrt{2}}{2}
    \end{align}
    and is minimal only when $N$ is even and $j=\frac{N}{2}$, and maximal when $j=1$.
\end{lemma}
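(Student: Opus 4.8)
The plan is to read both statements as constrained optimization problems over the admissible asymptotic angles $(\alpha_1,\dots,\alpha_N)$ at the critical coupling. By \eqref{lstar} and the last relation of Lemma \ref{l:phase-locked}, these are exactly the vectors satisfying the two scalar constraints
\[
\sum_{j=1}^N \sin\alpha_j = 0, \qquad \sum_{j=1}^N\Big(2\cos\alpha_j - \tfrac{1}{\cos\alpha_j}\Big)=0,
\]
the second being $\frac1N\sum\cos\alpha_j=\frac{1}{2N}\sum\sec\alpha_j$ cleared of denominators. Ordering $\alpha_1=\max_j\alpha_j\ge 0\ge\min_j\alpha_j=\alpha_N$ (forced by the first constraint), I set $s_j=\sin\alpha_j$ and $\Phi(s)=\frac{1-2s^2}{\sqrt{1-s^2}}$, so the second constraint reads $\sum_j\Phi(s_j)=0$. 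A direct computation gives $\Phi''(s)=-3(1-s^2)^{-5/2}<0$, so $\Phi$ is strictly concave on $(-1,1)$ with maximum $\Phi(0)=1$ and zeros at $s=\pm\tfrac1{\sqrt2}$; this strict concavity is the single engine behind every estimate below.

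For the lower bound on the opening I would bound each $\Phi(s_j)$ below by the chord $\ell$ of $\Phi$ on $[s_N,s_1]$. Summing and using $\sum_j s_j=0$ kills the slope term, leaving $0=\sum_j\Phi(s_j)\ge N\,\ell(0)$ with $\ell(0)=\frac{s_1\Phi(s_N)-s_N\Phi(s_1)}{s_1-s_N}$, so $\ell(0)\le 0$. Dividing by $s_1 s_N<0$ and using $\Phi(\sin\alpha)/\sin\alpha=2\cot(2\alpha)$ (together with the evenness of $\Phi$) turns this into $\cot(2\alpha_1)+\cot(2|\alpha_N|)\le 0$, i.e. $\frac{\sin\!\big(2(\alpha_1-\alpha_N)\big)}{\sin(2\alpha_1)\sin(2|\alpha_N|)}\le 0$. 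The denominator is positive, so $\sin\!\big(2(\alpha_1-\alpha_N)\big)\le 0$ with argument in $(0,2\pi)$, forcing $\alpha_1-\alpha_N\ge\frac\pi2$. Strict concavity makes this an equality only when every $s_j\in\{s_N,s_1\}$, i.e. a two-frequency configuration, and then $\alpha_1-\alpha_N=\frac\pi2$ with $\sum_j s_j=0$ reproduces the balance $jM_1=(N-j)M_2$ of \eqref{eq:balance}.

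For the upper bound on the opening I would instead bound the interior angles from above: $\sum_{j\ne 1,N}\Phi(s_j)\le (N-2)\Phi(0)=N-2$, hence $\Phi(s_1)+\Phi(s_N)\ge -(N-2)$. Writing $a=\alpha_1\ge 0$, $b=-\alpha_N\ge 0$ and using that $F(\alpha):=\Phi(\sin\alpha)=2\cos\alpha-\sec\alpha$ is even, this becomes $F(a)+F(b)\ge -(N-2)$, while the opening is $a+b$. Maximizing the linear functional $a+b$ over the convex superlevel set $\{F(a)+F(b)\ge -(N-2)\}$ (convex since $F$ is concave) places the optimum where $F'(a)=F'(b)$; as $F'(\alpha)=-\sin\alpha(2+\sec^2\alpha)$ is strictly monotone on $[0,\frac\pi2)$ this gives $a=b$, so $2F(a)=-(N-2)$, i.e. $4\cos^2 a+(N-2)\cos a-2=0$ and $\cos a=\frac{-N+2+\sqrt{N^2-4N+36}}{8}$. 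The configuration with two angles $\pm a$ and the other $N-2$ equal to $0$ is admissible and attains this value, giving the stated maximum $2\cos^{-1}\!\big(\frac{-N+2+\sqrt{N^2-4N+36}}{8}\big)$, which is $<\pi$ because the root is strictly positive (as $\sqrt{N^2-4N+36}>N-2$).

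For the order-parameter bounds I would pass to $x_j:=\cos\alpha_j\in(0,1]$ and the equivalent constraint $\sum_j h(x_j)=0$, $h(x)=2x-\tfrac1x$ concave. The supporting line of $h$ at $x=\tfrac1{\sqrt2}$ reads $h(x)\le 4\big(x-\tfrac1{\sqrt2}\big)$, so $0=\sum_j h(x_j)\le 4\big(\sum_j x_j-\tfrac{N}{\sqrt2}\big)$ and therefore $\lambda^*=\frac1N\sum_j x_j\ge\frac{\sqrt2}{2}$, with equality only when all $x_j=\tfrac1{\sqrt2}$, i.e. all $|\alpha_j|=\tfrac\pi4$, which by $\sum_j s_j=0$ needs $N$ even with split $j=\frac N2$. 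For the upper bound I would reduce to the two-frequency family: $j$ angles at $-a$ and $N-j$ at $+b$ with $a+b=\frac\pi2$ and $j\sin a=(N-j)\sin b$ force $\tan a=\frac{N-j}{j}$ and the clean value $\lambda^*=\frac{\sqrt{j^2+(N-j)^2}}{N}$; the parabola $j^2+(N-j)^2$ is minimized at $j=\frac N2$ (value $\frac{\sqrt2}{2}$) and maximized at $j=1$, giving $\frac{\sqrt{(N-1)^2+1}}{N}=\frac{1+(N-1)^2}{N\sqrt{(N-1)^2+1}}$. I expect the \emph{main obstacle} to be exactly this last reduction, namely showing that the extreme values of $\lambda^*$ are realized on the two-frequency family and not on a genuinely spread configuration: the Lagrange conditions for extremizing $\frac1N\sum\cos\alpha_j$ under the two constraints collapse to a cubic $\nu\tan^3\alpha_j+(3\nu-1)\tan\alpha_j-\mu=0$, so an extremizer has at most three distinct angle values, and ruling out the three-valued critical points (and their degenerations) to force a two-valued optimum is the delicate step, whereas both opening bounds fall out directly from the strict concavity of $\Phi$ and $\sum_j\sin\alpha_j=0$.
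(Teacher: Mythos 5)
Your proposal takes a genuinely different route from the paper, and for three of the four claims it is actually \emph{more} rigorous than the paper's own proof. The paper proceeds by exhibiting candidate extremal configurations and computing them explicitly: for the two-frequency family it solves \eqref{lstar} together with $\sum_j\sin\a_j=0$ to get $\cos\a_1=j/\sqrt{j^2+(N-j)^2}$, $\cos\a_N=(N-j)/\sqrt{j^2+(N-j)^2}$, hence $\cos^2\a_1+\cos^2\a_N=1$ and opening exactly $\tfrac{\pi}{2}$, with $\l^*=\sqrt{j^2+(N-j)^2}/N$; for the maximal opening it takes the symmetric configuration $\O_1=-\O_N$, $\O_j=0$ otherwise, and solves $4\cos^2\a_1+(N-2)\cos\a_1-2=0$ (the same quadratic you derive). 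That these configurations are extremal over \emph{all} admissible configurations is asserted in the paper ("all other distributions... lie within these bounds") rather than proved. Your concavity framework supplies exactly the missing arguments: the chord bound on $\Phi(s)=(1-2s^2)/\sqrt{1-s^2}$ combined with $\sum_j s_j=0$ proves $\a_1-\a_N\ge\tfrac{\pi}{2}$ for every admissible configuration, with strict concavity identifying the equality case as precisely the two-valued (two-frequency) ones; the relaxation $F(\a_1)+F(-\a_N)\ge-(N-2)$ plus the convex-program argument proves the upper bound on the opening with the paper's exact constant; and the tangent line to $h(x)=2x-1/x$ at $x=1/\sqrt{2}$ proves $\l^*\ge\tfrac{\sqrt2}{2}$ with the stated equality case ($N$ even, $j=N/2$). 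Your computations check out ($\Phi''=-3(1-s^2)^{-5/2}$, $\Phi(\sin\a)/\sin\a=2\cot 2\a$, the quadratic for $\cos a$). One minor point in the KKT step: you should also rule out maxima on the faces $a=0$ or $b=0$ of the box; this is immediate because the multiplier of the box constraint comes out negative there, and since the problem is a convex program satisfying Slater's condition, your interior KKT point $a=b$ is then the global maximum.

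The one genuine gap --- which you flag yourself --- is the upper bound $\l^*\le (1+(N-1)^2)/(N\sqrt{(N-1)^2+1})$: you only compute the maximum within the two-frequency family, and the reduction of the general maximization of $\tfrac1N\sum_j\cos\a_j$ under the two constraints to that family (ruling out three-valued Lagrange critical points, which for small $N$ is not even a restriction) is left open. Be aware, though, that the paper has exactly the same gap: it never argues extremality for this bound either, it simply computes the two-frequency values and the symmetric configuration and declares all other distributions to lie in between. So your proposal is nowhere weaker than the paper's proof and strictly stronger on both opening bounds and on the lower bound for $\l^*$; a fully rigorous proof of the lemma, in either approach, still requires the missing extremality argument for the upper bound on $\l^*$.
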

\begin{lemma}\label{k*bound}
    For $\k\geq\k^*$, the order parameter for the synchronized state has the lower bound $\l=\|\zeta_{\infty}\|_2\geq \frac{\sqrt{2}}{2}$ and angle of opening $\a_1-\a_N \leq 2\cos^{-1}\left(\frac{-N+2+\sqrt{N^2-4N+36}}{8}\right)$. Further, at the maximal angle of opening the value $\l^*$ can be computed directly as
    \begin{align}
        \l^*&=\frac{3(N-2)+\sqrt{N^2-4N+36}}{4N}
    \end{align}
\end{lemma}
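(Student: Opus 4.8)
The plan is to split the statement into two essentially independent pieces: the two monotone bounds valid for all $\k\geq\k^*$, and the exact evaluation of $\l^*$ at the configuration realizing the maximal opening. The first piece will be a direct consequence of the monotonicity in $\k$ derived in the foregoing subsection together with Lemma \ref{l:opening}, while the second is a short algebraic computation once the extremal configuration has been identified.

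For the monotone bounds, I would invoke the fact, already established in the preceding analysis of the general finite $N$ case, that for $\k>\k^*$ one has $\frac{d\l}{d\k}>0$ and all $\a_j\to0$ monotonically as $\k$ grows. In particular the largest angle $\a_1>0$ decreases and the most negative angle $\a_N<0$ increases toward $0$, so the opening $\a_1-\a_N$ is a decreasing function of $\k$ on $(\k^*,\infty)$. Consequently, for every $\k\geq\k^*$,
\begin{align*}
\l(\k)\geq \l(\k^*)=\l^*\geq\tfrac{\sqrt2}{2},\qquad
\a_1-\a_N\leq (\a_1-\a_N)\big|_{\k=\k^*}\leq 2\cos^{-1}\!\Big(\tfrac{-N+2+\sqrt{N^2-4N+36}}{8}\Big),
\end{align*}
where the two outer inequalities are precisely the critical-value bounds of Lemma \ref{l:opening}. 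This already yields the first sentence of the statement.

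For the exact value of $\l^*$, I would use that the maximal opening is attained by the symmetric configuration in which two oscillators sit at $\pm\phi$ and the remaining $N-2$ sit at $0$ (this choice automatically respects $\frac1N\sum_j\sin\a_j=0$). Imposing the two critical-value identities from \eqref{lstar},
\begin{align*}
\l^*=\frac1N\sum_{j=1}^N\cos\a_j=\frac{2\cos\phi+(N-2)}{N},\qquad
\l^*=\frac1{2N}\sum_{j=1}^N\frac1{\cos\a_j}=\frac{1}{2N}\Big(\frac{2}{\cos\phi}+(N-2)\Big),
\end{align*}
and equating them, after clearing $\cos\phi$ one obtains the quadratic $4\cos^2\phi+(N-2)\cos\phi-2=0$, whose positive root is $\cos\phi=\frac{-N+2+\sqrt{N^2-4N+36}}{8}$, in agreement with $\a_1-\a_N=2\phi$. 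Substituting this back into $\l^*=\frac{2\cos\phi+(N-2)}{N}$ and simplifying produces the claimed value $\l^*=\frac{3(N-2)+\sqrt{N^2-4N+36}}{4N}$.

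The algebra of the last paragraph is routine. The main obstacle is the identification of the extremal configuration: one must justify that the maximal opening is genuinely realized by the symmetric two-extreme/clustered arrangement rather than by some asymmetric placement of the middle oscillators. This is exactly the extremal characterization underlying Lemma \ref{l:opening}, and I would take it as given here rather than reprove it, since it is the decreasing monotonicity of the opening in $\k$ plus that critical-value extremal analysis that together pin down both the bound and the precise value of $\l^*$.
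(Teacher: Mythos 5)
Your proposal is correct and follows essentially the same route as the paper: the paper's (joint) proof likewise fixes the extremal configuration $\O_1=\frac{M}{2}=-\O_N$, $\O_j=0$ otherwise, equates the two expressions for $\l^*$ from \eqref{lstar} to obtain the quadratic $4\cos^2(\a_1)+(N-2)\cos(\a_1)-2=0$ and the claimed values, and then appeals to the monotonicity $\frac{d\l}{d\k}>0$, $\a_j\to 0$ established in the preceding subsection to pass from $\k=\k^*$ to all $\k\geq\k^*$. Your explicit acknowledgment that the identification of the extremal configuration is inherited from Lemma \ref{l:opening} matches the paper's own treatment, which asserts rather than fully derives that characterization.
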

\begin{proof}
    Let $\k=\k^*$, and $\O_1\geq \O_2\geq ... \geq  \O_N$, be such that $\sum_{j=1}^N\O_j=0$ and $\sum_{j=1}^N|\O_j|=M>0$. First the minimal case with only two distinct natural frequencies where $N=2n$ and $j=n$ is identical to the $N=2$ case already treated, yielding $\l^*=\frac{\sqrt{2}}{2}$ and $\a_1=\frac{\pi}{4}=-\a_N$. For a general distribution between the two distinct natural frequencies, the angle of opening will remain $\a_1-\a_N=\frac{\pi}{2}$, however the side with more oscillators will be shifted closer to the order parameter, increasing the value of $\l^*$. Indeed, if for some $1<j<N$, we have $\O_1=...=\O_j=M_1$ and $\O_{j+1}=...=\O_N=-M_2$ with
    \begin{align}
        jM_1=(N-j)M_2,
    \end{align}
    then using \eqref{lstar} we have
    \begin{align}\label{cos2}
        \left(\frac{j}{\cos(\a_1)}+\frac{N-j}{\cos(\a_N)}\right)=2(j\cos(\a_1)+(N-j)\cos(\a_N)),
    \end{align}
    where we used the fact that $\a_1=...=\a_j$ and $\a_{j+1}=...=\a_N$. Further we have
    \begin{align}
        j\sin(\a_1)+(N-j)\sin(\a_N)=0,
    \end{align}
    so that we can solve for $\a_N$ with respect to $\a_1$,
    \begin{align}
        \a_N&=\sin^{-1}\left(\frac{-j}{N-j}\sin(\a_1)\right),\\
        \cos(\a_N)&=\sqrt{1-\left(\frac{j}{N-j}\right)^2\sin^2(\a_1)}.
    \end{align}
    Plugging in for $\cos(\a_N)$ in \eqref{cos2} we can solve for $\cos(\a_1)$ and get $\cos(\a_1)=\frac{j}{\sqrt{(N-j)^2+j^2}}$ and therefore $\cos(\a_N)=\frac{N-j}{\sqrt{(N-j)^2+j^2}}$. In turn this yields $\cos^2(\a_1)+\cos^2(\a_N)=1$, and since $\a_1>0>\a_N$ we have $\a_1-\a_N=\frac{\pi}{2}$ as desired. Further, we see that $\l^*\geq\frac{\sqrt{2}}{2}$ as if $j\geq \frac{N}{2}$,
    \begin{align}
        \l^*&=\frac{1}{N}(j\cos(\a_1)+(N-j)\cos(\a_N)),\\
        &=\frac{j^2+(N-j)^2}{N\sqrt{(N-j)^2+j^2}},\\
        &\geq \frac{\sqrt{2}}{2}.
    \end{align}
    where equality only happens if $j=\frac{N}{2}$, further the largest $\l^*$ can be is exactly when $j=1$ so that,
    \begin{align}
        \l^*=\frac{1+(N-1)^2}{N\sqrt{(N-1)^2+1}}.
    \end{align}\\
    
    To get the maximal angle of opening, this occurs when $\O_1=\frac{M}{2}=-\O_N$ and $\O_j=0$ for $j\neq 1,N$. In this case $\a_j=0$ for all $j\neq 1,N$ and each of these $N-2$ oscillators are perfectly synchronized with the order parameter. Further symmetry gives $\a_1=-\a_N$ so that the equation for $\l^*$ becomes
    \begin{align}
        \l^*=\frac{1}{N}(2\cos(\a_1)+N-2)=\frac{1}{2N}\left(\frac{2}{\cos(\a_1)}+N-2\right).
    \end{align}
    Solving this for $\cos(\a_1)$ yields
    \begin{align}
        \cos(\a_1)=\frac{-N+2+\sqrt{N^2-4N+36}}{8}.
    \end{align}
    Returning to the angle of opening, as $\cos(\a_1)=\cos(\a_N)$, we have
    \begin{align}
    \a_1-\a_N&=2\cos^{-1}\left(\frac{-N+2+\sqrt{N^2-4N+36}}{8}\right).
    \end{align}
    Further, notice that the order parameter for this case is given by
    \begin{align}
        \l^*&=\frac{3(N-2)+\sqrt{N^2-4N+36}}{4N}<\frac{1+(N-1)^2}{N\sqrt{(N-1)^2+1}}, \ \ \text{for} \ N>\frac{7}{3}
    \end{align}
    the upper bound. Note $N=2$ returns exactly $\a_1-\a_2=\frac{\pi}{2}$ and $\l^*=\frac{\sqrt{2}}{2}$ as was determined previously. All other distributions of natural frequencies $\O_j$ at the critical threshold $\k^*$ produce synchronized states that lie within these bounds.
    
\end{proof}
As a sequence of corollaries we can further determine exactly the critical threshold for each of these configurations, which in turn gives a bound on $\k^*$ which depends on the number of wave functions $N$ and the mass of the natural frequencies $M=\frac{1}{N}\sum_{j=1}^N|\O_j|$.
\begin{corollary}
    For two distinct natural frequencies so that for some $1<j<N$, we have $\O_1=...=\O_j=M_1$ and $\O_{j+1}=...=\O_N=-M_2$ with
    \begin{align}
        jM_1=(N-j)M_2,
    \end{align}
    that way $\sum_{j=1}^N\O_j=0$ and $\sum_{j=1}^N|\O_j|=M$, the critical threshold for synchronization states existing is $\k^*=\frac{NM}{2j(N-j)}$.
\end{corollary}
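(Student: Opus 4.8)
The plan is to read the critical coupling straight off the defining asymptotic relation \eqref{lOmega}, namely $\O_j=\k\l\sin(\a_j)$, once the angles and the order parameter at $\k^*$ have been pinned down. For this two-frequency configuration the proof of Lemma \ref{k*bound} already computes all the geometric data, so the corollary reduces to solving that relation for $\k$ and rewriting $M_1,M_2$ in terms of the total mass $M=\sum_{l=1}^N|\O_l|$.

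First I would record what Lemma \ref{k*bound} supplies here. With $\a_1=\dots=\a_j$ and $\a_{j+1}=\dots=\a_N$, the argument there gives
\begin{align*}
\cos(\a_1)=\frac{j}{\sqrt{j^2+(N-j)^2}},\qquad \cos(\a_N)=\frac{N-j}{\sqrt{j^2+(N-j)^2}},
\end{align*}
and hence $\l^*=\frac{1}{N}(j\cos\a_1+(N-j)\cos\a_N)=\frac{\sqrt{j^2+(N-j)^2}}{N}$. Since $\a_1\in(0,\tfrac{\pi}{2})$ this forces $\sin(\a_1)=\sqrt{1-\cos^2\a_1}=\frac{N-j}{\sqrt{j^2+(N-j)^2}}$.

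Next I would apply \eqref{lOmega} to the first group, where $\O_1=M_1$. Substituting the quantities above,
\begin{align*}
M_1=\k^*\l^*\sin(\a_1)=\k^*\cdot\frac{\sqrt{j^2+(N-j)^2}}{N}\cdot\frac{N-j}{\sqrt{j^2+(N-j)^2}}=\k^*\,\frac{N-j}{N},
\end{align*}
so that $\k^*=\frac{NM_1}{N-j}$. It then remains to express $M_1$ through $M$: the balance condition $jM_1=(N-j)M_2$ together with $\sum_{l}|\O_l|=jM_1+(N-j)M_2=M$ yields $2jM_1=M$, i.e. $M_1=\frac{M}{2j}$, and substituting gives $\k^*=\frac{N}{N-j}\cdot\frac{M}{2j}=\frac{NM}{2j(N-j)}$, as claimed. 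As a consistency check I would run the same computation on the second group, using $\O_N=-M_2$ and $\sin(\a_N)=-\frac{j}{\sqrt{j^2+(N-j)^2}}$; this produces $\k^*=\frac{NM_2}{j}$, which after $M_2=\frac{M}{2(N-j)}$ returns the identical value.

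There is no real obstacle here: the content is a reorganization of quantities already derived in Lemma \ref{k*bound}, and the only care needed is bookkeeping the sign of $\sin\a_N$ and translating between $M_1,M_2$ and the total mass $M$. The one conceptual point worth emphasizing is that \eqref{lOmega} holds at every coupling admitting a phase-locked state, so evaluating it exactly at the critical angles (where $\a_1-\a_N=\frac{\pi}{2}$) is precisely what converts the geometric data of the opening angle into the threshold value of $\k$.
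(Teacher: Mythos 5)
Your proposal is correct and follows essentially the same route as the paper: both read off $\cos(\a_1)=\frac{j}{\sqrt{j^2+(N-j)^2}}$, $\cos(\a_N)=\frac{N-j}{\sqrt{j^2+(N-j)^2}}$ and $\l^*$ from the proof of Lemma \ref{k*bound}, then solve $\O_1=\l^*\k^*\sin(\a_1)$ for $\k^*=\frac{N M_1}{N-j}$ and convert $M_1=\frac{M}{2j}$ via the balance condition to obtain $\k^*=\frac{NM}{2j(N-j)}$. Your write-up is merely more explicit (including the consistency check on the second group) than the paper's two-line computation.
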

As we have already computed at the threshold that $\cos(\a_1)=\frac{j}{\sqrt{(N-j)^2+j^2}}$ and $\cos(\a_N)=\frac{N-j}{\sqrt{(N-j)^2+j^2}}$, we further get,
\begin{align}
    \sin(\a_1)&=\cos(\a_N),\\
    \sin(\a_N)&=-\cos(\a_1).
\end{align}
Therefore,
\begin{align}
    \O_1&=\l^*\k^*\sin(\a_1),\\
    \k^*&=\frac{N\O_1}{N-j}=\frac{NM}{2j(N-j)}.
\end{align}

\begin{corollary}
    For $\O_1=\frac{M}{2}=-\O_N$ and $\O_j=0$ for all  $j\neq 1,N$, we can compute the critical threshold as
    \begin{align}
        \k^*=\frac{16NM}{(3(N-2)+\sqrt{N^2-4N+36})\sqrt{24+8N-2N^2+2(N-2)\sqrt{N^2-4N+36}}}
    \end{align}
\end{corollary}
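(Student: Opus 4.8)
The plan is to combine the geometric data already extracted in Lemma \ref{k*bound} for this maximal-opening configuration with the defining relation \eqref{lOmega} between the natural frequencies and the asymptotic angles. No new geometry is needed; the corollary is essentially a substitution followed by an algebraic simplification.

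First I would recall from the proof of Lemma \ref{k*bound} that, in the configuration $\O_1=\frac{M}{2}=-\O_N$ with $\O_j=0$ for $j\neq 1,N$, one has $\a_j=0$ for all $j\neq 1,N$, the symmetry $\a_1=-\a_N>0$, and the two explicit quantities
\begin{align}
    \cos(\a_1)=\frac{-N+2+\sqrt{N^2-4N+36}}{8},\qquad \l^*=\frac{3(N-2)+\sqrt{N^2-4N+36}}{4N}.
\end{align}
Next I would invoke relation \eqref{lOmega}, which at the critical coupling reads $\O_j=\l^*\k^*\sin(\a_j)$. Applying it at the index $j=1$, where $\O_1=\frac{M}{2}$, immediately gives
\begin{align}
    \k^*=\frac{M}{2\l^*\sin(\a_1)}.
\end{align}

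The remaining step is purely computational: I would express $\sin(\a_1)=\sqrt{1-\cos^2(\a_1)}$ (the positive root, since $\a_1\in(0,\frac{\pi}{2})$) using the value of $\cos(\a_1)$ above. Writing $D=\sqrt{N^2-4N+36}$ and using $D^2=N^2-4N+36$ together with $(N-2)^2=N^2-4N+4$, one finds $\cos^2(\a_1)=\frac{2N^2-8N+40-2(N-2)D}{64}$, so that
\begin{align}
    1-\cos^2(\a_1)=\frac{24+8N-2N^2+2(N-2)D}{64},\qquad \sin(\a_1)=\frac{\sqrt{24+8N-2N^2+2(N-2)D}}{8}.
\end{align}
Substituting this together with the value of $\l^*$ into $\k^*=\frac{M}{2\l^*\sin(\a_1)}$ and simplifying the constant factors yields the claimed expression for $\k^*$.

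The only place to be careful is the bookkeeping inside the square root, so I would keep the abbreviation $D$ throughout and only expand it at the end; the sign of $\sin(\a_1)$ must also be taken positive, consistent with $\a_1>0$. There is no conceptual obstacle, since all the geometric quantities were already pinned down in Lemma \ref{k*bound}; the content of the corollary is simply that the critical coupling is recovered from the angle and order parameter via \eqref{lOmega}.
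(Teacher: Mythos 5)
Your proposal is correct and follows exactly the route the paper intends: the paper leaves this corollary's computation implicit, but its proof of the preceding corollary uses the identical mechanism, namely $\O_1=\l^*\k^*\sin(\a_1)$ from \eqref{lOmega} combined with the values of $\cos(\a_1)$ and $\l^*$ established in Lemma \ref{k*bound}. Your algebra checks out (including the sanity check $N=2$, which recovers $\k^*=M=2\O_1$), so there is nothing to add.
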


\begin{corollary}
    For $N$ fixed and $\O_1\geq ...\geq \O_N$ such that $\sum_{j=1}^N\O_j=0$ and $\sum_{j=1}^N|\O_j|=M$, the critical value $\k^*$ can be found between the following bounds, depending on the distribution of the natural frequencies $\O_j$,
    \begin{align}
        \frac{2M}{N}\leq \k^*\leq \frac{16NM}{(3(N-2)+\sqrt{N^2-4N+36})\sqrt{24+8N-2N^2+2(N-2)\sqrt{N^2-4N+36}}}\leq M,
    \end{align}
    where all inequalities become equalities at $N=2$.
\end{corollary}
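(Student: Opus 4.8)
The plan is to collapse every admissible configuration to one closed expression for $\k^*$ and then optimize over frequency distributions. From the fixed-point relations of Lemma \ref{l:phase-locked} we have $\O_j=\l\k\sin(\a_j)$ and $\l=\frac{1}{N}\sum_{j=1}^N\cos(\a_j)$; since $\sin(\a_j)$ carries the sign of $\O_j$, summing absolute values gives $M=\l\k\sum_j|\sin(\a_j)|$, so at the critical coupling
\begin{align}
    \k^*=\frac{M}{\l\sum_{j=1}^N|\sin(\a_j)|}=\frac{NM}{\left(\sum_{j=1}^N\cos(\a_j)\right)\left(\sum_{j=1}^N|\sin(\a_j)|\right)}.
\end{align}
Writing $P:=\big(\sum_j\cos\a_j\big)\big(\sum_j|\sin\a_j|\big)$, the whole statement reduces to bounding $P$ above (for the lower bound on $\k^*$) and below (for the upper bound), together with one algebraic comparison.

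\textbf{Lower bound.} This is the easy direction and uses only that stable states have $\a_j\in(-\frac{\pi}{2},\frac{\pi}{2})$, so all cosines are positive. I would apply AM--GM to the two nonnegative sums and then $\cos\th+\sin\th\le\sqrt2$ termwise:
\begin{align}
    P\le\frac14\Big(\sum_{j=1}^N(\cos\a_j+|\sin\a_j|)\Big)^2\le\frac14\big(N\sqrt2\big)^2=\frac{N^2}{2},
\end{align}
whence $\k^*=\frac{NM}{P}\ge\frac{2M}{N}$. Equality forces every $|\a_j|=\frac{\pi}{4}$ with balanced signs, i.e. the even-$N$, two-distinct-frequency state with $j=\frac{N}{2}$, recovering the minimum of the formula $\k^*=\frac{NM}{2j(N-j)}$.

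\textbf{Upper bound (the crux).} This requires the reverse inequality, i.e. that $P$ is \emph{minimized} by the single-pair configuration $\O_1=\frac M2=-\O_N$, $\O_j=0$ otherwise — the state Lemma \ref{k*bound} already identifies as having maximal angle of opening. I would set this up as a constrained minimization of $P$ subject to the threshold identity $\sum_j\big(2\cos\a_j-\sec\a_j\big)=0$ (equivalent to $\l=\frac{1}{2N}\sum_j\sec\a_j$ in \eqref{lstar}) and the balance $\sum_j\sin\a_j=0$. The delicate part, and where I expect the real work, is the boundary/combinatorial structure of the optimizer: one must show the minimizer pushes $N-2$ oscillators to $\a_j=0$ and keeps a single antipodal pair, at which point stationarity collapses to $4\cos^2\a_1+(N-2)\cos\a_1-2=0$, giving $\cos\a_1=\frac{-N+2+\sqrt{N^2-4N+36}}{8}$ as in Lemma \ref{k*bound}. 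Substituting this pair into the displayed formula and using the identity $64(1-\cos^2\a_1)=24+8N-2N^2+2(N-2)\sqrt{N^2-4N+36}$ reproduces the middle expression exactly. To certify global optimality I would invoke the extremal analysis of Lemmas \ref{l:opening}--\ref{k*bound} together with the monotone dependence of $\l$, and hence of $\k^*$, on the angle of opening exhibited in the computations of Section \ref{AV}.

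\textbf{The comparison $\le M$ and the case $N=2$.} Writing the single-pair value as $\k^*=\frac{M}{2\l\sin\a_1}$ (from $\O_1=\frac M2=\l\k^*\sin\a_1$), the bound $\k^*\le M$ is precisely $2\l\sin\a_1\ge1$. Here $\l\ge\frac{\sqrt2}{2}$ by Lemma \ref{k*bound}, while $\sin\a_1\ge\frac{\sqrt2}{2}$ follows from $\cos\a_1\le\frac{\sqrt2}{2}$, i.e. from $\sqrt{N^2-4N+36}\le N-2+4\sqrt2$, which on squaring is just $0\le8\sqrt2(N-2)$. Hence $2\l\sin\a_1\ge2\cdot\frac{\sqrt2}{2}\cdot\frac{\sqrt2}{2}=1$. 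When $N=2$ all three bounds equal $M$: then $\frac{2M}{N}=M$, and $\cos\a_1=\frac{\sqrt2}{2}$ forces $\a_1=\frac{\pi}{4}$, $\l=\frac{\sqrt2}{2}$, so the middle expression also equals $M$, producing the stated chain of equalities.
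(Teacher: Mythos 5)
Your proposal is correct, and it differs from the paper's route in a useful way. The paper obtains the corollary by evaluating $\k^*$ in closed form on the two families singled out as extremal in Lemmas \ref{l:opening} and \ref{k*bound}: the two-frequency bipolar states give $\k^*=\frac{NM}{2j(N-j)}$, whose minimum over $j$ (at $j=N/2$) is $\frac{2M}{N}$, and the single antipodal pair gives the middle expression; the corollary then reads off the bounds from the assertion that these families realize the extremes, and the final comparison with $M$ is stated without any computation. You instead package everything into the identity $\k^*=NM/P$ with $P=\big(\sum_j\cos\a_j\big)\big(\sum_j|\sin\a_j|\big)$, which yields two genuine improvements: your AM--GM argument bounds $P\le N^2/2$ for \emph{every} admissible configuration, so the lower bound $\k^*\ge\frac{2M}{N}$ is proved directly, without needing to know that the bipolar family minimizes $\k^*$ (a minimality the paper never actually establishes over all configurations); and your verification that the middle expression is $\le M$, via $2\l\sin\a_1\ge1$ with both factors bounded below by $\frac{\sqrt2}{2}$, supplies an argument the paper omits entirely. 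On the upper bound, however, your route and the paper's coincide at the same crux: both reduce to the claim that the configuration $\O_1=\frac M2=-\O_N$, $\O_j=0$ otherwise, maximizes $\k^*$, and you (like the paper) certify this only by appeal to the extremal analysis of Lemmas \ref{l:opening}--\ref{k*bound}, whose own treatment of that extremality is an assertion plus computation rather than a completed constrained-optimization proof; so you are no worse off than the paper here, and you are candid about it. Your algebra at the extremal pair checks out: the relation $4\cos^2\a_1+(N-2)\cos\a_1-2=0$ is precisely the threshold identity \eqref{lstar} restricted to that family (not an independent stationarity condition), $64\sin^2\a_1$ equals the radicand in the middle expression, and the $N=2$ chain of equalities follows as you state.
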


\section{Stability of the Phase Locked State}\label{S:Stable}
In the Section \ref{S:Lyapunov}, convergence to the phase-locked state was provided within certain regimes. In particular it was necessary that $\k>2\sqrt{\frac{11+8\sqrt{2}}{7}}\max_j |\O_j|$. This condition is not sharp due to the fact that we lost some space by utilizing Young's inequality. However, thanks to Theorem \ref{Kur-SL} we know that fixed points of \eqref{correlations} exist exactly when fixed  points of the Kuramoto model exist. In this section we also confirm the stability analysis of such fixed points. We proceed via analysis of the Jacobian of the fixed point map.

Returning to the system of correlations, we rewrite it here,
\begin{align}\label{corr}
    \ddt z_{jk}&=i(\O_j-\O_k)z_{jk} +\frac{\k}{2N}\sum_{l=1}^N(z_{jl}+z_{lk})(1-z_{jk}), \ \ j,k=1,...,N.
\end{align}
Recall $z_{jj}\equiv 1$ as the mass is conserved and $z_{jk}=\bar{z}_{kj}$, so we have a system of $\frac{1}{2}N(N-1)$ unknowns. However, for symmetry and structural reasons we will consider the stability of the system of $N(N-1)$ unknowns, where we have only excluded $z_{jj}=1$ for each $j=1,...,N$ (Note that this exclusion essentially eliminates an eigenvalue $\l=0$ of multiplicity $N$ that is inherent to the system due to the rotational invariance of the original Schr\"odinger-Lohe model \eqref{eq:SL_or}). Fixed points of \eqref{corr} satisfy
\begin{align}\label{Fmap}
    F_{jk}(\bz)=-\frac{i(\O_j-\O_k)}{\k}z_{jk}-\frac{1}{2N}\sum_{l=1}^N(z_{jl}+z_{lk})(1-z_{jk})=0,
\end{align}
for each $j\neq k$. In this way we can consider the diagonal forcing term with the natural frequencies as a perturbation of the homogeneous system where all $\O_j\equiv 0$. As the dynamics governed by \eqref{corr} are holomorphic we can compute the Jacobian matrix of the $F$-map, using each $z_{jk}$ as a variable to get the following $N(N-1) \times N(N-1)$ matrix
\[
D_{\bz}F(\bz) = \begin{blockarray}{ccccccccc}
z_{12} & \dots & z_{1N} & z_{21} & \dots & z_{2N} & \dots & z_{NN-1}\\
\begin{block}{(cccccccc)c}
  d_{1,1} & \dots & \dots & \dots & \dots & \dots & \dots & d_{1,N(N-1)} & z_{12} \\
  \vdots & \ddots &  &  &   & & & \vdots & \vdots \\
  \vdots &  & \ddots &  &  &  & & \vdots & z_{1N} \\
 \vdots &  &  & \ddots &  &  & & \vdots & z_{21} \\
 \vdots &  &  &  & \ddots & &  & \vdots & \vdots \\
 \vdots &  &  &  &  &  \ddots & & \vdots & z_{2N} \\
 \vdots &  &  &  &  &   & \ddots & \vdots & \vdots \\
 d_{N(N-1),1} & \dots & \dots   & \dots & \dots & \dots & \dots & d_{N(N-1),N(N-1)} & z_{NN-1}\\
\end{block}
\end{blockarray}
\]
where each $d_{n,m}$ is computed as
\begin{align}
    d_{n,m}=\partial_{z_{jk}} F_{il}(\bz),
\end{align}
where $j=1$ for $n\in\{1,...,N-1\}$, then $j=2$ for $n\in\{N,...,2N-2\}$, and so on, while $k=2,3,4,...,N$ (skipping $k=1$)for $n=1,2,3,...,N-1$ and then $k=1,3,4,...,N$ (skipping $k=2$) for $n=N,N+1,...,2N-2$ and so on. The indices $i,l$ are found identically except they are given corresponding to the value of the column index $m$.\\

As a simple illustrative example the matrix for $N=3$ takes the following form
\[
D_{\bz}F(\bz) = \begin{blockarray}{ccccccc}
z_{12} & z_{13} & z_{21} & z_{23} & z_{31} & z_{32}\\
\begin{block}{(cccccc)c}
  d_{1,1} & \dots & \dots & \dots & \dots & d_{1,6} & z_{12}\\
  \vdots & \ddots &  & & & \vdots & \vdots \\
  \vdots &  & \ddots &  & & \vdots & \vdots \\
 \vdots &  &  & \ddots &  & \vdots & \vdots \\
 \vdots &  &   &   & \ddots & \vdots & \vdots \\
 d_{6,1} & \dots &  \dots & \dots & \dots & d_{6,6} & z_{32}\\
\end{block}
\end{blockarray}
\]
where for example we can see that
\begin{align}
    d_{2,3}=\partial_{z_{13}}F_{21}(\bz), \ \ d_{5,2}=\partial_{z_{31}}F_{13}(\bz), \ \ d_{4,4}=\partial_{z_{23}}F_{23}(\bz).
\end{align}
Returning to the genearl matrix, many of the elements will be zero, as if $j\neq i$ and $k\neq l$, then  $d_{n,m}=\partial_{z_{jk}} F_{il}(\bz)=0$.
The elements on the main diagonal will never be zero and are given by
\begin{align}
    d_{n,n}=\partial_{z_{jk}}F_{jk}(\bz)=-\frac{i(\O_j-\O_k)}{\k}+\frac{1}{2N}\sum_{l=1}^N(z_{jl}+z_{lk})-\frac{1}{N}(1-z_{jk}).
\end{align}

Each row and column will have $2(N-2)$ nonzero terms off the main diagonal given by
\begin{align}\label{eq:indexmatch}
    \partial_{z_{ji}}F_{jk}(\bz)=\frac{-1}{2N}(1-z_{jk})=\partial_{z_{ik}}F_{jk}(\bz).
\end{align}
where these terms correspond to the matching up of exactly one index, which can be seen in \eqref{eq:indexmatch}.\\

Note that in what follows we are looking at the eigenvalues of the these Jacobians in order to determine the linear stability. In the definition of \eqref{Fmap} we consider the negative of the usual fixed point map, so that stability is achieved when all eigenvalues have positive real part.

\subsection{Stability of Fixed points in the Homogeneous regime}
\begin{lemma}\label{l:homostable}
    In the unperturbed case where all $\O_j\equiv 0$, all of the fixed points of \eqref{corr} and their corresponding linear stability are given by
\begin{itemize}
    \item Stable; Fully Synchronized: $z_{jk}\equiv 1$ for all $j,k=1,...,N$, 
    \item Saddle points; Bipolar states: There exists a set of indices $\mathcal{I}$ such that if $j,k\in \mathcal{I}$, then $z_{jk}=1$, and if $j,k \not\in \mathcal{I}$, then $z_{jk}=1$, while if exactly one of $j$ or $k$ is in $\mathcal{I}$, then $z_{jk}=-1$.
    \item Repulsive; Incoherence: The wave functions are equally distributed so that $\zeta=0$, and $z_{jk}=e^{\frac{2\pi i}{N}(j-k)}$.
    \item Repulsive; Trivial: $z_{jk}\equiv 0$ for all $j\neq k$.
\end{itemize} 
\end{lemma}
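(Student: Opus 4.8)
The plan is to first rewrite \eqref{Fmap} in the homogeneous case $\O_j\equiv0$ in terms of the order-parameter correlations $z_j=\langle\zeta,\psi_j\rangle$. Since $z_j=\frac1N\sum_l z_{lj}$, one has $\sum_l z_{lk}=Nz_k$ and $\sum_l z_{jl}=N\bar z_j$, so the whole coupling sum collapses and the fixed-point equations become, for every $j\neq k$, the single clean dichotomy
\begin{equation*}
(\bar z_j+z_k)(1-z_{jk})=0 .
\end{equation*}
Thus at any fixed point each pair $(j,k)$ is either \emph{coherent} ($z_{jk}=1$) or \emph{balanced} ($\bar z_j+z_k=0$). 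This identity is what organizes the entire classification.

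\textbf{Enumeration of the fixed points.} Next I would classify all solutions of the dichotomy and confirm each listed state. The constraint $z_{jk}=1$ together with $\|\psi_j\|_2=\|\psi_k\|_2=1$ and Cauchy--Schwarz forces $\psi_j=\psi_k$, so coherence is an equivalence relation that partitions $\{1,\dots,N\}$ into synchronized clusters carrying $z\equiv1$; across distinct clusters every pair must be balanced. Running through the admissible cluster structures, constrained by positive semidefiniteness of the Gram matrix $(z_{jk})$, recovers the listed configurations: a single cluster gives full synchronization; exactly two anti-aligned clusters give the bipolar states with $z_{jk}=-1$ across $\mathcal I$ (here each $z_j$ is real and the cross terms balance); and the configurations with $\zeta=0$, i.e. $z_j\equiv0$, make the balance condition automatic and include the incoherent root-of-unity state $z_{jk}=e^{2\pi i(j-k)/N}$. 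Each of the four listed configurations is then checked by direct substitution into \eqref{Fmap}; this substitution step is precisely where any misclassified configuration would surface, and the reverse (exhaustiveness) direction rests on the Gram-positivity bookkeeping just described.

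\textbf{Linearization and the key structural observation.} For the stability claims I would linearize the holomorphic map $F$. Because $F$ is holomorphic, the eigenvalues of the real $2N(N-1)\times 2N(N-1)$ Jacobian are exactly the pairs $\{\mu,\bar\mu\}$ associated to the eigenvalues $\mu$ of the complex Jacobian $D_{\bz}F$, so (with the sign convention of \eqref{Fmap}) a fixed point is stable iff every $\mu$ satisfies $\Re\mu>0$. The decisive observation is that, by \eqref{eq:indexmatch}, every off-diagonal entry in the row indexed by $(j,k)$ is proportional to $(1-z_{jk})$. Hence at any fixed point the rows of coherent pairs have vanishing off-diagonal part, and $D_{\bz}F$ is block-triangular for the splitting into coherent and balanced pairs. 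For full synchronization every pair is coherent, the $-\tfrac1N(1-z_{jk})$ term also vanishes, and $D_{\bz}F$ reduces to the identity, giving all eigenvalues equal to $1$ and hence stability outright. For the bipolar states the triangular structure exposes, on the within-cluster coherent pairs, diagonal entries $(2p-N)/N$ on $\mathcal I$ and $(N-2p)/N$ on its complement (with $p=|\mathcal I|$) of opposite sign, forcing a saddle (with the symmetric $p=N/2$ case falling into the degenerate regime noted after Lemma \ref{l:phase-locked}).

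\textbf{Main obstacle.} The genuinely hard step is the spectrum of the remaining fully-coupled block at the incoherent (and trivial) states, where no pair is coherent, so nothing decouples: $D_{\bz}F$ links each $(j,k)$ to every pair sharing one index with weights $-\tfrac1{2N}(1-z_{jk})$ and diagonal $-\tfrac1N(1-z_{jk})$, and the matrix is non-normal. Here I would exploit the cyclic symmetry: since $z_{jk}=e^{2\pi i(j-k)/N}$ depends only on $j-k$, the Jacobian is block-circulant in both indices and can be simultaneously reduced by the two-dimensional discrete Fourier transform on $(j,k)$, yielding an explicit eigenvalue list whose real parts one checks to be negative, establishing repulsiveness; the constant-weight analogue treats the remaining state. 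The twin difficulties I expect to dominate are this Fourier diagonalization together with the sign determination of the resulting eigenvalues, and the exhaustiveness argument in the enumeration step ensuring that Gram-positivity rules out any configuration beyond those listed.
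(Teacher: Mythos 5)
Your two structural observations are correct and genuinely useful: in the homogeneous case the fixed-point equations \eqref{Fmap} do collapse to $(\bar z_j+z_k)(1-z_{jk})=0$, and by \eqref{eq:indexmatch} every row of the Jacobian indexed by a coherent pair ($z_{jk}=1$) has vanishing off-diagonal part, so $D_{\bz}F$ is block-triangular with a diagonal coherent block. This is a cleaner framing than the paper's entry-by-entry computations, and for the fully synchronized state your conclusion coincides with the paper's ($D_{\bz}F=I$, all eigenvalues equal to $1$, stable). The proposal nevertheless has genuine gaps in the three remaining bullets.

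For the bipolar states, your saddle argument fails exactly in the case the paper treats. The ``opposite-sign diagonal entries'' $(2p-N)/N$ and $(N-2p)/N$ exist only when both clusters contain at least two indices; for $p=|\mathcal I|=1$ or $p=N-1$ (by \cite{HHK} the only bipolar states reachable from generic data, and the case the paper actually computes) there are no within-$\mathcal I$ pairs, so the coherent block contributes only the positive eigenvalues $(N-2)/N$, and the required negative eigenvalues must be extracted from the balanced (cross-pair) block, which you never analyze. That block is where the paper's work lies; for $p=1$ it decouples into two copies of $-\frac1N(I+J)$, with $J$ the $(N-1)\times(N-1)$ all-ones matrix, whose eigenvalues $-1$ and $-\frac1N$ supply the unstable directions. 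For the incoherent state, your key mechanism is wrong: since every entry of row $(j,k)$ carries the scalar factor $1-e^{2\pi i(j-k)/N}$, the Jacobian is invariant only under the diagonal shift $(j,k)\mapsto(j+1,k+1)$, not under independent shifts of $j$ and $k$, so it is not a two-dimensional convolution and the 2D discrete Fourier transform does not reduce it; at best a one-dimensional Fourier reduction into $N$ blocks of size $N-1$ is available, and the sign determination you yourself call decisive is left entirely unproven. (The paper instead rewrites this Jacobian as a zero-row-sum matrix minus a dominant diagonal and appeals to Perron--Frobenius-type reasoning.)

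Finally, the proposal is internally inconsistent about the trivial state, and your own (correct) dichotomy exposes it: since $z_{jj}\equiv1$, at $z_{jk}=0$ for $j\neq k$ one has $z_j=\frac1N\sum_l z_{lj}=\frac1N$, hence $(\bar z_j+z_k)(1-z_{jk})=\frac2N\neq0$, so the trivial configuration is not a zero of \eqref{Fmap} at all. The ``direct substitution'' you defer to would therefore falsify, rather than confirm, the fourth bullet, and the promised ``constant-weight analogue'' has nothing to linearize around; you cannot simultaneously assert the dichotomy and assert that all four listed configurations pass it. For the same reason the exhaustiveness step cannot close as described: the balance condition holds automatically on the entire class $z_j\equiv0$ (equivalently $\zeta=0$), so every configuration with vanishing order parameter is a fixed point --- e.g.\ $N=4$ with $\psi_3=-\psi_1$, $\psi_4=-\psi_2$, $\psi_1\perp\psi_2$ --- and Gram positivity does not cut this class down to the single root-of-unity state in the list. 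These issues touch the statement of the lemma itself and must be confronted explicitly rather than absorbed into a substitution check that is never carried out.
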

In \cite{HHK} the authors proved uniform $L^2$ stability of the Fully Synchronized state. Further, they showed that (excluding initial data where $\psi_j(0)=\psi_k(0)$ for $j\neq k$) the only possible Bipolar state that can be reached is given with only one wave function within the set $\mathcal{I}$ and showed instability of this state via a reduction to the $N=2$ case. The content of Lemma \ref{l:homostable} provides the linear stability of each of the steady states studied in \cite{HHK}, while also providing the stability of the two repulsive states given by Incoherence and the Trivial solution.

\begin{proof}
Considering the system of equations \eqref{corr}, the trivial solution $z_{jk}=0$ for $j\neq k$ is clearly a solution. To see the rest of the configurations, letting $\O_j\equiv 0$ and considering Lemma \ref{l:phase-locked}, we see that the above configurations are all of the fixed points of \eqref{corr}.  

In the Fully Synchronized case the entries of the Jacobian matrix we just computed are given by
\begin{align}
    &d_{n,n}=1,\\
    &d_{n,m}=0, \ \ \text{for} \ n\neq m,
\end{align}
that is, $D_{\bz}F(\bz)=I$ and all eigenvalues $\mu_j=1$ and the system is stable.\\

In the Bipolar synchronized states, let us consider the case of $\mathcal{I}=1$ so that only one wave function is balanced on the other side. In this case we see that
\begin{align}
    &d_{n,n}=\frac{N-2}{N}, \ \ \text{if} \ j,k\neq 1,\\
    &d_{n,n}=-\frac{2}{N}, \ \ \text{if} \ j=1, \ \text{or} \ k=1.
\end{align}
The off diagonal terms for when $j,k\neq 1$  are all zero. While if $j=1$ or $k=1$, there are exactly two off diagonal terms in each of the corresponding rows and columns of the Jacobian matrix, given by $d_{n,m}=\frac{1}{N}$.

Indeed, by rearranging the rows and columns this means the Jacobian matrix is given by a two block diagonal matrix. The first block is an $(N-1)(N-2) \times (N-1)(N-2)$ negative Laplacian matrix, and the second  block is a positive diagonal with $d_{n,n}=\frac{N-2}{N}$. Therefore there exists one eigenvalue $\mu_0=0$, and $(N-1)(N-2)-1$ negative eigenvalues $\mu_j=-\frac{2}{N}$, and the rest of the eigenvalues positive given by  $\mu_j=\frac{N-2}{N}$. In the same way all Bipolar states are saddle points.\\

The Incoherent state yields a Jacobian matrix with the terms
\begin{align}
    d_{n,n}=-\frac{1}{N}(1-e^{\frac{2\pi i}{N}(j-k)})
\end{align}
on the main diagonal, and the nonzero terms on the offdiagonal by
\begin{align}
    d_{n,m}=-\frac{1}{2N}(1-e^{\frac{2\pi i}{N}(j-k)})
\end{align}
By adding and subtracting the value $(N-2)\left(\frac{1}{N}\left(1-e^{\frac{2\pi i}{N}(j-k)}\right)\right)$ on the main diagonal appropriately we reach a Laplacian matrix minus a diagonal matrix that is more than twice as large as the diagonal of the Laplacian. The Perron-Frobenius theory (see \cite{HJ}) gives all eigenvalues of such a matrix to have negative real part, making the incoherent state a repulsive fixed point.\\

Last, the trivial state where all $z_{jk}=0$ is important to check as a priori in $L^2$ it is possible for all wave functions to end up orthogonal to each other. However, just as in the incoherent state, we achieve a repulsive fixed point. The main diagonal is given by $d_{n,n}=-\frac{1}{N}$, and the nonzero off diagonal terms by $d_{n,m}=-\frac{2}{N}$. Therefore adding and subtracting $\frac{1}{N}(N-2)$ on the main diagonal again yields a Laplacian matrix minus a diagonal matrix that is more than twice as large as the diagonal of the Laplacian.\end{proof}

\subsection{Stability of the fixed points of the inhomogeneous regime} We will use the computation in the previous subsection as inspiration for computing the linear stability in the case of heterogeneous natural frequencies where $\O_j \not\equiv 0$. Within this section we will utilize theory on matrix and eigenvalue perturbations, see \cite{SS} for the relevant background.

\begin{theorem}\label{t:unique}
   Let $\varepsilon=\frac{1}{\k}$, then for $\varepsilon>0$ small enough, so that at least $\k>\k^*$ determined in Section \ref{AV}, then the fixed point of \eqref{corr} determined by Lemma \ref{l:phase-locked}, is linearly stable. The fixed point is given by
    \begin{align}
        z_{jk}=e^{i(\a_j-\a_k)},
    \end{align}
    where $\a_j\in (-\frac{\pi}{2},\frac{\pi}{2})$ satisfy 
    \begin{align}
        \a_j&=\sin^{-1}\left(\frac{\O_j}{\l\k}\right),\\
        \cos(\a_j)&=\sqrt{1-\left(\frac{\O_j}{\l\k}\right)^2}\geq 0,\\
        \l&=\frac{1}{N}\sum_{j=1}^N\cos(\a_j),
    \end{align} for each $j=1,...,N$. Under the extra Assumption \ref{assumption}, we can further compute the perturbed eigenvalues of the Jacobian matrix and conclude linear stability for all $\k\geq 2\sqrt{2}\max_{j}|\O_j|$. 
\end{theorem}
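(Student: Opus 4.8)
The plan is to treat the inhomogeneous Jacobian $D_{\bz}F(\bz)$ as a perturbation of its homogeneous counterpart from Lemma~\ref{l:homostable}, with perturbation parameter $\varepsilon = 1/\k$. First I would record that for $\k > \k^*$ (Section~\ref{AV}) the fixed point $z_{jk} = e^{i(\a_j - \a_k)}$ exists by Lemma~\ref{l:phase-locked} and Theorem~\ref{Kur-SL}, and that since $\a_j = \sin^{-1}(\O_j \varepsilon / \l)$ with $\l \to 1$ as $\varepsilon \to 0$, one has $\a_j \to 0$ and $z_{jk} \to 1$. Thus the fixed point limits onto the fully synchronized state, and the computation in Lemma~\ref{l:homostable} shows $D_{\bz}F(\varepsilon) \to I$ as $\varepsilon \to 0$.

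For the first assertion (stability for $\varepsilon$ small) I would invoke continuity of the spectrum under matrix perturbation (cf.\ \cite{SS}). The limit matrix $I$ has the single eigenvalue $1$, of positive real part and multiplicity $N(N-1)$; since the eigenvalues depend continuously on the entries of $D_{\bz}F(\varepsilon)$, there is $\varepsilon_0 > 0$ such that for $0 < \varepsilon < \varepsilon_0$ every eigenvalue lies in the open right half-plane. As $F$ is defined as the negative of the usual fixed-point map, this is exactly linear stability, and the only role of $\k > \k^*$ is to guarantee that the fixed point exists at all.

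For the quantitative threshold under Assumption~\ref{assumption} I would Taylor expand $D_{\bz}F(\varepsilon) = I + \varepsilon D_1 + \varepsilon^2 D_2 + O(\varepsilon^3)$. Using $1 - z_{jk} = -i(\O_j - \O_k)\varepsilon + O(\varepsilon^2)$ together with $\sum_l \O_l = 0$, a direct evaluation of the diagonal entries and of the matching-index entries \eqref{eq:indexmatch} shows that the first-order correction factorizes as $D_1 = i M$, where $M = \diag(\O_j - \O_k)\, S$ is real and $S = (\tfrac1N - \tfrac12) I + \tfrac{1}{2N} A$ is the $\O$-independent, real symmetric structural matrix whose off-diagonal support $A$ records the pairs of indices sharing exactly one coordinate. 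The decisive structural fact is that $D_1$ is purely imaginary in direction: to first order the degenerate eigenvalue $1$ can only split along the imaginary axis, so the real parts stay at $1 + O(\varepsilon^2)$ and stability is decided beyond first order.

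The maximal degeneracy of the eigenvalue $1$ is the main obstacle: non-degenerate perturbation theory is unavailable, the first-order splitting requires diagonalizing $D_1$ on the full $N(N-1)$-dimensional eigenspace, and the stability-deciding information sits at second order, where $M = \diag(\O_j - \O_k)S$ is a product of a real diagonal and a real symmetric matrix and hence need not have real spectrum a priori. Assumption~\ref{assumption} is precisely what makes this tractable: under it I would show that $M$ has purely real spectrum, so the $\varepsilon D_1$ corrections are genuinely imaginary and symmetric about $\Re = 1$, and then estimate the $O(\varepsilon^2)$ contribution to each $\Re \mu_j$. Here Corollary~\ref{l:rbound3} supplies the key input: for $\k \geq 2\sqrt{2}\max_j |\O_j|$ one has $r_{jk}^{\infty} = \cos(\a_j - \a_k) \geq \tfrac{\sqrt2}{2}$, i.e.\ $|\a_j - \a_k| \leq \tfrac{\pi}{4}$, which bounds the entries $-\tfrac{1}{2N}(1 - z_{jk})$ uniformly and allows me to close the second-order estimate and conclude $\Re \mu_j > 0$ across the whole stated range.
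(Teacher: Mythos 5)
Your treatment of the first assertion --- writing $D_{\bz}F(\bz)=I+E$ with $E\to 0$ as $\varepsilon\to 0$ and invoking continuity of the spectrum, with $\k>\k^*$ serving only to guarantee existence of the fixed point --- is exactly the paper's argument, and your structural computation of the first-order term, $D_{\bz}F=I+i\varepsilon M+O(\varepsilon^2)$ with $M=\diag(\O_j-\O_k)\,S$ real, is correct and consistent with the paper (real parts of the eigenvalues deviate from $1$ only at order $\varepsilon^2$).

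The gap is in the quantitative part. The paper does not expand around $I$ to reach the threshold $\k\geq 2\sqrt{2}\max_j|\O_j|$: it expands around the diagonal matrix $A=\diag\{e^{i(\a_j-\a_k)}\}$, writing $D_{\bz}F(\bz)=A+E^*$. The entire role of Assumption \ref{assumption} is to make the diagonal entries of $A$ pairwise distinct, hence every eigenvalue of $A$ simple, so that \emph{non-degenerate} first-order perturbation theory applies and yields
\begin{align}
\tilde{\mu}_{jk}=\frac{\l}{2}\left(e^{i\a_k}+e^{-i\a_j}\right)-\frac{1}{N}\left(1-e^{i(\a_j-\a_k)}\right)+O(\|E^*\|^2),
\end{align}
whose real part is bounded below by $\frac{1}{2}-\frac{1}{N}$ via Corollary \ref{l:rbound3}; stability is then decided by an explicit, strictly positive first-order quantity. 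By keeping $I$ as the unperturbed matrix you sit in the maximally degenerate situation ($\mu=1$ with multiplicity $N(N-1)$), your first-order correction has zero real part, and the entire stability question is pushed to second-order degenerate perturbation theory, which you never carry out. Two concrete failures follow. First, you assign Assumption \ref{assumption} the job of forcing $M=\diag(\O_j-\O_k)S$ to have real spectrum; that misreads what the assumption does (it only asserts distinctness of the differences $\O_j-\O_k$), and since the diagonal factor $\diag(\O_j-\O_k)$ is indefinite (the pairs $(j,k)$ and $(k,j)$ carry opposite signs) the usual similarity to a symmetric matrix by conjugation with its square root is unavailable, so reality of the spectrum of $M$ is neither proved nor evidently true. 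Second, even granting real spectrum for $M$, the sign of the $O(\varepsilon^2)$ real-part corrections is governed by a genuine second-order computation (diagonalizing $D_1$ on the full $N(N-1)$-dimensional eigenspace, then terms involving $D_2$ and reduced resolvents); Corollary \ref{l:rbound3} only bounds the \emph{size} of matrix entries, and bounds on magnitudes cannot determine the \emph{sign} of these corrections, let alone down to the specific finite value $\k=2\sqrt{2}\max_j|\O_j|$. The paper's choice of $A$ is precisely the device that avoids all of this: it absorbs the $O(\varepsilon)$ purely imaginary splitting into the unperturbed matrix, so the stability-deciding real contribution appears explicitly at first order.
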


We now return to analyzing the Jacobian matrix of the F-map \eqref{Fmap}, in the case where $\O_j\neq 0$, $\sum_{j=1}^N\O_j=0$, and $\k \geq \k^*$.  We see that the Jacobian matrix at the fixed point $z_{jk}=e^{i(\a_j-\a_k)}$ has main diagonal,
\begin{align}
    d_{n,n}&=-\frac{i(\O_j-\O_k)}{\k}+\frac{\l}{2}\left(e^{-i\a_k}+e^{i\a_j})\right)-\frac{1}{N}\left(1-e^{i(\a_j-\a_k)}\right),\\
    &=\frac{\l}{2}\left(e^{i\a_k}+e^{-i\a_j})\right)-\frac{1}{N}\left(1-e^{i(\a_j-\a_k)}\right),
\end{align}
where we utilized the characterization in Lemma \ref{l:phase-locked} to simplify the expression. Further, there are $2(N-2)$ nonzero off diagonal terms in each column given by
\begin{align}
    d_{n,m}=-\frac{1}{2N}\left(1-e^{i(\a_j-\a_k)}\right),
\end{align}
where the indices $j,k$ agree with the indices $j,k$ of the diagonal element in that column. Again, many terms will be zero as if $j\neq i$ and $k\neq l$, then  $d_{n,m}=\partial_{z_{jk}} F_{il}(\bz)=0$.

We first add and subtract $1$ on the diagonal to rewrite the Jacobian as
\[
D_{\bz}F(\bz)=I+E,
\]
where the elements of $E$ are given as
\begin{align}
    e_{n,n}=-1+\frac{\l}{2}\left(e^{i\a_k}+e^{-i\a_j})\right)-\frac{1}{N}\left(1-e^{i(\a_j-\a_k)}\right)
\end{align}
and $e_{n,m}=d_{n,m}$ are unchanged. Indeed, as $\k\to \infty$, or $\e\to 0$, we have seen in Section \ref{AV} that $\l\to 1$, and $\a_j\to 0$. In this way every element of $E$ is small and approaching $0$ as the coupling strength $\k$ increases. Therefore the Jacobian can be viewed as a perturbation of the identity matrix. Due to the continuity of eigenvalues with respect to the elements of the matrix, we know that for some $\k$ large ($\e$ small) all eigenvalues will retain a positive real part, guaranteeing stability of the system.\\

Of interesting note is that the choice of using the Identity matrix was not required. Indeed, we can extract more information about the eigenvalues by making a different choice. Adding and subtracting $e^{i(\a_j-\a_k)}$ along the diagonal lets us reformulate the problem into a different matrix perturbation.
\[
D_{\bz}F(\bz)=A+E^*=\begin{pmatrix}
    \ddots & & \\
    &  e^{i(\a_j-\a_k)} & \\
    & &  \ddots\\
\end{pmatrix}+E^*,
\]
where $E^*$ has diagonal terms
\begin{align}
    e^*_{n,n}=-e^{i(\a_j-\a_k)}+\frac{\l}{2}\left(e^{i\a_k}+e^{-i\a_j}\right)-\frac{1}{N}\left(1-e^{i(\a_j-\a_k)}\right),
\end{align}
and nonzero off diagonal terms,
\begin{align}
    e^*_{n,m}=d_{n,n},
\end{align}
and zeros in the same places as $D_{\bz}F(\bz)$.

Indeed, the case $\O_j\equiv 0$, returns $e^{i(\a_j-\a_k)}=1$ and thus $A=I$ and $E^*=0$, that is simply to say that when $\k$ is large, the matrix $E^*$ is small and the Jacobian can be viewed as a perturbation of the matrix $A=\mathrm{diag}\{e^{i(\a_j-\a_k)}\}$. As we know the eigenvalues of $A$ are given by its diagonal elements $\mu_{jk}=e^{i(\a_j-\a_k)}$, and that each $\a_j$, for $j=1,...,N$ are determined by the parameters $\O_j$ and $\k$, in configurations where the following condition holds we can compute the perturbation of the eigenvalues directly.
\begin{assumption}\label{assumption}
Suppose for any $j_1,j_2$ and $k_1,k_2$, that
\begin{align}
    \O_{j_1}-\O_{k_1}\neq \O_{j_2}-\O_{k_2}
\end{align}
\end{assumption}
This extra assumption precludes having $\O_j=\O_k$, however it guarantees that all of the asymptotic values $e^{i(\a_j-\a_k)}$ will be distinct. Therefore the eigenvalues of $A$ are simple and the perturbed eigenvalues are given by
\begin{align}\label{eigpert}
    \tilde{\mu}_{jk}&=\mu_{jk}(A)+\frac{v_{jk}^TE^*v_{jk}}{v_{jk}^Tv_{jk}}+O(\|E^*\|^2),\\
    &=\frac{\l}{2}\left(e^{i\a_k}+e^{-i\a_j}\right)-\frac{1}{N}\left(1-e^{i(\a_j-\a_k)}\right)+O(\|E^*\|^2).
\end{align}
where $v_{jk}$ is the eigenvector of $A$ corresponding to $\mu_{jk}$. As $A$ is diagonal, notice therefore that $\frac{v_{jk}^TE^*v_{jk}}{v_{jk}^Tv_{jk}}$ simply picks out the diagonal element  of $E^*$. Now the real part will tell us the stability of the fixed point.
\begin{align}
    \Re(\tilde{\mu}_{jk})=\frac{\l}{2}(\cos(\a_k)+\cos(\a_j))-\frac{1}{N}(1-\cos(\a_j-\a_k))+\Re(O(\|E^*\|^2)).
\end{align}

If $\k\geq 2\sqrt{2}\max_{j}|\O_j|$, then Corollary \ref{l:rbound3} guarantees $\cos(\a_j-\a_k)\geq \frac{\sqrt{2}}{2}$ for all $j,k$ and hence $\a_j\in [-\frac{\pi}{4},\frac{\pi}{4}]$, for all $j=1,...,N$, then automatically the first order eigenvalue perturbation yields stability of the fixed point. Indeed, if $\cos(\a_j-\a_k)\geq \frac{\sqrt{2}}{2}$, then $\cos(\a_j)\geq \frac{\sqrt{2}}{2}$ and hence $\l\geq \frac{\sqrt{2}}{2}$, as well. Therefore,
\begin{align}
    \frac{\l}{2}(\cos(\a_k)+\cos(\a_j))-\frac{1}{N}(1-\cos(\a_j-\a_k)) \geq \frac{\sqrt{2}}{4}\left(\frac{\sqrt{2}}{2}+\frac{\sqrt{2}}{2}\right)-\frac{1}{N}=\frac{1}{2}-\frac{1}{N}.
\end{align}
As the stability is already known for $N=2$, we can conclude the stability as well for the case $N\geq 3$. Further, the monotonicity of the asymptotic values with respect to $\k$ guarantee the stability will not be lost as $\k$ increases.\\

In fact, we can relax $\k$ further so long as at the very least, $\k$ is large enough so that $\a_j \in [-\frac{\pi}{4},\frac{\pi}{4}]$, which would give some value smaller than $2\sqrt{2}\max_j|\O_j|$, which could be computed for specific configurations. It appears as though we can relax $\k$ even further utilizing the computations in the previous Section \ref{AV}. However, it is clear that if $\a_j$ and $\a_k$ simultaneously approach $\pm\frac{\pi}{2}$, respectively, then the quantity
\begin{align}
    \frac{\l}{2}(\cos(\a_k)+\cos(\a_j))-\frac{1}{N}(1-\cos(\a_j-\a_k)) \to -\frac{2}{N}.
\end{align}
This appears to imply that there exist values $\k>0$ such that fixed points of the system \eqref{corr} exist, but none of them are stable. However, if $\k>\k^*$, the critical coupling value from Section \ref{AV}, then there is reason to believe the stability can be shown all the way to the value $\k^*$. Indeed, utilizing the configuration which maximizes the angle of opening $\a_1-\a_N$ at $\k=\k^*$ in Lemma \ref{k*bound}, we find exactly that
\begin{align}
    \frac{\l}{2}(\cos(\a_k)+\cos(\a_j))-\frac{1}{N}(1-\cos(\a_j-\a_k))=0,
\end{align}
Due to the monotonicity proved in Section \ref{AV}, for all $\k>\k^*$, this implies that the expression
\begin{align}
    \frac{\l}{2}(\cos(\a_k)+\cos(\a_j))-\frac{1}{N}(1-\cos(\a_j-\a_k))>0.
\end{align}
However, this configuration relied on the assumption that $\O_j=0$ for all $j\neq 1,N$, which contradicts the extra Assumption \ref{assumption} that provides simple eigenvalues for the matrix $A$. Thus due to the multiplicity of the eigenvalues we can not utilize \eqref{eigpert} for this configuration.

\end{document}